\newtheorem{theorem}{Theorem}
\newtheorem{lemma}[theorem]{Lemma}
\newtheorem{remark}[theorem]{Remark}
\newtheorem{schol}[theorem]{Scholium}
\newtheorem{corollary}[theorem]{Corollary}
\newtheorem{proposition}[theorem]{Proposition}
\newtheorem{example}[theorem]{Example}
\newtheorem{definition}[theorem]{Definition}
\numberwithin{equation}{section}
\newcommand{\tto}{\twoheadrightarrow}
\newcommand{\gggg}{{\mathrm{g}}}
\newcommand{\Hom}{{\mathrm{Hom}}}
\newcommand{\Nat}{\mathrm{Nat}}
\newcommand{\SNat}{\mathrm{SNat}}
\newcommand{\Rep}{{\mathrm{Rep}}}
\newcommand{\Ext}{{\mathrm{Ext}}}
\newcommand{\Fun}{{\mathrm{Func}}}
\newcommand{\Ev}{{\mathrm{Ev}}}
\newcommand{\add}{{\mathrm{add}}}
\newcommand{\eins}{\leavevmode\hbox{\small1\kern-3.8pt\normalsize1}}
\newcommand{\op}{{\mathrm{op}}}
\newcommand{\sop}{{\mathrm{sop}}}
\newcommand{\Ab}{\mathbf{Ab}}
\newcommand{\xx}{\mathbf{x}}
\newcommand{\Set}{\mathbf{Set}}
\newcommand{\vecc}{\mathbf{Vec}}
\newcommand{\Hpf}{\mathbf{Hpf}}
\newcommand{\Spec}{{\rm Spec} }
\newcommand{\minus}{\scalebox{0.9}{{\rm -}}}
\newcommand{\Alg}{\mathbf{Alg}}
\newcommand{\Grp}{\mathbf{Grp}}
\newcommand{\gid}{\mathsf{e}}
\newcommand{\bpi}{\mathbf{\Pi}}
\newcommand{\mH}{\mathsf{H}}
\newcommand{\mR}{\mathbb{R}}
\newcommand{\mC}{\mathbb{C}}
\newcommand{\mZ}{\mathbb{Z}}
\newcommand{\FF}{\mathbf{F}}
\newcommand{\cD}{\mathcal{D}}
\newcommand{\cC}{\mathcal{C}}
\newcommand{\cO}{\mathcal{O}}
\newcommand{\cA}{\mathcal{A}}
\newcommand{\cB}{\mathcal{B}}
\newcommand{\cZ}{\mathcal{Z}}
\newcommand{\cX}{\mathcal{X}}
\newcommand{\oa}{\bar{0}}
\newcommand{\ob}{\bar{1}}
\newcommand{\End}{{\rm End}}
\newcommand{\Aut}{{\rm Aut}}
\newcommand{\Id}{{\rm Id}}
\newcommand{\im}{{\rm Im}}
\newcommand{\charr}{{\rm char}}
\newcommand{\mG}{\mathbb{G}}
\newcommand{\mT}{\mathbb{T}}
\newcommand{\mk}{\Bbbk}
\begin{document}
%%%%%%%%%%%%%%%%%%%%%%%%%%%%%%%%%%%%%%%%%%%%%%%%%%%%%%%%

\title[The $G$-centre]{The $G$-centre and gradable derived equivalences}

\author{Kevin Coulembier and Volodymyr Mazorchuk}
\date{}

\begin{abstract}
We propose a generalisation for the notion of the centre of an algebra
in the setup of algebras graded by an arbitrary abelian group $G$.
 Our generalisation, which we call the
$G$-centre, is designed to control the endomorphism category of the
grading shift functors. We show that the $G$-centre is preserved
by gradable derived equivalences given by tilting modules. We also discuss 
links with existing notions in superalgebra theory and apply our results 
to derived equivalences of superalgebras.
\end{abstract}

\maketitle

\noindent
\textbf{MSC 2010 : }  16B50, 16D90, 18E30

\noindent
\textbf{Keywords :} group actions, gradings, derived equivalences, generalisations of centres, superalgebras 
\vspace{5mm}

\section{Introduction}
Consider a finite dimensional algebra $A$ over a field $\mk$ and the corresponding category
$A$-mod of finite dimensional left $A$-modules. In this setup, the evaluation of a natural 
endomorphism of the identity functor $\Id$ on $A$-mod at the left regular $A$-module ${}_AA$ 
gives rise to the classical isomorphism
\begin{equation}\label{CentreId}
\cZ(A)\;\cong\; \End(\Id),
\end{equation}  
between the centre of an algebra and the centre of its module category. In \cite[Proposition~9.2]{Derived1}, 
Rickard proved that two derived equivalent algebras have isomorphic centres, providing a fundamental 
invariant for the study of derived equivalences.
When the algebras in question are graded by some group and the derived equivalence suitably 
preserves this grading, it is easy to show that the centres are isomorphic even as graded algebras. 
In this paper we take a slightly different view at this situation and introduce a new larger algebra
that  extends the classical centre of an algebra which we show is 
preserved by so-called `gradable derived equivalences' between graded algebras which are given by tilting modules.

A motivating example is given by the theory of superalgebras. When associative $\mZ_2$-graded algebras 
are interpreted as `superalgebras', there is an alternative notion of the centre, known as {\em super centre}. 
Furthermore, in \cite{Gorelik}, Gorelik introduced the notion of the {\em ghost centre} of a superalgebra.
This ghost centre  is a certain subalgebra containing both the centre and the super centre which turned out to play
a very important role in studying representations of Lie superalgebras. The natural questions which originated
the present study are whether the super centre and the ghost centre could be realised as natural transformations 
for some endofunctors on the module category and whether 
these subalgebras are preserved under (certain) derived equivalences.

We start our investigation in a different setting, namely, that of an algebra $A$ on which an arbitrary
group $H$ acts by automorphisms. This allows us to define the {\em extended centre}, which is not a 
subalgebra of~$A$, but, rather, a subalgebra of  $A\otimes\mk H$. The group action on $A$ leads to a strict categorical 
action of~$H$ on the (derived) module category of~$A$. We show that the extended centre can be 
realised as the algebra of natural transformation of the functors which yield this strict categorical action. 
Furthermore, we prove that certain derived equivalences which intertwine the actions in a suitable way 
preserve the extended centres of involved algebras.

If the algebra $A$ is graded by an abelian group $G$, the grading can be reformulated in terms of 
an action of the character group $\hat{G}=\Hom(G,\mk^\times)$, with respect to the ground field $\mk$.
When $|G|$ is finite and not divisible by $\charr(\mk)$, the notions of $\hat{G}$-actions and $G$-gradings
are actually equivalent.

For a grading on $A$ by an arbitrary abelian group $G$, we introduce the $G$-centre, which is a subalgebra
of the algebra of functions from $G$ to~$A$. When $|G|$ is finite and not divisible by $\charr(\mk)$, we show
that the $G$-centre is isomorphic to the extended centre, corresponding to the $\hat{G}$-action. In general
the two notions differ. We show how the $G$-centre can be realised as the algebra 
of natural transformations of certain functors on the category of graded modules. Then we prove that 
the $G$-centre is preserved under  `gradable derived equivalences', as introduced in~\cite{CM4}, 
provided that the equivalence is given in terms of a tilting module.

While our current methods do not allow to consider derived equivalences in full generality, we hope that 
the condition that the derived equivalence be given by a tilting module can be lifted using a different
approach. On the other hand, the results in \cite{CM4} show for example that, for any two blocks of category $\cO$
in type A which are gradable derived equivalent (for the Koszul $\mZ$-grading), one can construct a 
gradable derived equivalence between them which is given by a tilting module.

Then, we return to the special case of~$G=\mZ_2$, thus of that of superalgebras. Our notion 
of~$G$-centre is very closely related to the ghost centre. Concretely, it is isomorphic to an 
exterior direct sum of the super centre and the anti centre, whereas the ghost centre is the 
sum (not necessarily direct) of the super centre and the anti centre inside the algebra~$A$. 
The two notions are thus only different in case some non-zero elements of 
$A$ belong to the super and anti centre at the same time, so we can view the
$G$-centre as a natural lift of the ghost centre. Our general 
results then yield concrete methods to realise the super centre (and the $G$-centre) as 
endomorphism algebras of certain functors on the supermodule category of a superalgebra.
Furthermore, our results show that the super centre and the $G$-centre are both preserved 
under the most canonical definition of derived equivalences between superalgebras. This 
provides an answer to both our original motivating questions.

The paper is organised as follows. In Section \ref{NotConv} we fix some notation and conventions. In Section~\ref{GroupAc} we study actions of finite groups on algebras, modules and categories. 
In Section~\ref{SecExtCen}, we obtain our results on the extended centre.
In Section~\ref{Gradings} we establish some elementary properties of~$G$-gradings. 
In Section~\ref{SecGcentre}, we obtain our results on the $G$-centre. In Section~\ref{SecSuper}, 
we apply our results to superalgebras and compare with some existing notions in the literature. 
In Section~\ref{SecHoch} we point out some natural questions for future research, related to 
Hochschild cohomology. In Appendix~\ref{AppGroup}, we give details on two technical proofs of  
statements in Section~\ref{GroupAc} related to strict categorical group actions. In Appendix~\ref{AppB}
we show that some properties of tilting modules that we apply will fail when considering
general tilting complexes.

\section{Notation and conventions}\label{NotConv}

We fix an algebraically closed field $\mk$.  We denote by $\Set$ the category of sets and by $\Ab$ the category of abelian groups. The category of~$\mk$-vector spaces is denoted by $\vecc_{\mk}$. The category of associative unital $\mk$-algebras is denoted by $\Alg$. By `algebra' we will mean an object in $\Alg$. All unspecified categories and functors are 
assumed to be $\mk$-linear and additive. The category of~$\mk$-linear additive functors on 
a $\mk$-linear additive category $\cC$ is denoted by $\Fun(\cC)$.

Consider categories $\cA,\cB,\cC$ and~$\cD$; functors $F: \cA\to\cB$,~$H:\cC\to\cD$ and 
functors $G_1,G_2:\cB\to\cC$ with a natural transformation~$\eta:G_1\Rightarrow G_2$. We will 
use the natural transformation~$H(\eta): H\circ G_1\Rightarrow H\circ G_2$, where $H(\eta)_X:=H(\eta_X)$, 
for any object $X$ in $\cB$. The natural transformation~$\eta_F: G_1\circ F\Rightarrow G_2\circ F$ 
is given by $(\eta_F) _Y:= \eta_{F(Y)}$, for any object $Y$ in $\cA$.
For an exact functor $F$ between two abelian categories $\cA$ and~$\cB$, we will use the 
 notation~$F_\bullet$ for the corresponding triangulated functor $\cD^b(\cA)\to \cD^b(\cB)$ 
acting between the corresponding  bounded derived categories. %We do the same for a natural transformation 
%between two exact functors between abelian categories.

The multiplicative 
identity of an algebra $A\in\Alg$ will be denoted by~$1_A$, or $1$ if there is no confusion possible. 
We denote the group of $\mk$-algebra automorphisms of~$A$ by $\Aut(A)$. If the algebra $A$ is finite dimensional, we denote by $A$-mod the category of 
finite dimensional left $A$-modules. %We also write 
%\begin{displaymath}
 %\cE_A:=\mathrm{Func} (A\text{-}\mathrm{mod}).
%\end{displaymath}

We will abbreviate $\cD^b(\mbox{{\rm A-mod}})$ to~$\cD^b(A)$. We will say that a triangulated equivalence $F:\cD^b(A)\stackrel{\sim}{\to}\cD^b(B)$ is {\em strong} if both $F({}_AA)$ and $F^{-1}({}_BB)$ are quasi-isomorphic to complexes contained in one degree. The corresponding modules are then {\em tilting modules}, see Appendix~\ref{AppB}.

For an arbitrary group $H$, we denote its identity element by $\gid=\gid_H$. The category of $\mk$-linear representations of $H$ will be denoted by $\Rep_{\mk}H$. Its objects are thus pairs $(V,\psi)$, with $V\in \vecc_{\mk}$ and $\psi$ a group homomorphism 
$$\psi:H\to\Aut_{\mk}(V),\quad h\mapsto \psi_h.$$
In this way, we have $\psi_h\circ\psi_k=\psi_{hk}$, for arbitrary $h,k\in H$, and $\psi_{\gid}=1_V$.

We denote the group (Hopf)
algebra of $H$ by $\mk H$.  For $A\in\Alg$, we consider $\Hom_{\mk}(\mk H,A)=\Hom_{\Set}(H,A)$ as an algebra with pointwise multiplication. In particular, we write $\mk^H=\Hom_{\Set}(H,\mk)$.

%The latter has a basis given by $\{\partial_h\,|\,h\in H\}$, which are defined by $\partial_h(g)=\delta_{hg}$, for all $g\in H$. 
%Note that both algebras are actually naturally Hopf algebras. When $H$ is finite, these are each other's dual. In general $\mk^H$ is much bigger than~$\mk H$.

%we consider the vector space $A^{\oplus H}$, which is just the direct 
%sum of~$| H |$ copies of the vector space $A$. However, we keep track of the $|H|$ copies of~$A$, 
%by labelling them using elements of~$H$. The vector space $A^{\oplus H}$ is then naturally 
%spanned by pairs $(a,h)$ with $a\in A$ and~$h\in H$. General elements can be represented by 
%$H$-tuples of elements of~$A$, {\it viz.} $(x^{(h)}|h\in H)=\sum_{h\in H}(x^{(h)},h)$ with 
%$x^{(h)}\in A$. We will consider two different %(finite dimensional unital associative) 
%algebra structures on the underlying vector space $A^{\oplus H}$.

%For the first algebra structure, we use the notation~$A^{\oplus H}$, as it is just the direct 
%sum of~$| H |$ copies of the algebra $A$. Hence, multiplication on~$A^{\oplus H}$ is given by 
%$$(a,h)(a', h')=(aa',h)\delta_{h,h'}.$$ 
%Let $\varepsilon$ be the identity element in $H$. The `counit' morphism $A^{\oplus H}\tto A$ 
%is given by $(a,h)\mapsto a\delta_{h,\varepsilon}$.

%The second algebra is $A\otimes\mk H:= A\otimes_{\mk}\mk H$, where $\mk H$ is the group algebra of 
%$H$ over~$\mk$. By definition, multiplication in $A\otimes\mk H$ is given by 
%$$(a,h)(a', h')=(aa',hh').$$ The `forgetful' algebra morphism $A\otimes\mk H\tto A$ is 
%given by $(a,h)\mapsto a$. 

\section{Group actions}\label{GroupAc}

In this section we introduce some notions related to strict categorical actions of groups. 
Technical proofs of Propositions~\ref{InterInv} and~\ref{PropCDEq} are given in Appendix~\ref{AppGroup}. We fix a group $H$. 

\subsection{Group actions on algebras and modules}

\subsubsection{Compatible actions}\label{ActAlg}
An action of~$H$ on an 
algebra~$A$ is 
defined to be a group homomorphism $\phi:H\to \Aut(A)$,~$f\mapsto \phi_f$. In other words, $(A,\phi)\in \Rep_{\mk}H$ and the image of $\phi$ consists of algebra automorphisms. We can and will identify 
$H$-actions on~$A$ and~$A^{\op}$.
 Although not essential for this paper, we note that an action of~$H$ on $A$ as defined above is
equivalent to the notion of a Hopf $\mk H$-module algebra structure on $A$.

Assume we have $(V,\psi)\in \Rep_{\mk}H$ such that $V$ is, additionally, an $A$-module. The actions of 
$H$ on~$A$ and~$V$ are said to be {\em compatible}
 if $\psi_h(av)=\phi_h(a)\psi_h(v)$, for all $h\in H$,~$a\in A$ and~$v\in V$.

For any $\alpha\in \Aut(A)$ and any $A$-module $M$ with underlying vector space $V$, we 
denote by ${}^\alpha M$ the $A$-module with underlying vector space $V$, but with the 
action of~$a\in A$ on~$v\in V$ given by $\alpha(a)\cdot v$. The above notion of 
compatibility is thus equivalent to~$\psi_h\in\Hom_A(M, {}^{\phi_h}M)$.

%An $H$-action $\kappa$ on $A$ is {\em free} if there exists a set of mutually 
%orthogonal idempotents $\{e_h\,|\,h\in H\}$ in $A$ for which
%$$\kappa_h(e_k)=e_{hk},\;\,\mbox{for }h,k\in H,\qquad\mbox{and}\qquad 1_A=\sum_{h\in H}e_h.$$
%For an algebra $A$ with free $H$-action, we define the algebra $A^H$ as the 
%subalgebra of~$H$-invariants.
%Note that $1_A\in A^H$.

%\begin{lemma}\label{BeG}
%Consider an algebra $A$ with free $H$-action~$\kappa$ and the 
%corresponding set $\{e_h\,|\,h\in H\}$ of mutually orthogonal idempotents.
%Then the left ideal $Ae_{\gid}$ of~$A$, when equipped with 
%multiplication~$m: Ae_{\gid}\times Ae_{\gid}\to Ae_{\gid}$ given by 
%$$m(x,y)\;=\;\sum_{h\in H}\kappa_h (x)y,$$
%forms an algebra isomorphic to~$A^H$.
%\end{lemma}

%\begin{proof}
%We have mutually inverse isomorphisms of vector spaces given by
%$$Ae_{\gid}\to A^H;\quad x\mapsto \sum_{g\in H}\kappa_g(x)
%\qquad\mbox{and}\qquad A^H\to Ae_{\gid};\quad b\mapsto be_{\gid}.$$
%It follows easily that these intertwine the involved multiplications.
%\end{proof}

\subsubsection{The Hopf smash products}\label{AphiH}
For a group action~$\phi:H\to\Aut(A)$, we have the Hopf smash product $A\#  \mk H=A\# H$. As a vector space, this is $A\otimes \mk H$ with multiplication
$$(a,h)(b,k)=(a\phi_h(b),hk).$$
We will also use $A^{\op}\# H$, which has multiplication
$$(a,h)(b,k)=(\phi_h(b)a,hk).$$

\subsection{Group actions on categories}

\subsubsection{Strict categorical actions}\label{StrictSec}

Let~$\Gamma$ be a strict categorical action of $H$ on a category~$\cC$, 
{\it i.e.} we have $\mk$-linear endofunctors $\Gamma_h$ on~$\cC$, for each $h\in H$, with $\Gamma_{\gid}=\Id$
and~$\Gamma_{h_1}\circ \Gamma_{h_2}\,=\, \Gamma_{h_1h_2}$. 

For any object $X$ in $\cC$, we introduce the $\mk$-vector space
$$\End(\Gamma;X)\;:=\;\bigoplus_{h\in H}\Hom_{\cC}(X,\Gamma_hX).$$ 
This space has the structure of an algebra given by
$$\Hom_{\cC}(X,\Gamma_hX)\otimes \Hom_{\cC}(X,\Gamma_kX)\to \Hom_{\cC}(X,\Gamma_{kh}X),\quad \alpha\otimes\beta\mapsto \Gamma_k(\alpha)\circ\beta.$$

%This algebra has a free $H$-action~$\kappa$, where $e_h$ is the identity in $\Hom_{\cC}(\Gamma_hX,\Gamma_hX)$ and $\kappa_g(\phi):=\Gamma_g(\phi)$, for any $\phi\in \Hom_{\cC}(\Gamma_hX,\Gamma_kX)$,
%extending it to the whole of~$\End(\Gamma; X)$ by linearity.

%Let $\mathrm{Func}(\cC)$ be the category of~$\mk$-linear endofunctors of~$\cC$. 
In a similar fashion, we can consider the algebra 
$$\End(\Gamma)\;:=\;\bigoplus_{h\in H}\Nat(\Id,\Gamma_h).$$
%with $\kappa_h(\eta)$, for a natural transformation~$\eta\in \Hom_{\mathrm{Func} (\cC)}(\Gamma_h,\Gamma_k)$, 
%given by~$\Gamma_h(\eta)$ and then extended by linearity. Directly from the definitions we have:
The following statement follows directly from the definitions.

\begin{lemma}\label{LemEva}
For any object $X$ in $\cC$, evaluation yields an algebra morphism
$$\Ev^\Gamma_X:\;\End(\Gamma)\;\to\; \End(\Gamma; X);\qquad \eta\mapsto \eta_X.$$
%which intertwines the above free $H$-actions.
\end{lemma}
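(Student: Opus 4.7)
The plan is to check the two requirements for $\Ev^\Gamma_X$ to be an algebra morphism: $\mk$-linearity, which is immediate from the formula $\eta\mapsto\eta_X$ applied component-wise, together with preservation of multiplication and unit. Before doing any of this, I would spell out the algebra structure on $\End(\Gamma)$ that the statement leaves implicit: by analogy with the formula given for $\End(\Gamma;X)$, the product of $\eta\in\Nat(\Id,\Gamma_h)$ and $\theta\in\Nat(\Id,\Gamma_k)$ should be defined by
$$(\eta\cdot\theta)_Y \;:=\; \Gamma_k(\eta_Y)\circ\theta_Y,$$
which lands in $\Hom_{\cC}(Y,\Gamma_{kh}Y)$ because $\Gamma_k\circ\Gamma_h=\Gamma_{kh}$.

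The first real step is to verify that $\eta\cdot\theta$ so defined is indeed a natural transformation $\Id\Rightarrow\Gamma_{kh}$. For $f:Y\to Z$, one applies naturality of $\theta$ to rewrite $\theta_Z\circ f=\Gamma_k(f)\circ\theta_Y$, then uses functoriality of $\Gamma_k$ and naturality of $\eta$ to pull $f$ past $\eta_Z$, and finally invokes $\Gamma_k\Gamma_h=\Gamma_{kh}$. The resulting identity $(\eta\cdot\theta)_Z\circ f=\Gamma_{kh}(f)\circ(\eta\cdot\theta)_Y$ is exactly naturality. Associativity of this product, and the fact that the identity natural transformation in $\Nat(\Id,\Gamma_{\gid})=\Nat(\Id,\Id)$ is a two-sided unit, are similarly routine consequences of $\Gamma$ being a strict action.

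With the algebra structure on $\End(\Gamma)$ in place, multiplicativity of $\Ev^\Gamma_X$ is then immediate by instantiating the product at $Y=X$:
$$\Ev^\Gamma_X(\eta\cdot\theta) \;=\; (\eta\cdot\theta)_X \;=\; \Gamma_k(\eta_X)\circ\theta_X \;=\; \Gamma_k\bigl(\Ev^\Gamma_X(\eta)\bigr)\circ\Ev^\Gamma_X(\theta),$$
which is exactly the product $\Ev^\Gamma_X(\eta)\cdot\Ev^\Gamma_X(\theta)$ in $\End(\Gamma;X)$ as defined in the excerpt. Evaluation also clearly sends the unit of $\End(\Gamma)$ to $\id_X$, the unit of $\End(\Gamma;X)$, so the unit is preserved.

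There is no substantive obstacle: once the product on $\End(\Gamma)$ has been written down in the only reasonable way, the statement really does follow from the definitions, as the authors indicate. The only point requiring a moment of care is to keep the composition order straight, so that $\eta$ in degree $h$ and $\theta$ in degree $k$ produce a result in the $kh$-graded component on both sides of $\Ev^\Gamma_X$, consistent with the convention fixed for $\End(\Gamma;X)$.
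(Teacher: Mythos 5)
Your proof is correct and is exactly the routine verification the authors had in mind: the paper gives no written proof, stating only that the lemma ``follows directly from the definitions,'' and your identification of the product on $\End(\Gamma)$ as $(\eta\cdot\theta)_Y=\Gamma_k(\eta_Y)\circ\theta_Y$ is the intended one (the paper introduces $\End(\Gamma)$ ``in a similar fashion'' to $\End(\Gamma;X)$). The naturality check, the multiplicativity of evaluation, and the unit verification are all as they should be.
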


%Consequently, the morphism in the lemma restricts to an algebra morphism
%\begin{equation}\label{DefEv}
%\Ev_X^{\Gamma}:\; \,\End(\Gamma)^H\;\to\; \End(\Gamma; X)^H.
%\end{equation}
In some cases we will need a more refined evaluation.

\begin{definition}\label{DefEvPi}
{\rm
The {\em astute evaluation} is an algebra morphism,
$$\Delta\Ev^{\Gamma}_{X}:\; \,\End(\Gamma)\;\to\;  
\Hom_{\Set}\left(H, \End(\Gamma; X)\right),$$
which is given by
$$\Nat(\Id,\Gamma_h)\ni \eta\;\mapsto\; \{g\mapsto \Gamma_{g^{\minus 1}}(\eta_{\Gamma_gX})\,|\,g\in H\}.$$
}
\end{definition}

\subsubsection{Intertwining categorical group actions}\label{CatComm}

Let~$\Gamma$, resp. $\Upsilon$, be strict categorical actions of $H$ on a category~$\cC$, 
resp. $\cD$. We say that a $\mk$-linear {\em functor $K:\cC\to\cD$ 
intertwines the actions $\Gamma$ and~$\Upsilon$} if we have natural transformations
$$\xi^h: K\circ \Gamma_h\Rightarrow \Upsilon_h\circ K,\qquad\mbox{for all}\quad h\in H,$$
where $\xi^{\gid}=\Id_{K}$ and the relation
\begin{equation}\label{eqCommFun}\Upsilon_k(\xi^h)\,\circ\,\xi^k_{\Gamma_h} \;=\; \xi^{kh}
\end{equation}
is satisfied, for all $h,k\in H$. The condition in Equation~\eqref{eqCommFun} is equivalent to saying that the diagram
\begin{equation}\label{eqCommFun2}
\xymatrix{
K\circ \Gamma_{kh}=K\circ \Gamma_k\circ \Gamma_h\ar[rr]^{\xi^k_{\Gamma_h}}\ar[drr]^{\xi^{kh}}&& 
\Upsilon_k\circ K\circ \Gamma_h\ar[d]^{\Upsilon_k(\xi^h)}\\
&   &\Upsilon_{kh}\circ K=\Upsilon_k\circ \Upsilon_h\circ K}
\end{equation}
commutes, for all $h,k\in H$.
The above conditions imply, in particular, that, for any object $X$ in $\cC$ and any $h\in H$, 
the morphism $\xi^h_{\Gamma_{h^{\minus 1}} X}$ is invertible, with inverse $\Upsilon_h(\xi^{h^{\minus 1}})$. 
As the functor $\Gamma_{h^{\minus 1}}$ has inverse $\Gamma_h$, this implies that the natural 
transformation~$\xi^h$ is an isomorphism of functors. 

In the particular case where one has the equality $K\circ \Gamma_h=\Upsilon_h\circ K$, 
for all $h\in H$, 
we can take all $\xi^h$ to be the identity natural transformations and the condition in 
equation~\eqref{eqCommFun} is automatically satisfied.

\begin{proposition}\label{InterInv}
Assume that the functor $K$ which intertwines the actions $\Gamma$ and~$\Upsilon$ as above, 
has a (weak) inverse $K^{\minus 1}$ given by isomorphisms $\alpha : K^{\minus 1}\circ K\Rightarrow \Id$ 
and~$\beta:\Id \Rightarrow K\circ K^{\minus 1}$ 
making  $(K,K^{\minus 1})$ a pair of adjoint functors. Then we introduce the natural transformations
$$\eta^h:\;\;K^{\minus 1}\circ\Upsilon_h\Rightarrow \Gamma_h\circ K^{\minus 1}$$
defined as
$$\eta^h=\alpha_{\Gamma_h\circ K^{\minus 1}}\circ K^{\minus 1}((\xi^h)^{\minus 1})_{K^{\minus 1}}
\circ (K^{\minus 1}\circ\Upsilon_h)(\beta).$$
This corresponds to the composition
$$K^{\minus 1}\circ\Upsilon_h\Rightarrow K^{\minus 1}\circ\Upsilon_h \circ K\circ K^{\minus 1}\Rightarrow 
K^{\minus 1} \circ K\circ\Gamma_h\circ K^{\minus 1}\Rightarrow \Gamma_h\circ K^{\minus 1}.$$
With this definition, the $\{\eta^h\}$ satisfy the intertwining relations \eqref{eqCommFun} for~$K^{\minus 1}$.
\end{proposition}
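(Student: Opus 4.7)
The plan is to verify the intertwining relation $\Gamma_k(\eta^h)\circ \eta^k_{\Upsilon_h}=\eta^{kh}$ for all $k,h\in H$ by a diagram chase, using three ingredients: (i) the zig-zag (triangle) identities for the adjunction $(K,K^{\minus 1})$ with unit $\beta$ and counit $\alpha$; (ii) naturality of $\alpha$ and $\beta$; (iii) the intertwining relation $\Upsilon_k(\xi^h)\circ \xi^k_{\Gamma_h}=\xi^{kh}$, equivalently its inverted form $(\xi^k_{\Gamma_h})^{\minus 1}\circ \Upsilon_k((\xi^h)^{\minus 1})=(\xi^{kh})^{\minus 1}$. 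The case $h=\gid$ serves as a sanity check: since $\xi^{\gid}=\Id_K$, the middle factor in $\eta^{\gid}$ collapses and the two remaining factors become the zig-zag identity $\alpha_{K^{\minus 1}}\circ (K^{\minus 1})(\beta)=\Id_{K^{\minus 1}}$, so $\eta^{\gid}=\Id_{K^{\minus 1}}$.

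First I would rewrite $\eta^h$ in the more symbolic form $\eta^h=\alpha\bullet K^{\minus 1}((\xi^h)^{\minus 1})\bullet \beta$, where the horizontal whiskerings are understood from the definition, and similarly expand $\Gamma_k(\eta^h)\circ \eta^k_{\Upsilon_h}$ as a vertical composition of six natural transformations. I would then slide the counit $\alpha$ past the units $\beta$ that appear in the middle using the triangle identity $K\circ \alpha\circ \beta_K=\Id_K$ (applied to a suitable object), which collapses the adjacent $\alpha$–$\beta$ pair and leaves the two outer factors $(K^{\minus 1}\circ\Upsilon_{kh})(\beta)$ and $\alpha_{\Gamma_{kh}\circ K^{\minus 1}}$ intact.

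The resulting middle factor is the composition
\[
K^{\minus 1}\bigl((\xi^k_{\Gamma_h})^{\minus 1}\bigr)\circ K^{\minus 1}\bigl(\Upsilon_k((\xi^h)^{\minus 1})\bigr)
\;=\; K^{\minus 1}\!\left(\Upsilon_k((\xi^h)^{\minus 1})\circ (\xi^k_{\Gamma_h})^{\minus 1}\right),
\]
where I have used naturality of $\alpha$ to commute $\alpha_{\Gamma_k\circ K^{\minus 1}}$ past $K^{\minus 1}\circ K\circ\Upsilon_k((\xi^h)^{\minus 1})$ (turning the latter into $\Upsilon_k((\xi^h)^{\minus 1})$ after the counit is applied), and similarly naturality of $\beta$ to line up the inner whiskerings. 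By the inverted intertwining relation from \eqref{eqCommFun} applied to $\xi$, the bracketed composition equals $(\xi^{kh})^{\minus 1}$, and therefore the whole middle factor becomes $K^{\minus 1}((\xi^{kh})^{\minus 1})$, appropriately whiskered, which is precisely the middle factor of $\eta^{kh}$.

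The main obstacle is purely notational: keeping track of which functors each natural transformation is whiskered by in a six-fold vertical composition, since there are several places where one must insert or remove a $K\circ K^{\minus 1}$ using $\beta$ in order to apply naturality before the $\xi$ relation can be used. Once the bookkeeping is organised (for instance by working pointwise at an object $X$ and drawing the full hexagonal diagram, whose cells are either naturality squares, triangle identities for the adjunction, or the pentagon \eqref{eqCommFun2} for $\xi^{kh}$), the identity $\Gamma_k(\eta^h)\circ \eta^k_{\Upsilon_h}=\eta^{kh}$ falls out, and together with $\eta^{\gid}=\Id_{K^{\minus 1}}$ this is exactly the condition that $K^{\minus 1}$ intertwines $\Upsilon$ and $\Gamma$ via $\{\eta^h\}$.
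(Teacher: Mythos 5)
Your proposal is correct and follows essentially the same route as the paper: the paper's proof is precisely the large commutative diagram of Figure~\ref{fig1}, whose cells are the naturality/interchange squares, the triangle identity $K(\alpha)\circ\beta_K=1_K$ collapsing the inserted $K\circ K^{\minus 1}$, and the intertwining triangle \eqref{eqCommFun2} for $\xi$, exactly the three ingredients you chase through. The only point to tidy in the write-up is the order of the two factors inside $K^{\minus 1}(\cdot)$ in your displayed equation (functoriality gives $K^{\minus 1}\bigl((\xi^k_{\Gamma_h})^{\minus 1}\circ\Upsilon_k((\xi^h)^{\minus 1})\bigr)$, which is indeed $K^{\minus 1}((\xi^{kh})^{\minus 1})$), but this is cosmetic and does not affect the argument.
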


For the proof of Proposition~\ref{InterInv}, see Appendix~\ref{AppGroup}.

When $\cC=\cD$ and~$\Gamma=\Upsilon$, we simply say that $K$ {\em commutes with the categorical $H$-action~$\Gamma$}.

\subsubsection{Categorical actions and equivalences}
Consider an equivalence $F:\cC\;\tilde\to\;\cD$ of categories. This induces an equivalence of categories
$$\FF\;:\;\Fun(\cC)\;\tilde\to\;\Fun(\cD),$$
where $\FF(K):=F\circ K\circ F^{\minus 1}$, for a functor $K$ (an object in $\Fun(\cC)$), and~$\FF(\eta)=F(\eta)_{F^{\minus 1}}$, for a natural transformation~$\eta$ (a morphism in $\Fun(\cC)$). We point out that the equivalence $\FF$ does not necessarily respect composition of functors
(it only does it up to isomorphism). In particular, one cannot expect $\FF$ to map a set of functors forming a {\em strict} group action to a set of functors with the same property. In the following we will continue to refer to objects in $\Fun(\cC)$ simply as `functors' and morphisms in $\Fun(\cC)$ as natural transformations.

\begin{proposition}\label{PropCDEq}
Consider an equivalence $F:\cC\;\tilde\to\;\cD$ which intertwines strict $H$-actions $\Gamma$ on~$\cC$ and~$\Upsilon$ on~$\cD$. Then there is an algebra isomorphism
$$\End(\Gamma)\;\stackrel{\sim}{\to}\;\End(\Upsilon),$$
%which intertwines the $H$-action on both algebras as introduced in \ref{StrictSec}.
\end{proposition}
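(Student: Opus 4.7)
The plan is to construct an explicit algebra map $\Phi\colon \End(\Gamma)\to \End(\Upsilon)$ by conjugating a natural transformation by $F$ and then converting the resulting natural transformation between conjugate functors into one between $\Id_\cD$ and $\Upsilon_h$ using the intertwiner $\xi^h$ and an isomorphism $\beta\colon \Id_\cD\Rightarrow F\circ F^{\minus 1}$ coming from the equivalence. Concretely, on a homogeneous component $\eta\in \Nat(\Id_\cC,\Gamma_h)$ I set
$$
\Phi(\eta)\;:=\;\Upsilon_h(\beta^{\minus 1})\,\circ\,\xi^h_{F^{\minus 1}}\,\circ\,F(\eta)_{F^{\minus 1}}\,\circ\,\beta,
$$
which belongs to $\Nat(\Id_\cD,\Upsilon_h)$; extending $\mk$-linearly and summing over $h\in H$ gives $\Phi$ on all of $\End(\Gamma)$.

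The main step is to verify that $\Phi$ respects the algebra structure. Given $\eta\in \Nat(\Id_\cC,\Gamma_h)$ and $\mu\in\Nat(\Id_\cC,\Gamma_k)$, the product is $\eta\cdot\mu=\Gamma_k(\eta)\circ\mu\in\Nat(\Id_\cC,\Gamma_{kh})$. Unfolding both $\Phi(\eta\cdot\mu)$ and $\Phi(\eta)\cdot\Phi(\mu)=\Upsilon_k(\Phi(\eta))\circ\Phi(\mu)$, the equality is extracted from three ingredients: (i) the cocycle condition \eqref{eqCommFun} in the form $\xi^{kh}_{F^{\minus 1}}=\Upsilon_k(\xi^h_{F^{\minus 1}})\circ \xi^k_{\Gamma_hF^{\minus 1}}$, which matches the leftmost factor of $\Phi(\eta\cdot\mu)$ with the corresponding factor of the product; (ii) naturality of $\xi^k$ applied to the morphism $\eta_{F^{\minus 1}}$, giving $\xi^k_{\Gamma_hF^{\minus 1}}\circ F(\Gamma_k(\eta))_{F^{\minus 1}}=\Upsilon_k(F(\eta)_{F^{\minus 1}})\circ \xi^k_{F^{\minus 1}}$, which is exactly what is needed to slide the $\xi^k$-factor past the $\eta$-part; and (iii) the cancellation of a $\Upsilon_k(\beta)\circ\Upsilon_k(\beta^{\minus 1})$ that appears in the middle after applying $\Upsilon_k$ to $\Phi(\eta)$. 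These assemble to the desired identity.

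For invertibility, I apply the same construction with $F$ replaced by $F^{\minus 1}$: Proposition~\ref{InterInv} provides the intertwining isomorphisms $\eta^h\colon F^{\minus 1}\circ\Upsilon_h\Rightarrow \Gamma_h\circ F^{\minus 1}$ satisfying the cocycle condition, so the analogous formula produces an algebra map $\Psi\colon \End(\Upsilon)\to \End(\Gamma)$. That $\Phi$ and $\Psi$ are mutually inverse is most cleanly seen by noting that, degree by degree, the map $\Nat(\Id_\cC,\Gamma_h)\to \Nat(\Id_\cD,\Upsilon_h)$ is a composition of bijections: the equivalence-induced bijection $\eta\mapsto F(\eta)_{F^{\minus 1}}$ between natural transformations of conjugate functors, followed by conjugation by the isomorphism $\Upsilon_h(\beta^{\minus 1})\circ \xi^h_{F^{\minus 1}}$ on one side and $\beta$ on the other. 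So $\Phi$ is an isomorphism of $H$-graded vector spaces, and together with the multiplicativity just established this gives the desired algebra isomorphism.

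The main obstacle is the multiplicativity check: each individual identity is elementary, but correctly tracking the placement of $\beta$, $\beta^{\minus 1}$ and the interaction with the functors $\Upsilon_k$ and $F$ is where errors are easy to make. Once the bookkeeping is right, the proof reduces to one application of the cocycle relation \eqref{eqCommFun} and one application of the naturality of~$\xi^k$.
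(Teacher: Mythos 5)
Your proposal is correct and follows essentially the same route as the paper: the map you define, $\Upsilon_h(\beta^{\minus 1})\circ\xi^h_{F^{\minus 1}}\circ F(\eta)_{F^{\minus 1}}\circ\beta$, is exactly the paper's $\delta_h\circ F(\eta)_{F^{\minus 1}}\circ\alpha^{\minus 1}$ with $\alpha=\beta^{\minus 1}$, and the multiplicativity check rests on the same two facts (the cocycle relation \eqref{eqCommFun} and naturality of $\xi^k$), with bijectivity read off degreewise from the composition of isomorphisms. No gaps.
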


%Note that the algebras in the proposition need not be finite dimensional.

For the proof of Proposition~\ref{PropCDEq}, see Appendix~\ref{AppGroup}.

%\begin{corollary}\label{CorInt}
%With assumptions as in Proposition~\ref{PropCDEq}, we have an algebra isomorphism
%$$\End(\Gamma)^H\;\cong\;\End(\Upsilon)^H.$$
%\end{corollary}

Naturally, the analogue of Proposition~\ref{PropCDEq} for evaluations of functors is also true.
\begin{lemma}\label{LemInt2}
With assumptions as in Proposition~\ref{PropCDEq} and for an object $X\in\cC$, we have an algebra isomorphism
$$\End(\Gamma; X)\;\cong\;\End(\Upsilon; FX).$$
%which intertwines the $H$-action on both algebras as introduced in \ref{StrictSec}.
\end{lemma}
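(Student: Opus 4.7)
The plan is to construct the isomorphism component-wise, using the intertwiners $\xi^h: F\circ\Gamma_h\Rightarrow \Upsilon_h\circ F$ from the hypothesis. For each $h\in H$, define
$$\Phi_h:\;\Hom_{\cC}(X,\Gamma_hX)\;\to\;\Hom_{\cD}(FX,\Upsilon_hFX),\qquad \alpha\;\mapsto\;\xi^h_X\circ F(\alpha),$$
and let $\Phi:=\bigoplus_{h\in H}\Phi_h:\End(\Gamma;X)\to\End(\Upsilon;FX)$. Since $F$ is an equivalence, it induces a bijection on Hom-spaces; since $\xi^h_X$ is an isomorphism (as noted in Section~\ref{CatComm}), post-composition with it is also a bijection. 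Hence each $\Phi_h$ is a linear isomorphism, and so is $\Phi$. The unit is preserved because $\xi^{\gid}=\Id_K$, so $\Phi(\id_X)=\id_{FX}$.

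The main (essentially the only nontrivial) step is to check that $\Phi$ is multiplicative. Take $\alpha\in\Hom_{\cC}(X,\Gamma_hX)$ and $\beta\in\Hom_{\cC}(X,\Gamma_kX)$, so that the product in $\End(\Gamma;X)$ is $\alpha\cdot\beta=\Gamma_k(\alpha)\circ\beta\in\Hom_{\cC}(X,\Gamma_{kh}X)$. The key ingredient is naturality of $\xi^k: F\circ\Gamma_k\Rightarrow \Upsilon_k\circ F$ applied to the morphism $\alpha:X\to\Gamma_hX$, which gives the equality
$$\xi^k_{\Gamma_hX}\circ F(\Gamma_k(\alpha))\;=\;\Upsilon_k(F(\alpha))\circ\xi^k_X.$$
Combining this with the coherence relation \eqref{eqCommFun}, namely $\Upsilon_k(\xi^h)\circ\xi^k_{\Gamma_h}=\xi^{kh}$ evaluated at $X$, one obtains
$$\Upsilon_k(\Phi_h(\alpha))\circ\Phi_k(\beta)\;=\;\Upsilon_k(\xi^h_X)\circ\xi^k_{\Gamma_hX}\circ F(\Gamma_k(\alpha))\circ F(\beta)\;=\;\xi^{kh}_X\circ F(\Gamma_k(\alpha)\circ\beta)\;=\;\Phi_{kh}(\alpha\cdot\beta),$$
which is exactly the multiplicativity identity in $\End(\Upsilon;FX)$.

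The main obstacle, such as it is, is keeping the bookkeeping of the two natural transformations $\xi^h$ and $\xi^k$ straight and recognising that the coherence condition \eqref{eqCommFun} is precisely what is needed to convert the product $\Upsilon_k(\xi^h_X)\circ\xi^k_{\Gamma_hX}$ into $\xi^{kh}_X$; no further input (no adjunction data, no quasi-inverse of $F$) is required, which makes this lemma strictly simpler than Proposition~\ref{PropCDEq}. For clarity one may package the chain of equalities above as a commutative rectangle in $\cD$ built from the naturality square of $\xi^k$ on the left and the triangle \eqref{eqCommFun2} (at the object $X$) on the right.
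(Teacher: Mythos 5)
Your proof is correct and is exactly the intended argument: the paper leaves Lemma~\ref{LemInt2} unproved as the evaluated analogue of Proposition~\ref{PropCDEq}, whose proof in Appendix~\ref{AppGroup} uses the same two ingredients you identify (naturality of $\xi^k$ and the coherence relation~\eqref{eqCommFun}), and your map $\alpha\mapsto\xi^h_X\circ F(\alpha)$ is the evaluation at $X$ of the map used there. The only (cosmetic) caveat is that your symbol $\Phi_h$ for the component maps clashes with the paper's use of $\Phi$ for the categorical action on $A$-mod, so it should be renamed if inserted into the text.
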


\subsubsection{Category of modules}\label{phiPhi}
A group action~$\phi$ on the algebra $A$ induces a group action~$\Phi$ on the category $A$-mod as follows. 
For any $h\in H$, let $\Phi_h$ denote the functor on~$A$-mod, which preserves the underlying vector space of modules and preserves morphisms between modules, but twists the $A$-action by $\phi_{h^{\minus 1}}=\phi^{\minus 1}_h$. This leads to a categorical group action indeed, as, for any $M\in A$-mod, we have
$$\Phi_h\circ\Phi_g(M)={}^{\phi_{h^{\minus 1}}}\left({}^{\phi_{g^{\minus 1}}}M\right)= {}^{\phi_{g^{\minus 1}}\circ \phi_{h^{\minus 1}}}M = \Phi_{hg}(M).$$

\subsubsection{Actions on objects in categories}Consider a category $\cC$ with a strict action~$\Phi$ of~$H$ and an object $X$ in $\cC$. We will now formalise the concept of a compatible action on a module of~\ref{ActAlg} and use this to define an action on endomorphism algebras.

\begin{definition}\label{defXAlg2}
{\rm 
A set of morphisms $\psi=\{\psi_h,\,h\in H\}$, with
$$\psi_h\in\Hom_{\cC}(X,\Phi_{h^{\minus 1}}X)\qquad\mbox{and}\qquad
\Phi_{h^{\minus 1}}(\psi_k)\circ\psi_h\;=\;\psi_{kh},$$
and $\psi_\gid=1_X$, is called a $\Phi$-compatible 
$H$-action on the object $X$.
If $X$ admits a $\Phi$-compatible $H$-action~$\psi$, the algebra $\End_{\cC}(X)$ 
admits an $H$-action~$\theta=\theta_X^{(\Phi,\psi)}$ given by
$$\theta_{g}(\alpha)\;=\;\Phi_{g}(\psi_g\circ\alpha)\circ\psi_{g^{\minus 1}},$$
for all $g\in H$ and~$\alpha\in \End_{\cC}(X)$.
}
\end{definition}

One checks, by direct computation, that the above action is well-defined, meaning $\theta_h\circ\theta_g(\alpha)=\theta_{hg}(\alpha)$ and~$\theta_g(\alpha\circ\beta)=\theta_g(\alpha)\circ\theta_g(\beta)$. %The following example shows that Definition~\ref{defXAlg2} truly extends the notion of compatible actions of~\ref{ActAlg}.

\begin{example}\label{ExTheta}{\rm
Take $\cC=A$-mod and~$\Phi$ induced from an $H$-action~$\phi:H\to\Aut(A)$ as in~\ref{phiPhi}. 
We can interpret $\phi_h$ as an element of~$\Hom_A(A,{}^{\phi_h}A)$, for each $h\in H$. 
The relation~$\Phi_{h^{\minus 1}}(\phi_k)\circ\phi_h=\phi_{kh}$ follows immediately from 
the interpretation of both morphisms in $\End_{\mk}(A)$. Hence, Definition~\ref{defXAlg2} 
allows us to introduce an $H$-action~$\theta=\theta^{\Phi,\phi}_{A}$ on~$\End_A(A)\cong A^{\op}$. 
It follows from direct computation that this can be identified with the original $H$-action~$\phi$.
}\end{example}

\begin{lemma}\label{LemAlgX}
Under the assumptions of Definition~\ref{defXAlg2}, we have an algebra isomorphism
$$\End(\Phi; X)\;\;\tilde\to\;\; \End_\cC(X)\# H,$$
where $\alpha\in \Hom_{\cC}(X,\Phi_{h^{\minus 1}}X)$ is mapped to~$(\Phi_h( \alpha)\circ\psi_{h^{\minus 1}},h)$. %(interpreted as in Lemma~\ref{BeG}) 
\end{lemma}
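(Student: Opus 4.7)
The plan is to check directly that the candidate map $\Theta\colon \End(\Phi;X) \to \End_\cC(X)\# H$ from the statement is a well-defined bijection preserving both the unit and the multiplication. The essential preliminary is that each $\psi_h$ is an isomorphism in $\cC$ with inverse $\Phi_{h^{\minus 1}}(\psi_{h^{\minus 1}})$: setting $k=h^{\minus 1}$ in the cocycle identity $\Phi_{h^{\minus 1}}(\psi_k)\circ\psi_h=\psi_{kh}$ of Definition~\ref{defXAlg2} yields $\Phi_{h^{\minus 1}}(\psi_{h^{\minus 1}})\circ\psi_h=1_X$, and applying the strict auto-equivalence $\Phi_{h^{\minus 1}}$ to the same identity with $h$ and $h^{\minus 1}$ interchanged furnishes the other half. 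A consequence I will need below is that $\psi_{h^{\minus 1}}\circ\Phi_h(\psi_h)=1_{\Phi_hX}$. Granted this, well-definedness is immediate (for $\alpha\in\Hom_\cC(X,\Phi_{h^{\minus 1}}X)$, the composite $\Phi_h(\alpha)\circ\psi_{h^{\minus 1}}$ runs $X\to\Phi_hX\to\Phi_h\Phi_{h^{\minus 1}}X=X$), bijectivity on the $h$-piece is the composition of the $\Hom$-bijection induced by the auto-equivalence $\Phi_h$ with precomposition by the isomorphism $\psi_{h^{\minus 1}}$, and the unit $1_X\in\Hom(X,\Phi_\gid X)$ visibly maps to $(1_X,\gid)$.

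The main computation is multiplicativity. Given $\alpha\in\Hom(X,\Phi_{h^{\minus 1}}X)$ and $\beta\in\Hom(X,\Phi_{k^{\minus 1}}X)$, the product in $\End(\Phi;X)$ from~\ref{StrictSec} is $\alpha\cdot\beta=\Phi_{k^{\minus 1}}(\alpha)\circ\beta$, living in $\Hom(X,\Phi_{(hk)^{\minus 1}}X)$, and applying $\Theta$ together with $\Phi_{hk}\circ\Phi_{k^{\minus 1}}=\Phi_h$ yields
$$\Theta(\alpha\cdot\beta)\;=\;\bigl(\Phi_h(\alpha)\circ\Phi_{hk}(\beta)\circ\psi_{(hk)^{\minus 1}},\,hk\bigr).$$
On the other side, expanding the smash-product action from Definition~\ref{defXAlg2} as $\theta_h(\Phi_k(\beta)\circ\psi_{k^{\minus 1}})=\Phi_h(\psi_h)\circ\Phi_{hk}(\beta)\circ\Phi_h(\psi_{k^{\minus 1}})\circ\psi_{h^{\minus 1}}$ rewrites $\Theta(\alpha)\cdot\Theta(\beta)$ as
$$\bigl(\Phi_h(\alpha)\circ\psi_{h^{\minus 1}}\circ\Phi_h(\psi_h)\circ\Phi_{hk}(\beta)\circ\Phi_h(\psi_{k^{\minus 1}})\circ\psi_{h^{\minus 1}},\,hk\bigr).$$
Two cancellations close the gap: $\psi_{h^{\minus 1}}\circ\Phi_h(\psi_h)=1_{\Phi_hX}$ from the preliminary, and $\Phi_h(\psi_{k^{\minus 1}})\circ\psi_{h^{\minus 1}}=\psi_{(hk)^{\minus 1}}$, which is just the cocycle identity with $(h,k)$ replaced by $(h^{\minus 1},k^{\minus 1})$.

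I expect the only real obstacle to be the bookkeeping in this last step: one has to expand $\theta_h$, push the functor $\Phi_h$ through the composite carefully, and recognise the cocycle identity in a slightly rewritten form. Once the distribution of $\Phi_h$ is carried out before any other simplification, the telescoping of $\psi_{h^{\minus 1}}\circ\Phi_h(\psi_h)$ and the cocycle identification are forced, and the argument requires no input beyond what Definition~\ref{defXAlg2} already supplies.
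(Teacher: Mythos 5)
Your proposal is correct and follows essentially the same route as the paper's proof: both establish the vector-space bijection degreewise (the paper by writing down the explicit inverse $\alpha\mapsto\Phi_{h^{\minus 1}}(\alpha)\circ\psi_h$, you by invoking invertibility of $\psi_{h^{\minus 1}}$, which rests on the same instance of the cocycle identity), and both verify multiplicativity by expanding $\theta_h$ and comparing with $\Theta(\alpha\cdot\beta)$. The only difference is that you spell out the two final cancellations $\psi_{h^{\minus 1}}\circ\Phi_h(\psi_h)=1_{\Phi_hX}$ and $\Phi_h(\psi_{k^{\minus 1}})\circ\psi_{h^{\minus 1}}=\psi_{(hk)^{\minus 1}}$, which the paper leaves implicit in ``the claim follows''.
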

\begin{proof}
We have mutually inverse morphisms of vector spaces given by
$$\Hom_{\cC}(X,\Phi_{h^{\minus 1}}X)\to \End_{\cC}(X);\quad \alpha\mapsto \Phi_h( \alpha)\circ\psi_{h^{\minus 1}},$$
and 
$$\End_{\cC}(X)\to\Hom_{\cC}(X,\Phi_{h^{\minus 1}}X);\quad \alpha\mapsto \Phi_{h^{\minus 1}}( \alpha)\circ\psi_{h}.$$

Hence,
the proposed morphism is an isomorphism of vector spaces. For 
any elements $\alpha\in \Hom_{\cC}(X,\Phi_{h^{\minus 1}}X)$ and~$\beta\in \Hom_{\cC}(X,\Phi_{k^{\minus 1}}X)$, we have
$\alpha\beta=\Phi_{k^{\minus 1}}(\alpha)\circ\beta$, which is mapped to 
$$(\Phi_h(\alpha)\circ \Phi_{hk}(\beta)\circ \psi_{(hk)^{\minus 1}},hk).$$
On the other hand, by~\ref{AphiH} and Definition~\ref{defXAlg2}, the product of~$(\Phi_h( \alpha)\circ\psi_{h^{\minus 1}},h)$ and~$(\Phi_k( \beta)\circ\psi_{k^{\minus 1}},k)$ inside $\End_{\cC}(X)\#H$ is given by
$$(\Phi_h( \alpha)\circ\psi_{h^{\minus 1}}\circ\theta_h(\Phi_k( \beta)\circ\psi_{k^{\minus 1}}),hk)=(\Phi_h( \alpha)\circ\psi_{h^{\minus 1}}\circ\Phi_h(\psi_h)\circ\Phi_{hk}(\beta)\circ \Phi_h(\psi_{k^{\minus 1}})\circ\psi_{h^{\minus 1}},hk),$$
and the claim follows.
\end{proof}

%%%%%%%%%%%%%%%%%%%%%%%%%%%%%%%%%%%%%%%%%%%%%%%%%%%%%%%%%%%%%%%%%%%%%%%%%%%%%

\section{Extended centre}\label{SecExtCen}
We fix a group $H$ and a finite dimensional algebra $A$, for which there is a group homomorphism $\phi:H\to \Aut(A)$,~$f\mapsto \phi_f$. %We stress that here $H$ is not assumed to be abelian and its operation will be written as multiplication.

\begin{definition}\label{DefExtCen}
{\rm
The $\phi$-extended centre $\cZ^\phi(A)$ of~$A$ is the subalgebra of~$A\otimes\mk H$, spanned by all
$(a,f)$, where $a\in A$ and~$f\in H$, such that
$$a\,b\;=\; \phi_f(b)\,a,\qquad\mbox{for all }\, b\in A. $$
}
\end{definition}

The fact that $\cZ^{\phi}(A)$ is closed under multiplication on~$A\otimes\mk H$ is immediate. %Note that the usual centre $\cZ(A)$ is the image of~$\cZ^\phi(A)\subset A\otimes\mk H$ under the counit morphism $A^{$
Recalling the definition of the algebras in~\ref{AphiH} leads to the following lemma.

\begin{lemma}\label{LemZeta}
{\hspace{2mm}}

\begin{enumerate}[$($i$)$]
\item\label{LemZeta.1} The subalgebra $\zeta^\phi(A)$ of~$A^{\op}\# H $ given by elements $(a,h)$ satisfying
$$(a,h)(b,k)=(ab,hk),\qquad\mbox{ for all }\;(b,k)\in A^{\op}\# H ,$$
is isomorphic to~$\cZ^\phi(A)$.
\item\label{LemZeta.2}
The subalgebra $\zeta_\phi(A)$ of~$A\# H$ given by elements $(a,h)$ satisfying
$$(a,h)(b,k)=(ba,hk),\qquad\mbox{ for all }\;(b,k)\in A\# H,$$
is isomorphic to~$\cZ^\phi(A^{\op})$.
\end{enumerate}
\end{lemma}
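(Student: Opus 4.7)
The plan is to observe that in both parts the defining condition on $(a,h)$ turns out to be precisely a centralising condition, and that on the resulting subspace the twisted multiplication collapses to the untwisted tensor multiplication.

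For part (i): I will first unpack the defining relation. Recalling from~\ref{AphiH} that the product in $A^{\op}\# H$ is $(a,h)(b,k)=(\phi_h(b)a,hk)$, the condition $(a,h)(b,k)=(ab,hk)$ for every $(b,k)\in A^{\op}\# H$ is equivalent to requiring $\phi_h(b)a=ab$ for all $b\in A$ (the choice of $k$ is immaterial). This is exactly the defining relation in Definition~\ref{DefExtCen} for $(a,h)$ to lie in $\cZ^\phi(A)\subset A\otimes\mk H$. Hence, as subsets of $A\otimes\mk H$, we have the set-theoretic equality $\zeta^\phi(A)=\cZ^\phi(A)$, so the identity on the underlying vector space provides a linear bijection between them.

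Next I check that this bijection is an algebra morphism. For two elements $(a,h),(c,g)\in\zeta^\phi(A)$, the product computed in $A^{\op}\# H$ is $(\phi_h(c)a,hg)$, while the product computed in $A\otimes\mk H$ is $(ac,hg)$. Since $(a,h)$ satisfies $ab=\phi_h(b)a$ for all $b\in A$, the element $\phi_h(c)a$ equals $ac$, so the two products agree. Thus the identity on elements extends to an algebra isomorphism $\zeta^\phi(A)\xrightarrow{\sim}\cZ^\phi(A)$.

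Part (ii) is handled by the same argument applied to $A^{\op}$ in place of $A$. Using the product $(a,h)(b,k)=(a\phi_h(b),hk)$ in $A\# H$, the condition $(a,h)(b,k)=(ba,hk)$ for all $(b,k)$ becomes $a\phi_h(b)=ba$ for all $b\in A$. Rewriting the multiplication of $A$ as the opposite multiplication of $A^{\op}$, namely $ba=a\cdot_{\op}b$ and $a\phi_h(b)=\phi_h(b)\cdot_{\op}a$, this is exactly the defining relation for $(a,h)\in\cZ^\phi(A^{\op})\subset A^{\op}\otimes\mk H$. The same check as in part (i) shows that the twisted multiplication in $A\# H$ coincides, on this subspace, with the untwisted multiplication in $A^{\op}\otimes\mk H$, giving the asserted isomorphism $\zeta_\phi(A)\cong\cZ^\phi(A^{\op})$.

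There is no real obstacle here: the content of the statement is simply that imposing the centralising condition removes the twist from the smash product, so that the identity on underlying vectors is both well-defined and multiplicative. The only care required is keeping track of the op-convention switch between parts (i) and (ii).
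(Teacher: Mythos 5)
Your proof is correct and matches the paper's intent exactly: the paper offers no written proof, stating only that the lemma follows by "recalling the definition of the algebras in \ref{AphiH}", and your argument is precisely that unwinding — the defining relation of $\zeta^\phi(A)$ (resp. $\zeta_\phi(A)$) is the centralising condition of Definition~\ref{DefExtCen} for $A$ (resp. $A^{\op}$), and on that subspace the smash-product twist collapses to the componentwise multiplication of $A\otimes\mk H$. No gaps.
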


\subsection{Categorical formulation}

We use the notions introduced in~\ref{StrictSec} for the categorical group action~$\Phi$ on~$A$-mod obtained from $\phi$ as in~\ref{phiPhi}. The main result of this subsection is the following theorem, which is a generalisation of Equation~\eqref{CentreId}.

\begin{theorem}\label{ThmZphiFunctor}
We have an algebra isomorphism
$$\cZ^{\phi}(A)\;\cong\;\End(\Phi),$$
under which $(a,h)\in \cZ^{\phi}(A)$ is identified with $\eta:\Id\Rightarrow\Phi_{h^{-1}}$, where $\eta_M:M\to{}^{\phi_h}M$ is given by $\eta_M(v)=av$, for any $A$-module $M$ and all $v\in M$. 
\end{theorem}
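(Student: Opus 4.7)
My plan is to exhibit an explicit two-sided inverse to the assignment $\Psi:(a,h)\mapsto \eta^{(a,h)}$ described in the statement, and then to check that $\Psi$ respects multiplication.

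First I would verify that $\Psi$ is well-defined. Setting $\eta^{(a,h)}_M(v):=av$, I need $\eta^{(a,h)}_M$ to be $A$-linear as a map $M\to {}^{\phi_h}M=\Phi_{h^{\minus 1}}M$. Since the action of $b\in A$ on ${}^{\phi_h}M$ is $b\cdot w=\phi_h(b)w$, this $A$-linearity amounts to $a(bv)=\phi_h(b)(av)$, which is forced by the defining relation $ab=\phi_h(b)a$ of $\cZ^\phi(A)$. Naturality of $\eta^{(a,h)}$ in $M$ is automatic because $\eta^{(a,h)}_M$ is left multiplication by $a$ on the underlying vector space, and this commutes with every $A$-linear morphism. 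Hence $\Psi$ lands in $\End(\Phi)=\bigoplus_h\Nat(\Id,\Phi_h)$ and is $\mk$-linear.

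For the inverse, given $\eta\in\Nat(\Id,\Phi_{h^{\minus 1}})$ I would set $a_\eta:=\eta_A(1_A)\in A$. Naturality of $\eta$ applied to the $A$-linear morphisms $f_v:A\to M$, $x\mapsto xv$, evaluated at $1_A$, yields
\[
\eta_M(v)=\Phi_{h^{\minus 1}}(f_v)\bigl(\eta_A(1_A)\bigr)=a_\eta v,
\]
because $\Phi_{h^{\minus 1}}$ acts as the identity on underlying linear maps. To see that $(a_\eta,h)\in \cZ^\phi(A)$, I would use that $\eta_A$ is itself $A$-linear as a map $A\to {}^{\phi_h}A$: evaluating at $b=b\cdot 1_A$ gives $a_\eta b=\phi_h(b)a_\eta$ for every $b\in A$. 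Together these observations show that $\eta\mapsto(a_\eta,h)$ is a two-sided inverse of $\Psi$.

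It remains to check that $\Psi$ is multiplicative. For $(a,h),(b,k)\in \cZ^\phi(A)$, the product inside $A\otimes\mk H$ is $(ab,hk)$, whose image sends $v\in M$ to $(ab)v$. Using the multiplication rule for $\End(\Phi)$ from Section~\ref{StrictSec}, the product $\eta^{(a,h)}\cdot\eta^{(b,k)}=\Phi_{k^{\minus 1}}(\eta^{(a,h)})\circ \eta^{(b,k)}$ lies in $\Nat(\Id,\Phi_{(hk)^{\minus 1}})$ and at component $M$ sends $v$ first to $bv$ and then to $a(bv)=(ab)v$; the two agree. The only delicate point is the bookkeeping: $\Psi$ reverses the $H$-grading, sending the $h$-homogeneous part of $\cZ^\phi(A)$ into $\Nat(\Id,\Phi_{h^{\minus 1}})$, so one must make sure the contravariance in the multiplication convention of Section~\ref{StrictSec} matches the ordering in $A\otimes\mk H$; apart from this, every step is a one-line verification.
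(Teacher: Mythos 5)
Your proposal is correct and follows essentially the same route as the paper: the paper also realises the isomorphism by evaluating at the left regular module ${}_AA$ (Lemmas~\ref{LemApH}--\ref{LemImEv}), with your naturality argument via the morphisms $f_v:A\to M$ being the explicit form of the paper's remark that evaluation at the projective generator $A$ is injective. The only cosmetic difference is that the paper routes the computation through the smash product $A^{\op}\# H$ and the subalgebra $\zeta^\phi(A)$ of Lemma~\ref{LemZeta}\eqref{LemZeta.1} (objects it reuses later), whereas you verify multiplicativity and the two-sided inverse directly; your bookkeeping of the $h\mapsto h^{-1}$ twist against the convention $\alpha\otimes\beta\mapsto\Gamma_k(\alpha)\circ\beta$ is also handled correctly.
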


\begin{remark}{\rm
The combination of Theorem~\ref{ThmZphiFunctor} and Proposition~\ref{InterInv} implies an isomorphism between the
extended centres  of two Morita equivalent algebras with $H$-actions for which the induced 
$H$-actions on their module  categories are intertwined by the Morita equivalence. We will generalise this 
statement in Theorem~\ref{ThmDerEqPhi}.
}\end{remark}

Now we start the proof of Theorem~\ref{ThmZphiFunctor}.

\begin{lemma}\label{LemApH}
There is an algebra isomorphism
$$\End(\Phi; A)=\bigoplus_{h\in H}\Hom_A(A,{}^{\phi_h}A)\,\to\, A^{\op}\# H,$$
which maps $\alpha\in\Hom_A(A,{}^{\phi_{h}}A)$ to~$(\alpha(1),h)$.\end{lemma}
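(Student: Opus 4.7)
The plan is to establish the stated map as a $\mk$-linear isomorphism first and then verify it is multiplicative and unit-preserving. The whole proof is essentially a careful bookkeeping exercise: the underlying spaces have a standard identification via evaluation at $1$, and the algebraic structures on each side are both built from the datum $\phi$, so the only real content is matching the two ways of encoding it.

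For the vector space part, I would start from the elementary fact that, for any left $A$-module $N$, evaluation at $1_A$ gives a $\mk$-linear isomorphism $\Hom_A(A,N)\cong N$. Applied to $N={}^{\phi_h}A$, this gives $\alpha\mapsto \alpha(1)$ as a bijection $\Hom_A(A,{}^{\phi_h}A)\to A$, whose inverse sends $x\in A$ to the map $a\mapsto \phi_h(a)x$; this is $A$-linear because, in ${}^{\phi_h}A$, left multiplication by $a$ is by definition $\phi_h(a)\cdot$. Summing over $h\in H$ yields a $\mk$-linear isomorphism from $\End(\Phi;A)=\bigoplus_{h\in H}\Hom_A(A,{}^{\phi_h}A)$ onto $A\otimes\mk H$, which is the underlying vector space of $A^{\op}\# H$ as defined in~\ref{AphiH}. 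The unit of $\End(\Phi;A)$ is $1_A\in\Hom_A(A,A)$ sitting in the $\gid$-summand, and this maps to $(1_A,\gid)$, the unit of $A^{\op}\# H$.

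For multiplicativity, I would take $\alpha\in\Hom_A(A,{}^{\phi_h}A)$ and $\beta\in\Hom_A(A,{}^{\phi_k}A)$ and compute the product prescribed in Section~\ref{StrictSec}. The twist functor acts trivially on underlying $\mk$-linear maps, so the product coincides with the plain composition $\alpha\circ\beta$ of $\mk$-linear maps, now regarded as lying in the $hk$-indexed summand. Evaluating at $1$, and using the twisted $A$-linearity $\alpha(xy)=\phi_h(x)\alpha(y)$ inherited from $\alpha:A\to{}^{\phi_h}A$, one finds
\[
(\alpha\beta)(1)=\alpha(\beta(1))=\phi_h(\beta(1))\,\alpha(1).
\]
On the other hand, the product in $A^{\op}\# H$ from~\ref{AphiH} is $(\alpha(1),h)(\beta(1),k)=(\phi_h(\beta(1))\alpha(1),hk)$, which matches.

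The only obstacle is to keep the conventions straight: the twist $\Phi_h={}^{\phi_{h^{-1}}}(-)$ from~\ref{phiPhi} has to be paired with the composition rule $\alpha\otimes\beta\mapsto \Gamma_k(\alpha)\circ\beta$ from Section~\ref{StrictSec} and with the \emph{opposite}-order smash product of~\ref{AphiH}, so one must check that each of the three conventions contributes the factors in the right order. Once indexing is fixed, the computation is entirely mechanical and there is no conceptual obstruction.
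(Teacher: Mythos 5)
Your proof is correct and follows essentially the same route as the paper's: the underlying $\mk$-linear isomorphism is evaluation at $1_A$, and multiplicativity is checked by computing $(\alpha\beta)(1)=\alpha(\beta(1))=\phi_h(\beta(1))\alpha(1)$ via the twisted $A$-linearity of $\alpha:A\to{}^{\phi_h}A$, matching the product $(\phi_h(b)a,hk)$ in $A^{\op}\#H$. Your extra care with the inverse map and the unit is fine but not needed beyond what the paper records.
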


\begin{proof}
The proposed morphism is, clearly, an isomorphism of vector spaces. Now, consider $\alpha: A\to {}^{\phi_{h}}A$ with $a:=\alpha(1)$ and~$\beta: A\to {}^{\phi_{k}}A$ with $b:=\beta(1)$. Then $\alpha\beta=\Phi_{k^{\minus 1}}(\alpha)\circ\beta :A\to {}^{\phi_{hk}}A$, so we have $\alpha\beta(1)=\phi_h(b)a$. Hence $\alpha\beta$ gets mapped to~$(\phi_{h}(b)a,hk)$, meaning that we obtain indeed an algebra isomorphism.
\end{proof}

\begin{lemma}\label{ExistNatT}
For each element $(a,h)\in \cZ^\phi(A)$, there exists a natural transformation
$\eta:\Id\Rightarrow\Phi_{h^{\minus 1}}$ such that $\eta_M:M\to{}^{\phi_h}M$ is given 
by $\eta_M(v)=av$, for any $A$-module $M$ and all $v\in M$.
\end{lemma}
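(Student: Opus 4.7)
The plan is to unpack definitions and directly verify both the well-definedness of each component $\eta_M$ as an $A$-module map and the naturality in $M$, with essentially all the content coming from the defining identity of the extended centre.

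First, fix $(a,h) \in \cZ^{\phi}(A)$ and, for every $A$-module $M$, define $\eta_M \colon M \to {}^{\phi_h}M$ as the $\mk$-linear map $v \mapsto av$. Recall that by the convention of \ref{phiPhi}, $\Phi_{h^{-1}}M = {}^{\phi_h}M$, so the target of $\eta_M$ is the correct object. To see that $\eta_M$ is a homomorphism of $A$-modules, take $b \in A$ and $v \in M$. The $A$-action on ${}^{\phi_h}M$ is $b \cdot_{\phi_h} w = \phi_h(b)w$, so
\begin{equation*}
\eta_M(bv) \;=\; a(bv) \;=\; (ab)v \;=\; (\phi_h(b)a)v \;=\; \phi_h(b)(av) \;=\; b \cdot_{\phi_h} \eta_M(v),
\end{equation*}
where the middle equality is precisely the condition $ab = \phi_h(b)a$ that characterises membership of $(a,h)$ in $\cZ^{\phi}(A)$ (Definition~\ref{DefExtCen}).

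Next, we verify naturality. Let $f \colon M \to N$ be a morphism in $A$-mod. Since $\Phi_{h^{-1}}$ preserves underlying vector spaces and morphisms, the map $\Phi_{h^{-1}}(f) \colon {}^{\phi_h}M \to {}^{\phi_h}N$ is given by the same underlying $\mk$-linear map $f$. Therefore, for $v \in M$,
\begin{equation*}
\Phi_{h^{-1}}(f) \circ \eta_M (v) \;=\; f(av) \;=\; a f(v) \;=\; \eta_N \circ f(v),
\end{equation*}
using only that $f$ is $A$-linear. This gives the required commutative square, so $\eta$ is a natural transformation $\Id \Rightarrow \Phi_{h^{-1}}$.

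There is essentially no obstacle here: both assertions reduce to a single application of the identity $ab = \phi_h(b)a$ and to the fact that ${}^{\phi_h}(-)$ acts as the identity on underlying vector spaces and morphisms. The real work is postponed to later lemmas, where one must show that the assignment $(a,h) \mapsto \eta$ is an algebra isomorphism onto $\End(\Phi)$; the present lemma merely produces the natural transformation which will furnish the map in Theorem~\ref{ThmZphiFunctor}.
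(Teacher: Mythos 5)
Your proof is correct and follows exactly the same route as the paper's: $A$-linearity of $\eta_M$ is the defining identity $ab=\phi_h(b)a$ of $\cZ^\phi(A)$, and naturality reduces to the fact that $\Phi_{h^{\minus 1}}$ acts as the identity on underlying vector spaces and morphisms. You simply spell out the two computations that the paper leaves implicit.
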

\begin{proof}
That $\eta_M$ is $A$-linear follows from the definition of~$\cZ^{\phi}(A)$. For a morphism $\alpha:M\to N$, we have $\eta_N\circ \alpha=\Phi_{h^{-1}}(\alpha)\circ \eta_M$, which follows immediately from the fact that $\Phi_{h^{-1}}(\alpha)=\alpha$ as morphisms of~$\mk$-vector spaces. Thus the family $\{\eta_M\}$ yields indeed a natural transformation.
\end{proof}

Now we study the evaluation in Lemma \ref{LemEva} for the left regular $A$-module. Evaluation is then automatically injective since $A$ is a projective generator.

\begin{lemma}\label{LemImEv}
Denote the composition of 
the map $\Ev^{\Phi}_A:\End(\Phi)\hookrightarrow \End(\Phi; A)$ 
with the isomorphism in Lemma~\ref{LemApH} by
$$\overline{\Ev}^{\Phi}_A: \End(\Phi)\hookrightarrow A^{\op}\# H .$$ 
Then the image of~$\overline{\Ev}^{\Phi}_A$ coincides with the subalgebra 
$\zeta^\phi(A)\subset A^{\op}\# H$ in Lemma~\ref{LemZeta}\eqref{LemZeta.1}. 
\end{lemma}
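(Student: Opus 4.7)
The plan is to verify both inclusions between $\mathrm{Im}(\overline{\Ev}^{\Phi}_A)$ and $\zeta^{\phi}(A)$ inside $A^{\op}\#H$. The core idea is that a natural transformation $\eta\in\Nat(\Id,\Phi_h)$ is rigid enough that its component $\eta_A$ is determined by the single element $a:=\eta_A(1)\in A$, and moreover $a$ is forced to satisfy the extended-centre relation; conversely, any element of $\zeta^{\phi}(A)$ recovers such a natural transformation via Lemma~\ref{ExistNatT}.

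For the inclusion $\mathrm{Im}(\overline{\Ev}^{\Phi}_A)\subseteq \zeta^{\phi}(A)$, I would fix $\eta\in\Nat(\Id,\Phi_h)$ and set $a:=\eta_A(1)$. Since $\Phi_hA={}^{\phi_{h^{\minus 1}}}A$, the morphism $\eta_A$ lies in $\Hom_A(A,{}^{\phi_{h^{\minus 1}}}A)$, and the isomorphism of Lemma~\ref{LemApH} sends it to $(a,h^{\minus 1})\in A^{\op}\#H$. It remains to check the relation $ab=\phi_{h^{\minus 1}}(b)\,a$ for all $b\in A$, which I would obtain by computing $\eta_A(b)$ in two ways. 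On one hand, $A$-linearity of $\eta_A$ as a map $A\to{}^{\phi_{h^{\minus 1}}}A$ gives $\eta_A(b)=\phi_{h^{\minus 1}}(b)\,\eta_A(1)=\phi_{h^{\minus 1}}(b)\,a$. On the other hand, the right multiplication $r_b:A\to A$, $x\mapsto xb$, is a morphism in $A$-mod, and since $\Phi_h$ preserves morphisms one has $\Phi_h(r_b)=r_b$; evaluating the naturality identity $\Phi_h(r_b)\circ\eta_A=\eta_A\circ r_b$ at $1\in A$ then gives $\eta_A(b)=a\,b$. Comparing the two expressions yields precisely the defining relation of $\zeta^{\phi}(A)$.

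For the reverse inclusion, let $(a,g)\in\zeta^{\phi}(A)$, so that $ab=\phi_g(b)\,a$ for every $b\in A$. Lemma~\ref{ExistNatT}, applied with index $g$, produces a natural transformation $\eta\in\Nat(\Id,\Phi_{g^{\minus 1}})\subseteq \End(\Phi)$ with $\eta_M(v)=av$ for every $M$ and every $v\in M$. In particular $\eta_A(1)=a$, so $\eta_A\in\Hom_A(A,{}^{\phi_g}A)$ corresponds under Lemma~\ref{LemApH} to $(a,g)$, placing $(a,g)$ in the image of $\overline{\Ev}^{\Phi}_A$.

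I do not anticipate a substantive obstacle here; once the conventions are fixed, the only delicate point is consistently tracking the inverse appearing in the definition $\Phi_h={}^{\phi_{h^{\minus 1}}}(-)$, which is the reason a natural transformation in $\Nat(\Id,\Phi_h)$ is matched with a pair labelled by $h^{\minus 1}$ on the algebra side. Mixing up this convention would be the only serious risk.
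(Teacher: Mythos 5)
Your proof is correct and follows essentially the same route as the paper's: the forward inclusion is obtained by writing the naturality square for right multiplication by $b$ (the paper's $\beta$ with $\beta(1)=b$ is exactly your $r_b$) and comparing it with $A$-linearity of $\eta_A$ into the twisted module, while the reverse inclusion is exactly Lemma~\ref{ExistNatT}. The only cosmetic difference is that you index by $\eta\in\Nat(\Id,\Phi_h)$ and land on $(a,h^{\minus 1})$, whereas the paper starts from $\eta\in\Nat(\Id,\Phi_{h^{\minus 1}})$ and lands on $(a,h)$; you track this relabelling consistently.
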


\begin{proof}
Consider a natural transformation~$\eta: \Id\Rightarrow \Phi_{h^{\minus 1}}$. Evaluation of~$\eta$ yields a morphism $\eta_A:A\to{}^{\phi_h}A$, which fits into a commutative diagram
\begin{displaymath}
    \xymatrix{
       A\ar[rr]^{\eta_A}\ar[d]_{\beta}&& \Phi_{h^{\minus 1}}A\ar[d]^{\Phi_{h^{\minus 1}}(\beta)}\\
      A\ar[rr]^{\eta_A}&   & \Phi_{h^{\minus 1}}A,}
\end{displaymath}
for any morphism $\beta\in \End_A(A)\cong A^{\op}$. We take an arbitrary $b\in A$ and the corresponding $\beta\in \End_A(A)$ such that $\beta(1)=b$. The condition that the above diagram commutes is then equivalent to the equality $\eta_A(1) b=\phi_h(b)\eta_A(1)$. We set $a:=\eta_A(1)\in A$ and thus find that $\im(\overline{\Ev}^{\Phi}_A)$ corresponds to those $(a,h)\in A^{\op}\# H $ for which we have $ab=\phi_h(b)a$, for all $b\in A$. The definition of~$A^{\op}\# H $ in \ref{AphiH} implies that we can characterise these elements $(a,h)$ equivalently by the condition
$$(a,h)(b,k)=(ab,hk),$$
for all $(b,k)\in A^{\op}\# H$. %The result thus follows from Lemma~\ref{LemZeta}(1).
\end{proof}

\begin{proof}[Proof of Theorem~\ref{ThmZphiFunctor}]
The proposed isomorphism is induced by Lemma~\ref{LemZeta}\eqref{LemZeta.1} and $\overline{\Ev}^{\Phi}_A$ in Lemma~\ref{LemImEv}. The stated properties of the isomorphism follow by definition of~$\overline{\Ev}^{\Phi}_A$.
\end{proof}

\subsection{Derived equivalences}
The main result of this subsection is the following theorem, which can be viewed as a generalisation of
\cite[Proposition~9.2]{Derived1}. We denote by $\cD^b(A)$ the bounded derived category of the abelian category $A$-mod.

\begin{theorem}\label{ThmDerEqPhi}
Let $A,B$ be finite dimensional algebras equipped with $H$-actions $\phi:H\to\Aut(A)$ and~$\omega:H\to\Aut(B)$, respectively. Let
$$F\,:\;\cD^b(A)\;\,\tilde\to\,\;\cD^b(B)$$
be an equivalence of triangulated categories such that $F$ intertwines $\Phi$ and~$\Omega$ (the categorical actions on $\cD^b(A)$ and $\cD^b(B)$ corresponding to~$\phi$ and~$\omega$) as in \ref{CatComm}. Then $F$ induces an algebra morphism
$$\cZ^{\phi}(A)\;\to\;\cZ^{\omega}(B),$$
which is an isomorphism if $F$ is a strong derived equivalence.
\end{theorem}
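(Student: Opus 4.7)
The plan is to reduce Theorem~\ref{ThmDerEqPhi} to Proposition~\ref{PropCDEq} by lifting the setup of Theorem~\ref{ThmZphiFunctor} to the derived level. Since each $\Phi_h$ on $A$-mod is an exact functor (in fact an isomorphism of abelian categories), the strict $H$-action $\Phi$ extends termwise to a strict $H$-action $\Phi_\bullet$ on $\cD^b(A)$, and likewise $\Omega$ extends to $\Omega_\bullet$ on $\cD^b(B)$; the intertwining hypothesis on $F$ is to be read with respect to these derived lifts. Termwise extension of natural transformations, $\eta\mapsto\eta^\bullet$ with $(\eta^\bullet)_{X^\bullet}^i:=\eta_{X^i}$, gives an algebra morphism
$$\sigma_A\colon\End(\Phi)\longrightarrow\End(\Phi_\bullet),$$
injective because restriction $\rho_A$ to degree-zero complexes satisfies $\rho_A\circ\sigma_A=\Id$, and analogously one has $\sigma_B,\rho_B$ on the $B$-side.

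Now Proposition~\ref{PropCDEq}, applied to $F$ with its intertwining data on $\cD^b(A)$ and $\cD^b(B)$, yields an algebra isomorphism $\Xi\colon\End(\Phi_\bullet)\xrightarrow{\sim}\End(\Omega_\bullet)$. I define the algebra morphism promised by the theorem as the composition
$$\cZ^\phi(A)\;\cong\;\End(\Phi)\;\xrightarrow{\sigma_A}\;\End(\Phi_\bullet)\;\xrightarrow{\Xi}\;\End(\Omega_\bullet)\;\xrightarrow{\rho_B}\;\End(\Omega)\;\cong\;\cZ^\omega(B),$$
where the flanking isomorphisms come from Theorem~\ref{ThmZphiFunctor}. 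Each intermediate arrow is a well-defined algebra morphism, so the composition exists without any further hypothesis on $F$, settling the first assertion. Concretely, starting from $(a,h)\in\cZ^\phi(A)$ with associated $\eta_M(v)=av$, the image in $\cZ^\omega(B)$ is read off by evaluating the transported natural transformation $\Xi(\sigma_A(\eta))$ at $_BB$, in the spirit of Lemma~\ref{LemImEv}.

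To promote this morphism to an isomorphism when $F$ is strong, it suffices to show that $\sigma_A$ and $\rho_B$ are each isomorphisms, equivalently that every $\tilde\eta\in\End(\Phi_\bullet)$ is the termwise extension of its restriction to $A$-mod (and likewise on the $B$-side). By the derived analogue of Lemma~\ref{LemImEv}, using that $A$ is projective and concentrated in degree zero so $\Hom_{\cD^b(A)}(A,\Phi_{h^{-1}}^\bullet A)=\Hom_A(A,{}^{\phi_h}A)$, the composite $\End(\Phi_\bullet)\to A^{\op}\# H$ still has image precisely $\zeta^\phi(A)\cong\End(\Phi)$; what remains is the injectivity of this evaluation. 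The main obstacle is exactly this injectivity: although $A$ is a compact generator of $\cD^b(A)$, propagating the vanishing of $\tilde\eta_A$ to arbitrary objects of $\cD^b(A)$ via morphisms of distinguished triangles is not automatic for natural transformations of triangulated functors. The strong hypothesis enters here because $F({}_AA)\simeq T$ for a tilting $B$-module $T$ with $\End_B(T)\cong A^{\op}$, so the evaluation arguments on the two sides can be run compatibly via Lemma~\ref{LemInt2}; without strongness one lacks this module-theoretic anchor on the $B$-side, which (as the introduction flags) is precisely the current limitation of the method.
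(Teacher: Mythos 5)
Your construction of the algebra morphism itself is sound: termwise extension $\sigma_A\colon\End(\Phi)\to\End(\Phi_\bullet)$, the isomorphism $\Xi$ from Proposition~\ref{PropCDEq}, and restriction $\rho_B$ to stalk complexes are all well-defined algebra morphisms, so the first assertion goes through (the paper builds its map in the opposite direction, $\cZ^\omega(B)\to\cZ^\phi(A)$, by evaluating $\End(\Omega)$ at $T_\bullet=F({}_AA)$, but that is a cosmetic difference).

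The isomorphism part, however, has a genuine gap. You reduce the claim to ``$\sigma_A$ and $\rho_B$ are isomorphisms,'' i.e.\ to identifying the abelian-level and derived-level natural transformation algebras, and you correctly observe that the required injectivity of evaluation at a generator does not propagate along distinguished triangles --- but you never close this gap. Invoking strongness and Lemma~\ref{LemInt2} does not supply the missing injectivity for $\End(\Phi_\bullet)$ or $\End(\Omega_\bullet)$; indeed Appendix~\ref{AppB2} shows evaluation at a tilting \emph{complex} can fail to be faithful even for transformations coming from the abelian level, so your reduction targets a statement the paper deliberately avoids. The paper's route never leaves $\End(\Omega)$: it equips $T_\bullet=F({}_AA)$ with the $\Omega$-compatible action $\psi_h=\xi^{h^{-1}}_A\circ F(\phi_h)$, checks via Example~\ref{ExTheta} that the induced action $\theta$ on $\Lambda=\End_{\cD^b(B)}(T_\bullet)\cong A^{\op}$ is $\phi$, and so gets $\cZ^\omega(B)\xrightarrow{\zeta_{T_\bullet}}\zeta_\theta(\Lambda)\cong\zeta_\phi(A^{\op})\cong\cZ^\phi(A)$ by Lemma~\ref{imcom}. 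When $F$ is strong, $T_\bullet$ is an honest tilting \emph{module}, so injectivity follows from the abelian-level faithfulness of evaluation at tilting modules (Lemma~\ref{LemEvT}(i) via Lemma~\ref{PropSumm}) --- evaluating an abelian-level transformation at a module is the same in $B$-mod and in $\cD^b(B)$. The same argument for $F^{-1}$ (Proposition~\ref{InterInv}) gives an injection the other way, and finite-dimensionality together with preservation of the $H$-component forces both injections to be bijections. This two-sided injectivity argument, rather than surjectivity of $\sigma$ or injectivity of $\rho$, is the missing idea in your proposal.
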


\begin{proof}%[Proof of Theorem~\ref{ThmDerEqPhi}]
Let $\xi^h:F\circ \Phi_h\Rightarrow \Omega_h\circ F$ be the natural transformations which give the intertwining relations.
Let $T_\bullet\in\cD^b(B)$ be the complex $F({}_AA)$. For each $h\in H$, we define
$$\psi_h\,:=\, \xi_A^{h^{\minus 1}}\circ F(\phi_h)\;\,\in\;\, \Hom_{\cD^b(B)}(T_\bullet,\Omega_{h^{\minus 1}}T_\bullet),$$
where we interpret $\phi_h$ as an element of~$\Hom_A(A,\Phi_{h^{\minus 1}}A)$.
We calculate, using the definition of~$\xi^{k^{\minus 1}}$ and Equation~\eqref{eqCommFun},
\begin{eqnarray*}
\Omega_{k^{\minus 1}}(\psi_h)\circ\psi_k &=& \Omega_{k^{\minus 1}}(\xi_A^{h^{\minus 1}})\circ (\Omega_{k^{\minus 1}}\circ F)(\phi_h)\circ \xi_A^{k^{\minus 1}}\circ F(\phi_k)\\
&=& \Omega_{k^{\minus 1}}(\xi_A^{h^{\minus 1}})\circ \xi_{{}^{\phi_h}A}^{k^{\minus 1}}\circ (F\circ\Phi_{k^{\minus 1}})(\phi_h)\circ F(\phi_k)\\
&=& \xi_{A}^{(hk)^{\minus 1}}\circ F((\Phi_{k^{\minus 1}})(\phi_h)\circ \phi_k)=  \xi_{A}^{(hk)^{\minus 1}}\circ F(\phi_{hk})=\psi_{hk}.
\end{eqnarray*}
Hence, $\psi$ yields an $\Omega$-compatible $H$-action on~$T_\bullet$ and we can apply Definition~\ref{defXAlg2} to define an action~$\theta=\theta^{\Omega,\psi}_{T_\bullet}:H\to \Lambda:=\End_{\cD^b(B)}(T_\bullet)$.
We claim that, under the algebra isomorphism $A^{\op}\to \Lambda$ induced by $A^{\op}\cong \End_A(A)$ and~$F$, the action~$\theta$ corresponds to the action~$\phi$. To prove this, we consider $\alpha\in \End_A(A)$ and calculate
\begin{eqnarray*}
\theta_{h}(F(\alpha))&=&(\psi_{h^{\minus 1}})^{\minus 1}\circ (\Omega_{h}\circ F)(\alpha)\circ \psi_{h^{\minus 1}}\\
&=&(\xi_A^{h}\circ F(\phi_{h^{\minus 1}}))^{\minus 1}\circ (\Omega_{h}\circ F)(\alpha)\circ \xi_A^{h}\circ F(\phi_{h^{\minus 1}})\\
&=&(\xi_A^{h}\circ F(\phi_{h^{\minus 1}}))^{\minus 1}\circ  \xi_A^{h}\circ (F\circ\Phi_{h})(\alpha)\circ F(\phi_{h^{\minus 1}})\\
&=& F( (\phi_{h^{\minus 1}})^{\minus 1} \circ \Phi_{h}(\alpha)\circ \phi_{h^{\minus 1}}).
\end{eqnarray*}
The claim then indeed follows from Example~\ref{ExTheta}. This means, in particular, that $\Lambda\#H\;\cong\; A^{\op}\#H$.

Combining this with Lemma~\ref{imcom} in Subsection~\ref{Evaluation} and Lemma~\ref{LemZeta}\eqref{LemZeta.2}, yields an algebra morphism
$$\cZ^\omega(B)\;\stackrel{\zeta_{T_\bullet}}{\to}\; \zeta_{\theta}(\Lambda)\;\stackrel{\sim}{\to}\; \zeta_\phi(A^{\op})\;\stackrel{\sim}{\to}\;  \cZ^\phi(A).$$
If $F$ is a strong equivalence, then $T_\bullet$ is a tilting module
and Lemma~\ref{PropSumm} implies that this composition is injective. 
The corresponding reasoning for~$F^{\minus 1}$, using Proposition~\ref{InterInv}, 
gives an inclusion in the other direction. Note that $\cZ^\phi(A)$ is a subalgebra of $A\otimes\Bbbk  H$,
$\cZ^\omega(B)$ is a subalgebra of $B\otimes\Bbbk H$ and the above maps respect $H$ in the sense that
they map an element of the form $(a,f)$ to an element of the form $(b,f)$. As both $A$ and $B$ are
finite dimensional, bijectivity of both maps above follows from their injectivity. This completes the proof.
\end{proof}

\subsection{Evaluation}\label{Evaluation}
In this subsection we let $X_\bullet$ be an arbitrary object in $\cD^b(A)$ which admits a $\Phi$-compatible $H$-action~$\psi$.
This means that we can apply Definition~\ref{defXAlg2} to construct an $H$-action~$\theta$ on~$\End_{\cD^b(A)}(X_\bullet)$.

\begin{definition}\label{defzeta}
{\rm 
With $\Lambda:=\End_{\cD^b(A)}(X_\bullet)$, we let
$$\zeta_{X_\bullet}\;:\;\cZ^\phi(A)\,\to\, \Lambda\#H
$$
denote the composition
 $$\cZ^\phi(A)\;\;\tilde\to\;\; \End(\Phi)\;\hookrightarrow\;\End(\Phi_\bullet)\;\stackrel{\Ev^{\Phi}_{X_\bullet}}{\to}\; \End(\Phi_\bullet ;X_\bullet)\;\;\tilde\to\;\; \Lambda\# H.$$
The first isomorphism is Theorem~\ref{ThmZphiFunctor}, the second morphism corresponds to the interpretation of natural transformations between exact functors as natural transformations in the derived category, and the last isomorphism is given by Lemma~\ref{LemAlgX}.
}
\end{definition}

\begin{lemma}\label{imcom}
The image of~$\zeta_{X_\bullet}$ is contained in $\zeta_\theta(\Lambda),$ with $\zeta_\theta(\Lambda)$ as in Lemma~\ref{LemZeta}\eqref{LemZeta.2}.
\end{lemma}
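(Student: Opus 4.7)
The plan is to unwind the composition defining $\zeta_{X_\bullet}$ explicitly, and then verify that an element of the image automatically satisfies the characterising relation of $\zeta_\theta(\Lambda)$ from Lemma~\ref{LemZeta}\eqref{LemZeta.2}. Concretely, I will take an element $(a,h)\in\cZ^\phi(A)$ and track its image. By Theorem~\ref{ThmZphiFunctor} it corresponds to a natural transformation $\eta:\Id\Rightarrow\Phi_{h^{\minus 1}}$. Evaluating at $X_\bullet$ in the derived category yields $\eta_{X_\bullet}\in\Hom_{\cD^b(A)}(X_\bullet,\Phi_{h^{\minus 1}}X_\bullet)$, and Lemma~\ref{LemAlgX} identifies this with the element
$$(\alpha,h)\in\Lambda\#H,\qquad \alpha\,:=\,\Phi_h(\eta_{X_\bullet})\circ\psi_{h^{\minus 1}}.$$
Thus it suffices to show that $\alpha\cdot\theta_h(\beta)=\beta\cdot\alpha$ in $\Lambda$ for every $\beta\in\Lambda$, which is precisely the condition that $(\alpha,h)\in\zeta_\theta(\Lambda)$ by the definition of the multiplication in $\Lambda\#H$.

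Next I will use the compatibility relation $\Phi_{g^{\minus 1}}(\psi_k)\circ\psi_g=\psi_{kg}$ to observe that setting $g=h^{\minus 1}$, $k=h$ gives $\Phi_h(\psi_h)\circ\psi_{h^{\minus 1}}=\psi_\gid=1_{X_\bullet}$. Since each $\psi_g$ is an isomorphism, this forces $\psi_{h^{\minus 1}}$ and $\Phi_h(\psi_h)$ to be mutually inverse, so $\psi_{h^{\minus 1}}\circ\Phi_h(\psi_h)=1_{\Phi_{h}X_\bullet}$ as well. Using the definition $\theta_h(\beta)=\Phi_h(\psi_h\circ\beta)\circ\psi_{h^{\minus 1}}$ and functoriality of $\Phi_h$, one computes
\begin{align*}
\alpha\circ\theta_h(\beta)&=\Phi_h(\eta_{X_\bullet})\circ\psi_{h^{\minus 1}}\circ\Phi_h(\psi_h)\circ\Phi_h(\beta)\circ\psi_{h^{\minus 1}}\\
&=\Phi_h(\eta_{X_\bullet}\circ\beta)\circ\psi_{h^{\minus 1}}.
\end{align*}

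Now I invoke naturality of $\eta:\Id\Rightarrow\Phi_{h^{\minus 1}}$ applied to the morphism $\beta:X_\bullet\to X_\bullet$ in $\cD^b(A)$, which yields $\eta_{X_\bullet}\circ\beta=\Phi_{h^{\minus 1}}(\beta)\circ\eta_{X_\bullet}$. Applying $\Phi_h$ and using $\Phi_h\circ\Phi_{h^{\minus 1}}=\Id$ gives $\Phi_h(\eta_{X_\bullet}\circ\beta)=\beta\circ\Phi_h(\eta_{X_\bullet})$, so
$$\alpha\circ\theta_h(\beta)\;=\;\beta\circ\Phi_h(\eta_{X_\bullet})\circ\psi_{h^{\minus 1}}\;=\;\beta\circ\alpha,$$
which is exactly the desired identity.

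I do not expect any serious obstacle here: the argument is essentially a bookkeeping exercise, since the statement simply reflects the fact that the natural transformation $\eta$ is natural with respect to \emph{all} endomorphisms of $X_\bullet$, not just those coming from $A$. The only mild subtlety is justifying that naturality of $\eta$ on $A$-$\md$ still yields naturality of the induced natural transformation between derived functors when evaluated against morphisms in $\cD^b(A)$, but this is guaranteed by the second arrow in the composition defining $\zeta_{X_\bullet}$, i.e.\ by the standard lift of natural transformations between exact functors to the derived category.
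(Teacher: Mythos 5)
Your proposal is correct and follows essentially the same route as the paper: identify the image element as $(\Phi_h(\eta_{X_\bullet})\circ\psi_{h^{\minus 1}},h)$ via Lemma~\ref{LemAlgX}, insert $1=\psi_{h^{\minus 1}}\circ\Phi_h(\psi_h)$, and use naturality of $\eta$ against $\beta\in\Lambda$ (lifted to the derived category) to obtain $\beta\circ f=f\circ\theta_h(\beta)$. The only cosmetic difference is that the paper phrases the computation for an arbitrary element of $\End(\Phi_\bullet)$ rather than one coming from $\cZ^\phi(A)$, but the calculation is identical.
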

\begin{proof}
We prove the more general statement that the image of the composition 
$$\mu:\;\End(\Phi_\bullet)\;\to\; \End(\Phi_\bullet; X_\bullet)\;\;\tilde\to\;\; \Lambda\# H$$
is contained in $\zeta_\theta(\Lambda).$
For a natural transformation~$\eta: \Id\Rightarrow \Phi_{h^{\minus 1}}$, we have
$\mu(\eta)=(\Phi_h(\eta_{X_\bullet})\circ \psi_{h^{\minus 1}},h),$ by Lemma~\ref{LemAlgX}.
For the natural transformation~$\Phi_h(\eta):\Phi_h\Rightarrow \Id$
and any morphism $\beta\in \End(X_\bullet)$, we have
$$\beta\circ \Phi_h(\eta_{X_\bullet})\;=\;\Phi_h(\eta_{X_\bullet})\circ \Phi_h(\beta),$$
We set $f:=\Phi_h(\eta_{X_\bullet})\circ \psi_{h^{\minus 1}}$ and use $1_{\Phi_h X_\bullet}=\psi_{h^{\minus 1}}\circ\Phi_h(\psi_h)$ to calculate
$$\label{eqqeeq}\beta\circ f\;=\;\Phi_h(\eta_{X_\bullet})\circ\psi_{h^{\minus 1}}\circ\Phi(\psi_{h})\circ \Phi_h(\beta)\circ\psi_{h^{\minus 1}}\;=\;f\circ \theta_h(\beta).$$
The above implies that the image of~$\mu$ is indeed contained in $\zeta_\theta(\Lambda).$
\end{proof}

\begin{lemma}\label{PropSumm}
For any tilting module $T$ over $A$, considered as an object in $\cD^b(A)$ which admits a $\Phi$-compatible $H$-action~$\psi$, the morphism $\zeta_T$ is injective.
\end{lemma}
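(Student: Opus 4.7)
The plan is to decompose $\zeta_T$ into its four constituent maps and reduce injectivity to a vanishing argument in the derived category. By Theorem~\ref{ThmZphiFunctor}, the first map $\cZ^\phi(A)\stackrel{\sim}{\to}\End(\Phi)$ is an isomorphism, and by Lemma~\ref{LemAlgX} the last map $\End(\Phi_\bullet;T)\stackrel{\sim}{\to}\Lambda\# H$ is an isomorphism. The middle inclusion $\End(\Phi)\hookrightarrow\End(\Phi_\bullet)$ is injective because any exact endofunctor of $A$-mod extends termwise to a triangulated endofunctor of $\cD^b(A)$, and a natural transformation of the original functors extends termwise to a natural transformation of the extensions whose evaluation on any $A$-module placed in degree zero recovers the original. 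Hence injectivity of $\zeta_T$ reduces to injectivity of $\Ev^\Phi_T$ on the image of $\End(\Phi)$ inside $\End(\Phi_\bullet)$.

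Since $\Ev^\Phi_T$ respects the $H$-grading on source and target, it suffices to argue one graded component at a time. Fix $h\in H$ and consider an individual $\eta\colon\Id\Rightarrow\Phi_{h^{\minus 1}}$ with derived lift $\eta_\bullet$. I would introduce the vanishing locus
$$\mathcal{V}_\eta\;:=\;\{X\in\cD^b(A)\,:\,(\eta_\bullet)_X=0\}.$$
The key technical step is to establish that $\mathcal{V}_\eta$ is a thick triangulated subcategory of $\cD^b(A)$. Closure under shifts is automatic since $\eta_\bullet$ is a natural transformation between triangulated functors; closure under direct summands follows from naturality of $\eta_\bullet$ with respect to the summand inclusion and projection; and closure under cones uses a standard diagram chase: given a distinguished triangle $X\to Y\to Z\to X[1]$ with $\eta_X=\eta_Y=0$, the naturality squares for $\eta_\bullet$ together with $\eta_{X[1]}=\eta_X[1]=0$ force $\eta_Z=0$.

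By the classical characterisation of tilting modules (due to Rickard), the smallest thick triangulated subcategory of $\cD^b(A)$ containing $T$ equals $\cD^b(A)$. The hypothesis that $\Ev^\Phi_T$ sends $\eta_\bullet$ to zero places $T$ into $\mathcal{V}_\eta$, so thickness forces $\mathcal{V}_\eta=\cD^b(A)$. In particular $A\in\mathcal{V}_\eta$, whence $\eta_A=0$, and then Lemma~\ref{LemImEv} (which identifies $\End(\Phi)$ with a subalgebra of $A^{\op}\# H$ via evaluation at $A$) forces $\eta=0$ in $\End(\Phi)$. This yields the injectivity of $\zeta_T$.

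The main obstacle I anticipate is the closure of $\mathcal{V}_\eta$ under cones. Although this is folklore, the argument is not purely formal: one must carefully combine the two naturality squares of $\eta_\bullet$ along a distinguished triangle $X\to Y\to Z\to X[1]$ with the vanishing on the shift $\eta_{X[1]}=0$, and extract $\eta_Z=0$ from the resulting commutative diagram (typically via the long exact sequences obtained by applying $\Hom_{\cD^b(A)}(-,\Phi_{h^{\minus 1},\bullet}Z)$ to one of the triangles). An alternative route would be to transport the evaluation across the tilting equivalence $\mathbf{R}\Hom_A(T,-)\colon\cD^b(A)\stackrel{\sim}{\to}\cD^b(\Lambda^{\op})$, under which $T$ corresponds to the projective generator ${}_\Lambda\Lambda$; however, tracking the induced $H$-action on the Morita side is less transparent, so the thickness route seems more direct.
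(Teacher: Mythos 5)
There is a genuine gap, and it sits exactly where you anticipated: the claim that the vanishing locus $\mathcal{V}_\eta=\{X\in\cD^b(A):(\eta_\bullet)_X=0\}$ is closed under cones is false in general, and the paper itself contains an explicit counterexample. In Appendix~\ref{AppB2} the authors exhibit a hereditary algebra $A$, a tilting complex $T_\bullet$ (which, being a tilting complex, generates $\cD^b(A)$ as a thick triangulated subcategory, just as in your argument), and a nonzero natural transformation $\eta$ between exact functors with $\eta_{T_\bullet}=0$ but $\eta_{P_1}\neq 0$. So $\mathcal{V}_\eta$ contains $T_\bullet$ and all shifts and summands thereof, yet is not all of $\cD^b(A)$; hence it cannot be thick. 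The diagram chase you sketch only yields factorisations: from the naturality square over $Z\to X[1]$ one gets that $\eta_Z$ factors through $F^2_\bullet Y\to F^2_\bullet Z$, and from the square over $Y\to Z$ that it factors through $F^1_\bullet Z\to F^1_\bullet X[1]$; neither, nor their combination, forces $\eta_Z=0$. This is the usual non-functoriality of cones, and it is precisely the obstruction that makes the tilting \emph{module} hypothesis (rather than tilting complex) essential in Lemma~\ref{PropSumm}. Your argument, if it worked, would prove the statement for arbitrary tilting complexes, contradicting Appendix~\ref{AppB}.

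The paper's actual proof avoids the triangulated category entirely. Since $T$ is a module, every $M\in A$-mod is a subquotient of a finite direct sum of copies of $T$ (take the degree-zero term of a bounded $\add(T)$-coresolution of $M$). For \emph{exact} endofunctors $F^1,F^2$ of the abelian category and $\eta\in\Nat(F^1,F^2)$ with $\eta_T=0$, naturality along a monomorphism $N\hookrightarrow T^{\oplus n}$ together with left exactness of $F^2$ gives $\eta_N=0$, and naturality along an epimorphism $N\twoheadrightarrow M$ together with right exactness of $F^1$ gives $\eta_M=0$; this is Lemma~\ref{LemAbstract}, specialised to $S=\{T\}$ in Lemma~\ref{LemEvT}(i). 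Since the natural transformations in the image of $\End(\Phi)\hookrightarrow\End(\Phi_\bullet)$ come from the abelian level and $T$ sits in a single degree, their evaluation at $T$ in $\cD^b(A)$ is just the module-level evaluation, so $\Ev^{\Phi}_T$ is injective on that image, and the remaining maps in Definition~\ref{defzeta} are isomorphisms or injections. If you want to salvage a derived-category formulation, the correct closure properties of $\mathcal{V}_\eta$ are under shifts and direct summands only; to get from $T$ to $A$ you must pass through the abelian structure, not through cones.
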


\begin{proof}
Lemma~\ref{LemEvT}(i) implies that $\Ev^{\Phi}_T$ is injective.
Since all other morphisms in the composition in Definition~\ref{defzeta} are injective by definition, the statement follows.
\end{proof}

%%%%%%%%%%%%%%%%%%%%%%%%%%%%%%%%%%%%%%%%%%%%%%%%%%%%%%%%%%%%%%%%%%%%%%%%%%%%%%%%%
\section{Gradings}\label{Gradings}

We fix an abelian group $G\in \Ab$ for the rest of the paper. As $G$ will be used to define gradings, we adopt the convention to denote its operation by $+$, the identity element by $0$ and the inverse of~$g\in G$ by $-g$.

\subsection{$G$-graded algebras and modules}

\subsubsection{Graded vector spaces}\label{IntroV}
For the group $G$, we introduce the category $\vecc^G_{\mk}$. Its objects are $\mk$-vector spaces $V$ equipped with a $G$-grading, 
$$V=\bigoplus_{g\in G}V_g.$$ The morphisms are those respecting the grading, {\it i.e.} homogeneous $\mk$-linear maps of degree 0. For any $G$-graded $\mk$-vector space~$V$, we write $\partial(v)=g$ for~$v\in V_g$. Whenever $\partial$ is used, we assume that the element on which it acts is homogeneous. %For two $G$-graded vector spaces $V$ and~$W$, their direct sum is $G$-graded with $(V\oplus W)_g=V_g\oplus W_g$.

For any $g\in G$ and a $G$-graded vector space $V$, we define the $G$-graded vector space $\Pi_gV$, which coincides with  $V$ as an ungraded vector space, but with grading given by $(\Pi_g V)_h=V_{h+g}$. For any $v\in V$, we use the notation~$\Pi_g v$ for the element in $\Pi_g V$ identified with $v$ through the equalities $(\Pi_g V)_h=V_{h+g}$. In particular,  \begin{equation}\label{eqConv}v\in V_k \quad\mbox{ implies that }\quad \Pi_g(v) \in  (\Pi_g V)_{k-g}. \end{equation}
In other words, we have $\partial(\Pi_g v)=\partial(v)-g.$

We will interpret $\Pi_g$ as an endofunctor of~$\vecc^G_{\mk}$, defined on a morphism $f:V\to W$ as $\Pi_g(f)(\Pi_g v)=\Pi_g f(v)$, for any $v\in V$. In particular,~$\Pi_0= \Id$ and~$\Pi_{g_1}\Pi_{g_2}= \Pi_{g_1+g_2}$, so the functors $\{\Pi_g\;|\; g\in G\}$ form a group isomorphic to~$G$ and~$\Pi$ is a strict categorical $G$-action on~$\mk$-gmod, in the sense of~\ref{StrictSec}.

\subsubsection{Graded algebras}\label{IntroGA}

A $G$-graded algebra $A$ is a $\mk$-algebra,~$G$-graded as a vector space, such that $A_gA_h\subset A_{g+h}$, for~$g,h\in G$. It follows immediately that $1\in A_0$.
A $G$-graded $A$-module is a $G$-graded $\mk$-vector space $V=\oplus_{g\in G}V_g$ such that the action of~$A$ satisfies $A_gV_h\subset V_{h+g}$. If $A$ is finite dimensional, we define the category $A$-gmod as the category of finite dimensional $G$-graded $A$-modules with morphisms being $A$-linear morphisms of~$G$-graded vector spaces. For $\mk$ as a $G$-graded $\mk$-algebra concentrated in degree zero, $\mk$-gmod is equivalent to~$\vecc_{\mk}^G$. 
Morphism spaces in the category $A$-gmod will be denoted by $\hom_A$.

For any $g\in G$, the functor $\Pi_g$ of \ref{IntroV} induces an endofunctor of~$A$-gmod. Clearly, $\Pi$ yields a strict $G$-action on~$A$-gmod in the sense of~\ref{StrictSec}. The algebras $\End(\Pi;X)$ and $\End(\Pi)$ as in \ref{StrictSec} are then naturally $G$-graded, where for instance $\End(\Pi)_g=\Nat(\Id,\Pi_g)$.

We denote the exact functor forgetting the $G$-grading by
$$F^{\gggg}:\;A\mbox{-gmod}\;\to\; A\mbox{-mod.}$$
When non-essential, we will sometimes leave out reference to this forgetful functor. 
We also identify $F^{\gggg}M$ and $F^{\gggg}\Pi_g M$, for a $G$-graded module $M$ and any $g\in G$.

%For an algebra $B$ with free $G$-action~$\kappa$ as in \ref{ActAlg}, we define a $G$-grading on the algebra of invariants $B^G$. For this, we set
%\begin{equation}\label{gradinv}(B^G)_g:= e_{g} B e_0,\end{equation}
%where the right-hand side is to be interpreted via the isomorphism $B^G\cong Be_0$ of Lemma~\ref{BeG}.

%By~\ref{StrictSec}, the algebra $\eend_A(\Pi A)$ admits a free $G$-action.
%Hence the corresponding algebra of invariants is $G$-graded by convention~\eqref{gradinv}.

\begin{lemma}\label{LemPiA}
We have an isomorphism of~$G$-graded algebras
$$\End(\Pi; A)=\bigoplus_{g\in G}\hom_A(A,\Pi_g A)\;\;\tilde\to\; \;A^{\op},$$
where $\alpha\in\hom_A(A,\Pi_g A)$ is mapped to~$\Pi_{\minus g}\alpha(1)$.
\end{lemma}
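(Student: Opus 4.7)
The plan is to adapt the proof of Lemma~\ref{LemApH} to the graded setting without any group action, paying careful attention to the grading shift conventions in \eqref{eqConv}.

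First, I would verify bijectivity on each graded component. A morphism $\alpha \in \hom_A(A, \Pi_g A)$ is a degree-$0$ $A$-linear map, hence is uniquely determined by $\alpha(1) \in (\Pi_g A)_0 = A_g$ via $\alpha(x) = x \cdot \alpha(1)$, and every element of $A_g$ arises in this way. By \eqref{eqConv}, the element $\alpha(1) \in (\Pi_g A)_0$ corresponds to $\Pi_{-g}\alpha(1) \in (\Pi_{-g}\Pi_g A)_g = A_g$. Summing over $g \in G$, the proposed map is therefore a $G$-graded $\mk$-vector space isomorphism onto $A$, with inverse sending $a \in A_g$ to the morphism $x \mapsto \Pi_g(xa)$.

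Second, I would verify that the map is multiplicative, with codomain $A^{\op}$. Take $\alpha \in \hom_A(A, \Pi_h A)$ with $a := \alpha(1) \in A_h$ and $\beta \in \hom_A(A, \Pi_k A)$ with $b := \beta(1) \in A_k$. Their product in $\End(\Pi; A)$ is $\Pi_k(\alpha)\circ\beta \in \hom_A(A, \Pi_{k+h}A)$. Since $\Pi_k$ acts as the identity on the underlying set of morphisms, a direct computation gives $(\Pi_k(\alpha)\circ\beta)(1) = \alpha(b) = ba$, viewed as the underlying element of $(\Pi_{k+h}A)_0 = A_{k+h}$. Applying our map yields the underlying element $ba \in A_{k+h}$, whereas the individual images of $\alpha$ and $\beta$ have underlying elements $a$ and $b$ respectively. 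Since the product of $a$ and $b$ taken in this order in $A^{\op}$ is precisely $b \cdot a = ba$ in $A$, the map is an algebra homomorphism $\End(\Pi; A) \to A^{\op}$. The unit $\Id_A \in \hom_A(A, \Pi_0 A)$ maps to $1_A$, so unitality is also preserved.

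The only delicate aspect is bookkeeping with the sign convention in \eqref{eqConv}, which forces the extra $\Pi_{-g}$ in the definition of the map; once this is handled, the argument is entirely parallel to the ungraded version in Lemma~\ref{LemApH} and no substantial obstacle arises.
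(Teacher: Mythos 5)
Your proof is correct and follows essentially the same route as the paper's: identify $\hom_A(A,\Pi_g A)$ with $A_g$ via evaluation at $1$ (keeping track of the shift convention \eqref{eqConv}), then check that the product $\Pi_k(\alpha)\circ\beta$ evaluates at $1$ to (the shift of) $ba$, matching multiplication in $A^{\op}$. The paper's argument is just a terser version of the same computation.
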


\begin{proof}
For $\alpha\in\hom_A(A,\Pi_g A)$, we have $\alpha(1)=\Pi_ga$, for some $a\in A_g$.
The described map is thus an isomorphism of~$G$-graded vector spaces. 
Further, for $\alpha\in\hom_A(A,\Pi_g A)$ and $\beta\in\hom_A(A,\Pi_h A)$, their product is
$\alpha\beta=\Pi_h(\alpha)\circ\beta$. Since we have $\Pi_h(\alpha)\circ\beta(1)=\Pi_{g+h}ba$ with $a=\Pi_{\minus g}\alpha(1)$ and $b=\Pi_{\minus h}\beta(1)$, this concludes the proof.
\end{proof}

More generally, we have the following result, which is proved similarly. Set $\cD^{\gggg}:=\cD^b(A\mbox{\rm-gmod})$ and $\cD:=\cD^b(A\mbox{\rm-mod})$.
\begin{lemma}\label{LemFX}
For any $Y_\bullet\in \cD^{\gggg}$, with $\Lambda:=\End_{\cD}(F^{\gggg}Y_\bullet)$, the forgetful functor $F^{\gggg}$ induces an algebra isomorphism
$$\End(\Pi; Y_\bullet)\;\;\tilde\to\;\; \Lambda.$$
\end{lemma}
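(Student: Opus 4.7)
The plan is to check, in turn, that the forgetful functor $F^{\gggg}$ induces a well-defined additive map $\End(\Pi;Y_\bullet)\to\Lambda$, that this map is an algebra morphism, and that it is bijective. The first two points are essentially formal. Since $F^{\gggg}\circ\Pi_g=F^{\gggg}$, any $\alpha\in\Hom_{\cD^{\gggg}}(Y_\bullet,\Pi_g Y_\bullet)$ is sent by $F^{\gggg}$ to an element of $\Hom_{\cD}(F^{\gggg}Y_\bullet,F^{\gggg}Y_\bullet)=\Lambda$, so summing over $g$ gives the candidate map. For multiplicativity, recall from Subsection~\ref{StrictSec} that the product of $\alpha\in\Hom(Y_\bullet,\Pi_h Y_\bullet)$ and $\beta\in\Hom(Y_\bullet,\Pi_k Y_\bullet)$ inside $\End(\Pi;Y_\bullet)$ equals $\Pi_k(\alpha)\circ\beta$; applying $F^{\gggg}$ and using $F^{\gggg}\circ\Pi_k=F^{\gggg}$ turns this into $F^{\gggg}(\alpha)\circ F^{\gggg}(\beta)$, which is ordinary composition in $\Lambda$.

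The real content is the bijection
$$\bigoplus_{g\in G}\Hom_{\cD^{\gggg}}(Y_\bullet,\Pi_g Y_\bullet)\;\;\tilde\to\;\;\Hom_{\cD}(F^{\gggg}Y_\bullet,F^{\gggg}Y_\bullet).$$
I would prove this by replacing $Y_\bullet$ with a quasi-isomorphic bounded-above complex $P_\bullet$ of finitely generated graded projective $A$-modules, so that derived Homs from $P_\bullet$ into the bounded complex $\Pi_g Y_\bullet$, respectively $F^{\gggg}Y_\bullet$, coincide with homotopy classes of chain maps. Because $Y_\bullet$ is bounded, only finitely many components of $P_\bullet$ contribute in each cochain degree, so the relevant products are finite and commute with direct sums. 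The key degreewise identity
$$\Hom_A(F^{\gggg}P_n,F^{\gggg}Y_m)\;=\;\bigoplus_{g\in G}\hom_A(P_n,\Pi_g Y_m),$$
which holds by finite generation of $P_n$ via the same splitting-into-homogeneous-components argument as in Lemma~\ref{LemPiA}, then identifies the total Hom complex with $\bigoplus_g \hom^\bullet(P_\bullet,\Pi_g Y_\bullet)$. Since each $\Pi_g$ is exact, the differential of this total complex respects the $G$-grading, so the decomposition descends to cohomology and yields the required bijection.

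The main step requiring care is precisely this last passage: one must check that the cochain-level $G$-decomposition is preserved by both the differentials and any chain homotopies, so that the grading survives when taking $H^0$. This is a direct consequence of the exactness of the functors $\Pi_g$ and the naturality of the degreewise splitting in both variables. Beyond this standard piece of bookkeeping about graded projective resolutions in $\cD^b$, I do not anticipate any serious obstacle.
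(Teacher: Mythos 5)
Your argument is correct. The paper gives no explicit proof of this lemma beyond the remark that it is ``proved similarly'' to Lemma~\ref{LemPiA}, and your resolution-based bookkeeping is exactly the standard way to carry that out: it reduces to the same degreewise splitting of morphism spaces into homogeneous components (using finite-dimensionality), with the only implicit step worth stating being that graded projectives forget to projectives, so that $F^{\gggg}P_\bullet$ really does compute $\Hom_{\cD}$ on the ungraded side.
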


This lemma thus allows us to equip any endomorphism algebra $\Lambda$ of a gradable object $Y_\bullet$ in $\cD$  with a $G$-grading, where
\begin{equation}\label{GradLambda}\Lambda_g\;\cong\; \Hom_{\cD^{\gggg}}(Y_\bullet, \Pi_gY_\bullet).\end{equation}

\subsubsection{Conventions for gradings}\label{GradConv}
We maintain some conventions for gradings throughout the paper.
\begin{enumerate}[(A)]
\item For two $G$-graded algebras $A,B$, the product $A\otimes B$ is naturally graded, with
$$(A\otimes B)_g\;=\;\bigoplus_{k\in G} A_k\otimes B_{g-k}.$$ 
\item We interpret an ungraded algebra $A$ as graded and concentrated in degree $0$.
\item For an abelian group $H$, the algebra $\mk H$ is $H$-graded, where $(\mk H)_h=\mk h$.
\end{enumerate}

\begin{remark}\label{RemConv}{\rm
Consider $H\in \Ab$ and $A\in \Alg$.
\begin{enumerate}
\item The algebra $A\otimes\mk H$ is $H$-graded using the above conventions.
\item If $A$ is $G$-graded, both $A\otimes \mk H$ and~$A\otimes \mk^H$ are $G$-graded algebras using the above conventions.
%\item With above conventions, the algebra isomorphism of Lemma~\ref{LemAGAG} is, in fact, a $G$-graded algebra isomorphism.

%\item For $G$ regarded as a set, we have the algebra $\Mat_G$ of Section~\ref{NotConv}. We consider the $G$-grading on~$\Mat_G$ where $\partial(E_{gl})=g-l$. \footnote{contained in degree zero?}
%\item For two $G$-graded algebras $A$ and~$B$, the algebra $A\otimes B$ is $G$-graded by $(A\otimes B)_g=\oplus_h A_{g-h}\otimes B_h$.
\end{enumerate}}

\end{remark}

\subsection{The character group $\hat{G}$ of $G$}

%\subsubsection{Pontryagin dual}

Denote by $\hat{G}\in \Ab$ the $\mk$-character group
$$\hat{G}\;:=\;\Hom_{\Ab}(G,\mk^\times),$$
where multiplication is point-wise. We have a natural group homomorphism
\begin{equation}\label{dualdual}G\mapsto \hat{\hat{G}},\quad g\mapsto \alpha_g, \quad\mbox{with $\alpha_g(\chi)=\chi(g)$, for all $\chi\in \hat{G}$.}\end{equation}

\begin{example}
Assume that $G$ is finite. {\rm It follows that the image of a homomorphism in $\Hom_{\Ab}(G,\mk^\times)$ consists of $|G|$-th roots of unity. }
Assume that $|G|$ is not divisible by $\charr(\mk)$. {\rm This implies all the $|G|$-th roots are different. We thus have
$$\hat{G}=\Hom_{\Ab}(G,\mk^\times)\;\cong\; \Hom_{\Ab}(G,\mC^\times)\;\cong\; \Hom_{\Ab}(G,\mT),$$
where $\mT\cong\mR/\mZ$ is the group of complex numbers of modulus 1. 
In particular, we can identify $\hat{G}$ with the character group in the usual sense, and also with the Pontryagin dual of~$G$ as a locally compact abelian group. In particular, $\hat{G}$ is non-canonically isomorphic to~$G$ and we have orthogonality relations
\begin{equation}\label{orthoR}
\sum_{\chi\in\hat{G}}\chi(g)\chi(-h)\;=\; |G|\delta_{g,h}\quad\mbox{and}\quad \sum_{g\in G}\chi(g)\chi'(-g)\;=\; |G|\delta_{\chi,\chi'}.
\end{equation}
In this case, the group homomorphism in Equation~\eqref{dualdual} is the identity.
}
\end{example}

\begin{example}
Assume that $G=\mZ$. {\rm  We have $\hat{G}=\mG_m=\mk^\times$, the multiplicative group of~$\mk$. In general, this is different from the Pontryagin dual
$$\Hom_{\Ab}(G,\mT)\;\cong\;\mT$$
of $G=\mZ$ as a locally compact abelian group.}
\end{example}

%Recall the algebras $A^{\oplus G}$ and $A\otimes\mk\hat{G}$ as introduced in Section~\ref{NotConv}.
\begin{lemma}\label{LemAGAG}
The algebra morphism 
$\mk\hat{G}\;\to\;\mk^G$ given by interpreting characters as elements of~$\Hom_{\Set}( G, \mk)$ is injective 
 and
 an isomorphism if $|G|$ is finite and is not divisible by $\charr(\mk)$.
\end{lemma}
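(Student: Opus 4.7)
The plan is to first verify that the map $\Psi:\mk\hat{G}\to\mk^G$ is genuinely a morphism of algebras, then establish injectivity by invoking the classical linear independence of characters, and finally obtain the isomorphism statement by a dimension count under the finiteness hypothesis.

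First, I would unpack the definitions. On the natural $\mk$-basis $\hat{G}$ of $\mk\hat{G}$, $\Psi$ sends $\chi$ to the function $g\mapsto\chi(g)$. Since $\Psi(\chi\chi')(g)=(\chi\chi')(g)=\chi(g)\chi'(g)=\bigl(\Psi(\chi)\cdot\Psi(\chi')\bigr)(g)$, where the product on the right is pointwise (which is the multiplication in $\mk^G=\Hom_{\Set}(G,\mk)$), and since the trivial character goes to the constant function $1$, $\Psi$ is indeed a $\mk$-algebra morphism. I would record this in one line.

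For injectivity, suppose $\sum_{\chi\in\hat{G}}a_\chi\chi$ lies in $\ker\Psi$, i.e.\ $\sum_{\chi}a_\chi\chi(g)=0$ for every $g\in G$. This is exactly the statement that a finite collection of distinct group homomorphisms $G\to\mk^\times$ is $\mk$-linearly dependent as $\mk$-valued functions on $G$, which is ruled out by the classical Dedekind--Artin theorem on linear independence of characters. The short self-contained argument is by induction on the length of a minimal non-trivial relation: given such a relation $\sum_{i=1}^n a_i\chi_i=0$ with all $a_i\neq 0$ and $n\geq 2$, pick $h\in G$ with $\chi_1(h)\neq\chi_2(h)$, evaluate the relation at $gh$ for arbitrary $g$, divide by $\chi_1(h)$, and subtract from the original relation evaluated at $g$, producing a shorter non-trivial relation; contradiction. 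Hence $\Psi$ is injective.

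For the second statement, when $|G|$ is finite and coprime to $\charr(\mk)$, the example immediately preceding the lemma yields $\hat{G}\cong G$, so $\dim_{\mk}\mk\hat{G}=|\hat{G}|=|G|=\dim_{\mk}\mk^G$. An injective $\mk$-linear map between finite-dimensional $\mk$-vector spaces of the same dimension is bijective, so $\Psi$ is an algebra isomorphism. The main obstacle, if any, is merely citing or sketching the linear independence of characters cleanly; no further computation is required.
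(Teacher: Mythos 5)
Your proof is correct. The first two steps (checking that $\Psi$ is an algebra morphism, and deducing injectivity from Dedekind's linear independence of characters) coincide with the paper's argument, which cites the same classical result. Where you diverge is the surjectivity step: you argue by a dimension count, using the non-canonical isomorphism $\hat{G}\cong G$ from the preceding example to get $\dim_{\mk}\mk\hat{G}=|\hat{G}|=|G|=\dim_{\mk}\mk^G$, so that the injective map must be bijective. The paper instead writes down an explicit inverse, the Fourier inversion map
$$f\;\mapsto\; \frac{1}{|G|}\sum_{\eta\in\hat{G}}\Bigl(\sum_{l\in G}\eta(-l)f(l)\Bigr)\eta,$$
verified via the orthogonality relations~\eqref{orthoR}. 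Your route is shorter and avoids any computation, but the paper's choice is not incidental: the explicit inverse~\eqref{eqInver} is reused later (in the proof of Proposition~\ref{PropphiG}, to check that $\cZ^G(A)$ is carried back into $\cZ^{\phi}(A)$), so having the formula in hand pays off downstream, whereas a pure existence argument would not. Both approaches are fully valid proofs of the lemma as stated.
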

\begin{proof}
We have an injective morphisms of monoids
$$\hat{G}\;=\;\Hom_{\Ab}(G,\mk^\times)\;\hookrightarrow\;\Hom_{\Set}(G,\mk)\;=\;\mk^G,$$
which thus leads to an algebra morphism $\mk\hat{G}\;\to\;\mk^G$. This morphism is injective by Dedekind's result on linear independence of characters, see e.g. \cite[Proposition~4.30]{Rotman}.

Now assume that $|G|$ is finite and is not divisible by $\charr(\mk)$. The map
\begin{equation}\label{eqInver}\mk^{ G}\;\to\; \mk\hat{G};\quad f\;\mapsto\; \frac{1}{|G|}\sum_{\eta\in\hat{G}}\left(\sum_{l\in G}\eta(-l)f(l)\right)\eta\end{equation}
is an inverse, as follows from a direct computation using Equations~\eqref{orthoR}.
\end{proof}

%For a finite dimensional algebra $A$, the isomorphism in Lemma~\ref{LemAGAG} clearly leads to a commuting triangle of algebra morphisms
%\begin{equation}\label{eqTr}\xymatrix{A\otimes \mk\hat{G}\ar[rr]^{\sim}\ar@{->>}[dr]&&A\otimes\mk^G\ar@{->>}[dl]\\
%& A,
%}\end{equation}
%where the two downward arrows are the forgetful and the counit morphisms as introduced in Section~\ref{NotConv}.

\subsection{Actions versus gradings}\label{AlgAct}
For $V\in \vecc_{\mk}^G$ and $\chi\in \hat{G}$, we define $\psi_{\chi}\in\End_{\mk}(V)$ by $\psi_\chi(v)=\chi(\partial v)v$. It follows that $(V,\psi)\in \Rep_{\mk}\hat{G}$.

\begin{proposition}\label{fffun}
${}$
\begin{enumerate}[(i)]
\item Interpreting $V\in \vecc_{\mk}^G$ as an element of~$\Rep_{\mk}\hat{G}$ as above yields a faithful functor 
$$\Xi\;:\;\;\vecc_{\mk}^G\;\to\; \Rep_{\mk}\hat{G},\quad V\mapsto (V,\psi).$$
\item If $V\in\vecc^G_{\mk}$ is a $G$-graded algebra, then $\psi$ is an $H$-action on the algebra $V$.
\item If $A$ is a $G$-graded algebra and $V\in \vecc^G_{\mk}$ is a graded $A$-module, then the actions on $\Xi(A)$ and $\Xi(V)$ are compatible. 
\end{enumerate}
\end{proposition}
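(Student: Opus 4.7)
The plan is to verify all three parts by checking the assertions on homogeneous elements and then extending $\mk$-linearly, since by construction $\psi_\chi$ is diagonal in the grading decomposition. The only algebraic ingredients needed are the two basic identities $\chi(g+h) = \chi(g)\chi(h)$ (valid because $\chi \in \hat{G} = \Hom_{\Ab}(G,\mk^\times)$) and $\chi(0) = 1$, combined with the definitional properties $\partial(ab) = \partial a + \partial b$ of the various gradings and the fact that morphisms in $\vecc_\mk^G$ preserve degree.

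For part (i), I would first check that for each $V$, the assignment $\chi \mapsto \psi_\chi$ is a group homomorphism $\hat{G} \to \Aut_\mk(V)$: on $v \in V_g$, both $\psi_{\chi\chi'}(v)$ and $\psi_\chi \psi_{\chi'}(v)$ evaluate to $\chi(g)\chi'(g) v$, while $\psi_1(v) = v$, and the inverse of $\psi_\chi$ is $\psi_{\chi^{-1}}$. Next I would check functoriality: given $f: V \to W$ in $\vecc_\mk^G$, the equality $f(V_g) \subseteq W_g$ forces $\psi^W_\chi \circ f$ and $f \circ \psi^V_\chi$ to agree on each $V_g$ (both multiply by $\chi(g)$). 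Faithfulness is immediate because $\Xi$ is the identity on underlying linear maps.

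For parts (ii) and (iii), the same calculation does both: for homogeneous elements of degree $g$ and $h$ whose product has degree $g+h$, applying $\psi_\chi$ to the product yields $\chi(g+h)$ times the product, which factors as $\chi(g)\chi(h)$ and hence as $\psi_\chi$ applied factorwise. Specialising to the multiplication $A \otimes A \to A$ gives part (ii) (together with $\psi_\chi(1) = \chi(0) \cdot 1 = 1$), and specialising to the action map $A \otimes V \to V$ gives the compatibility condition $\psi^V_\chi(av) = \psi^A_\chi(a) \psi^V_\chi(v)$ of \ref{ActAlg} required for part (iii).

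No step presents a genuine obstacle; the content of the proposition is really that a $G$-grading on $V$ is exactly the eigenspace decomposition of $V$ for the canonical $\hat{G}$-action, and the compatibility axioms translate across this dictionary without loss. The functor $\Xi$ fails to be full in general (for instance when $G = \mZ$ and $\mk = \mC$, since non-polynomial $\hat{G}$-equivariant maps exist), but this is not needed here.
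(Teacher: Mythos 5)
Your verification is correct, and it is precisely the direct computation the paper relies on: the proposition is stated without proof, the intended argument being exactly the check on homogeneous elements via $\chi(g+h)=\chi(g)\chi(h)$ and $\chi(0)=1$ that you carry out. One correction to your closing aside, though: for $G=\mZ$ and $\mk=\mC$ the functor $\Xi$ \emph{is} full — a linear map commuting with $\psi_\lambda$ for every $\lambda\in\mk^\times$ has components $V_n\to W_m$ satisfying $\lambda^{n-m}=1$ for all $\lambda$, which over an infinite field forces $n=m$ — and the paper's remark following Lemma~\ref{LemGfin} records exactly this (fully faithful but not dense for $G=\mZ$, $\charr(\mk)=0$); the paper's example of failure of fullness is instead $G=\mZ_2$ with $\charr\mk=2$, where $\hat{G}$ is trivial.
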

%\begin{proof}
%For $V,W\in \vecc^G_{\mk}$, it is easily checked that a $\mk$-linear morphism respects the grading if and only if it is a morphism of $\hat{G}$-representations.
%\end{proof}

%For a vector space $V$, or an algebra $A$, we have a natural correspondence between $G$-gradings and~$\hat{G}$-actions. The theory of graded algebras is thus equivalent to the theory of commutative group actions on algebras, when $|G|$ is not divisible by $\charr(\mk)$. 

%\begin{lemma}\label{GradAct}
%Let $V$ be a finite dimensional $\mk$-vector space.
%\begin{enumerate}[$($i$)$]
%\item\label{GradAct.1} If $V$ is $G$-graded, define $\psi_\chi\in\End_{\mk}(V)$ by $\psi_\chi(v)=\chi(\partial v)v$. Then $\psi$ is a $\hat{G}$-action on~$V$.
%\item\label{GradAct.2} If $V$ has a $\hat{G}$-action~$\psi$, we define
%$$V_g :=\{v\in V\,|\, \psi_\chi(v)=\chi(g)v\,,\,\mbox{ for all } \chi\in \hat{G}\}.$$
%Then $V=\bigoplus_{g\in G}V_g$ is a $G$-graded vector space.
%\end{enumerate}
%The two procedures are each others' inverses.

%\end{lemma}

%\begin{proof}
%Statements \eqref{GradAct.1} and \eqref{GradAct.2} are immediate consequences 
%of the orthogonality relations~\eqref{orthoR}. The other claims follow from direct computations.
%\end{proof}

For $V\in \vecc^G_{\mk}$ , we simply write $v_\chi$, for~$\psi_\chi(v)=\chi(\partial v)v$. The lemma thus implies, in particular, that, for a $G$-graded algebra $A$, we have
a group homomorphism
\begin{equation}\label{GAut}\phi:\hat{G}\to \Aut(A),\quad \phi_\chi(a)=a_\chi,\qquad\mbox{for all $\chi\in\hat{G}$ and $a\in A$.}\end{equation}

\begin{lemma}\label{LemGfin}
When $|G|$ is finite and not divisible by $\charr(\mk)$, $\Xi$ in Proposition~\ref{fffun}(i) is an equivalence of categories, which restricts to an equivalence between $G$-graded algebras and algebras with $\hat{G}$-action.
\end{lemma}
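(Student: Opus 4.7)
The plan is to construct an explicit quasi-inverse to $\Xi$. Given $(V,\psi)\in\Rep_{\mk}\hat{G}$, the hypothesis $\charr(\mk)\nmid|G|$ allows one to define the orthogonal idempotents
$$e_g\;:=\;\frac{1}{|G|}\sum_{\chi\in\hat{G}}\chi(-g)\,\psi_\chi\;\in\;\End_{\mk}(V),\qquad g\in G.$$
Using the orthogonality relations \eqref{orthoR}, a direct computation gives $e_g\,e_h=\delta_{g,h}\,e_g$ and $\sum_{g\in G}e_g=\psi_\gid=1_V$, so setting $V_g:=e_g(V)$ endows $V$ with a $G$-grading. A second direct computation shows that $\psi_\chi$ acts as multiplication by $\chi(g)$ on $V_g$, whence $\Xi$ applied to this graded vector space returns precisely $(V,\psi)$. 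Conversely, if $V$ already carries a $G$-grading, then for $v\in V_g$ one has $e_h(v)=|G|^{-1}\sum_\chi\chi(g-h)v=\delta_{g,h}\,v$, again by \eqref{orthoR}, so the idempotents recover the original grading. Thus the two assignments are mutually inverse at the level of objects.

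For morphisms, a $\mk$-linear map $f:V\to W$ commutes with every $\psi_\chi$ if and only if it preserves the joint eigenspaces $V_g\to W_g$, i.e.\ is homogeneous of degree zero. The forward implication is immediate from the defining formula $\psi_\chi(v)=\chi(\partial v)v$; the reverse follows because $V_g$ is intrinsically expressible as $e_g(V)$ in terms of $\psi$ alone. This makes $\Xi$ fully faithful, which combined with the essential surjectivity above yields the equivalence of categories.

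For the refinement to algebras, Proposition~\ref{fffun}(ii) already provides the passage from $G$-graded algebras to algebras with a $\hat{G}$-action by automorphisms. In the other direction, given $(A,\psi)$ with $\psi$ acting by algebra automorphisms, the decomposition $A=\bigoplus_g A_g$ from the construction above satisfies $A_g A_h\subseteq A_{g+h}$: for $a\in A_g$ and $b\in A_h$,
$$\psi_\chi(ab)\;=\;\psi_\chi(a)\psi_\chi(b)\;=\;\chi(g)\chi(h)\,ab\;=\;\chi(g+h)\,ab,$$
which forces $ab\in A_{g+h}$.

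There is no single hard step here; the one point requiring care is tracking where the hypothesis $\charr(\mk)\nmid|G|$ is used, namely in dividing by $|G|$ when defining the idempotents $e_g$ and in invoking the orthogonality relations \eqref{orthoR}. This is of course exactly the condition making $\mk\hat{G}$ semisimple, which provides the conceptual reason the argument succeeds.
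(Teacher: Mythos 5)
Your proof is correct and follows essentially the same route as the paper, which simply states that the inverse to $\Xi$ is constructed from the orthogonality relations \eqref{orthoR}; your idempotents $e_g=\frac{1}{|G|}\sum_{\chi}\chi(-g)\psi_\chi$ are exactly the Fourier-inversion device implicit in Equation~\eqref{eqInver}. The additional detail you supply (orthogonality of the $e_g$, full faithfulness on morphisms, and compatibility with multiplication) is a faithful expansion of the one-line argument the authors intended.
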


\begin{proof}
The inverse to~$\Xi$ is constructed using Equations~\eqref{orthoR}.
\end{proof}

Under the conditions of Lemma~\ref{LemGfin}, we thus find that the theory of $G$-gradings is equivalent to that of $\hat{G}$-actions as in Section~\ref{GroupAc}. In general, the theory of $\hat{G}$-actions will be much richer. In particular, $\Rep_{\mk}\hat{G}$ is far from being semisimple, contrary to~$\vecc^G_{\mk}$.

\begin{remark}{\rm
When $|G|$ is not finite or divides $\charr(\mk)$, the correct analogue of the equivalence in Lemma~\ref{LemGfin} is the well-known statement that we have an equivalence of categories
$$\vecc^G_{\mk}\;\stackrel{\sim}{\to}\; \Rep\,\mH,$$
for the (diagonalisable) affine group scheme $\mH:=\Spec\, \mk G$. Note that, by definition, $\Rep\,\mH$ is the category of comodules over the Hopf algebra $\mk[\mH]:=\mk G$. It then follows that the group of~$\mk$-points of the group scheme $\mH$ is
$$\mH(\mk):=\Hom_{\Alg}(\mk[\mH],\mk)\;=\;\Hom_{\Alg}(\mk G,\mk)\;=\;\Hom_{\Ab}(G,\mk^\times)\;=\;\hat{G}.$$
However, the canonical functor,
$$\Rep \,\mH\;\to \;\Rep_{\mk}\mH(\mk)\;=\;\Rep_{\mk}\hat{G}$$
is neither full nor dense in general.

For $G=\mZ$ and $\charr(\mk)=0$, the above functor, and hence $\vecc^{\mZ}_{\mk}\to\Rep_{\mk}\mG_m$ in Proposition~\ref{fffun}(i), is fully faithful, but not dense. When $G=\mZ_2$ and $\charr\mk=2$, the functor is dense but not full.}
\end{remark}

\subsection{The extended centre for a $G$-grading}
Fix a finite dimensional unital associative $G$-graded $\mk$-algebra $A$. Consider the algebra $A\otimes\mk\hat{G}$ with the $G$-grading of Remark~\ref{RemConv}(2) and the $\hat{G}$-grading 
of Remark~\ref{RemConv}(1). This actually yields a $G\times\hat{G}$-grading. %The algebra $\cZ^{\phi}(A)$ will in this case be denoted by $\cZ^G(A)$. 
%Note that for~$G$-graded $A$, any algebra $A[G]$ canonically inherits a $G$-grading.

\begin{schol}\label{DefZGA}
We apply Definition \ref{DefExtCen} to the $\hat{G}$-action~$\phi$ in Equation \eqref{GAut}.

{\rm
\begin{enumerate}[$($i$)$]
\item\label{DefZGA.1} The algebra $\cZ^\phi(A)$ is the $G\times \hat{G}$-graded 
subalgebra of~$A\otimes\mk\hat{G}$, where, for given $g\in G$ and $\chi\in\hat{G}$, the space
$\cZ^\phi(A)_{(g,\chi)}$ is spanned by all $(x,\chi)$, for which 
$x\in A_g$ and
$$x\,y=y_\chi\, x,\qquad\mbox{for all $y\in A$}.$$
\item\label{DefZGA.2} Consider the algebra morphism $A\otimes\mk\hat{G}\tto A$ given by $a\otimes\chi\mapsto a$.
The image of~$\cZ^\phi(A)$ under $A\otimes\mk\hat{G}\tto A$ is denoted by $\underline{\cZ}^\phi(A)$. The algebra  $\underline{\cZ}^\phi(A)$ is still naturally $G$-graded, but will, in general, no longer be $\hat{G}$-graded, see Example~\ref{ExDual}.
\item\label{DefZGA.3} By Proposition~\ref{fffun}(ii), the $\hat{G}$-grading on $\cZ^{\phi}(A)$ yields a $\hat{\hat{G}}$-action. By Equation~\eqref{dualdual}, we can pull this back to a $G$-action, where $g$ acts on $(x,\chi)\in \cZ^\phi(A)$ by sending it to~$(\chi(g)x,\chi)$.
\end{enumerate}
}
\end{schol}

\begin{remark}
{\rm
Most of the multiplication in the algebra $\cZ^\phi(A)$ is zero. Consider $g,h\in G$ and~$x\in A_g$,~$y\in A_h$ such that the elements $(x,\chi),(y,\chi')\in A\otimes\mk\hat{G}$ belong to~$\cZ^G(A)$. Then, clearly, 
$(x,\chi)(y,\chi')=0$ unless $\chi'(g)\chi(h)=1$.
}
\end{remark}

%%%%%%%%%%%%%%%%%%%%%%%%%%%%%%%%%%%%%%%%%%%%%%%%%%%%%%%%%%%%%%%%%%%%%%%%%%%

\section{The $G$-centre}\label{SecGcentre}

Fix a finite dimensional unital associative $G$-graded $\mk$-algebra $A$. We denote elements of the algebra $A\otimes \mk^G=\Hom_{\Set}(G,A)$ as
$$\mathbf{x}:G\to A,\quad g\mapsto x^{(g)}.$$

\begin{definition}\label{DefGcentre}{\rm
The {\em $G$-centre} $\cZ^G(A)$ of $A$ is the $G$-graded subalgebra of $A\otimes \mk^G$ given by
$$\{\xx\in A\otimes \mk^G\;|\; x^{(g)}y=yx^{(g+h)},\quad\mbox{for all }\; y\in A_h \;\mbox{and }h\in G\}.$$
The algebra $\cZ^G(A)$ admits a $G$-action, where the element $k\in G$ acting on $\xx$ yields $\{g\mapsto x^{(k+g)}\}$.
The algebra $\underline{\cZ}^G(A)$ is the image of $\cZ^G(A)$ under the morphism $A\otimes \mk^G\tto A$ given by $\xx\mapsto x^{(0)}$.}
\end{definition}

We can express the $G$-centre naturally in a generalisation of~\eqref{CentreId}.
Contrary to the previous generalisation of~\eqref{CentreId} to~$\cZ^\phi(A)$ in Theorem~\ref{ThmZphiFunctor}, we use the category $A$-gmod instead of~$A$-mod. %In the following, we use the notation of Subsection~\ref{StrictSec}.

\begin{theorem}\label{DescCen}
As $G$-graded algebras, we have
$\cZ^G(A)^{\op}\;\cong\;\End(\Pi).$
\end{theorem}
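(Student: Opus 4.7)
The plan is to realise both sides of the stated isomorphism inside $A^{\op}\otimes\mk^G$ via the astute evaluation at the left regular module $A\in A\text{-gmod}$. Recall that Definition~\ref{DefEvPi} gives an algebra morphism $\Delta\Ev^{\Pi}_A:\End(\Pi)\to\Hom_{\Set}(G,\End(\Pi;A))$, while Lemma~\ref{LemPiA} identifies $\End(\Pi;A)$ with $A^{\op}$ as $G$-graded algebras. Composing these and using the pointwise-product algebra structure on $\Hom_{\Set}(G,A)=A\otimes\mk^G$ produces a morphism of $G$-graded algebras
\[
\Theta\,:\,\End(\Pi)\,\to\, A^{\op}\otimes\mk^G.
\]
Explicitly, for $\eta\in\Nat(\Id,\Pi_h)$, since $\eta_{\Pi_gA}$ preserves degree, the vector $\eta_{\Pi_gA}(\Pi_g 1)\in (\Pi_{g+h}A)_{-g}$ must equal $\Pi_{g+h}y^{(g)}$ for a unique $y^{(g)}\in A_h$, and then $\Theta(\eta)=\yy$.

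The core of the argument is to identify the image of $\Theta$ with $\cZ^G(A)$. For the inclusion $\im\Theta\subseteq\cZ^G(A)$, I test naturality of $\eta$ against the degree-preserving morphism of graded $A$-modules $\rho_z:\Pi_g A\to\Pi_{g+k}A$ given by $\Pi_g a\mapsto\Pi_{g+k}(az)$, where $z\in A_k$. Evaluating the naturality square at $\Pi_g 1$, and using the $A$-linearity of $\eta_{\Pi_{g+k}A}$ on the left-hand side, yields $\Pi_{g+h+k}(z\,y^{(g+k)})=\Pi_{g+h+k}(y^{(g)}z)$, i.e.\ $y^{(g)}z=z\,y^{(g+k)}$, which is precisely the defining relation of $\cZ^G(A)$. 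For the reverse inclusion I exhibit an explicit inverse: given $\yy\in\cZ^G(A)$ of degree $h$, set
\[
\eta_M(v)\,:=\,\Pi_h\!\left(y^{(-m)}v\right)\qquad\text{for homogeneous } v\in M_m.
\]
Degree preservation is direct; $A$-linearity against an element of $A_l$ reduces to $y^{(-m-l)}a=a\,y^{(-m)}$, which is the $\cZ^G(A)$-relation at shift $-m-l$; naturality in $M$ is automatic since every graded $A$-module morphism is $\mk$-linear. Unwinding the definitions confirms that $\Theta$ sends this $\eta$ back to $\yy$, so $\Theta$ is a bijection onto $\cZ^G(A)$.

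Finally, because $\mk^G$ is commutative, the pointwise-product algebra $A^{\op}\otimes\mk^G$ coincides with $(A\otimes\mk^G)^{\op}$, so the subalgebra $\Theta(\End(\Pi))$, which equals $\cZ^G(A)$ as a subset of $A\otimes\mk^G$, is the opposite algebra $\cZ^G(A)^{\op}$; this gives the desired isomorphism $\End(\Pi)\cong\cZ^G(A)^{\op}$. The $G$-grading is preserved automatically because $\Theta$ maps $\Nat(\Id,\Pi_h)$ into $A_h\otimes\mk^G$, which is the degree-$h$ component of $\cZ^G(A)$. The main bookkeeping obstacle is keeping track of the degree shifts of the various $\Pi_g$'s when evaluating the astute evaluation and the composition in $\End(\Pi)$; the step that forces the opposite algebra to appear is the computation showing that the naïve composite of two natural transformations produces $y^{(g-k)}(y')^{(g)}$, which collapses, via one application of the $\cZ^G$-relation with $z=(y')^{(g)}\in A_k$, to the pointwise product $(y')^{(g)}y^{(g)}$ rather than $y^{(g)}(y')^{(g)}$.
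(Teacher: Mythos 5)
Your proposal is correct and follows essentially the same route as the paper, which proves Theorem~\ref{DescCen} precisely by showing (Proposition~\ref{PropEvPi}) that the astute evaluation at ${}_AA$, composed with the identification $\End(\Pi;A)\cong A^{\op}$ of Lemma~\ref{LemPiA}, is an injective algebra morphism $\End(\Pi)\to A^{\op}\otimes\mk^G$ with image $(\cZ^G(A))^{\op}$; your naturality test against right multiplication by $z\in A_k$ and your explicit section $\yy\mapsto\eta$ are the same two computations. The only point you leave implicit is the injectivity of $\Theta$ (your section only gives surjectivity onto $\cZ^G(A)$); this is immediate, as the paper notes, because every object of $A$-gmod is a quotient of a finite direct sum of modules $\Pi_kA$ and the functors $\Pi_g$ are exact, so a natural transformation vanishing on all $\Pi_kA$ vanishes everywhere.
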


This theorem will be proved in the following subsection. First we demonstrate that, when $|G|$ is finite and not divisible by $\charr(\mk)$ and hence $G$-gradings can be identified with $\hat{G}$-actions, the $G$-centre $\cZ^G(A)$ is isomorphic to the extended centre $\cZ^\phi(A)$ for the $\hat{G}$-action $\phi$ on $A$. Under these conditions, the $G$-action on $\cZ^G(A)$ must also correspond to a $\hat{G}$-grading, given by
\begin{equation}\label{eqGhatgrad}(\cZ^G(A))_{\chi}\;=\;\{\xx\in \cZ^G(A)\,|\, x^{(g)}=\chi(g)x^{(0)},\mbox{ for all $g\in G$}\}. \end{equation}

\begin{proposition}\label{PropphiG}
The injective morphism $A\otimes\mk\hat{G}\hookrightarrow A\otimes \mk^G$ which follows from Lemma~\ref{LemAGAG} restricts to an injective morphism of $G$-graded algebras
$$\cZ^\phi(A)\;\hookrightarrow \cZ^{G}(A),$$
which intertwines the $G$-actions in Scholium~\ref{DefZGA}\eqref{DefZGA.3} and Definition~\ref{DefGcentre}.
This is an isomorphism of $G\times\hat{G}$-graded algebras when $|G|$ is finite and not divisible by $\charr(\mk)$.
\end{proposition}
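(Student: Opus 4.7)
The plan is to verify by direct computation that the algebra morphism
$A\otimes\mk\hat{G}\to A\otimes\mk^G$ induced by Lemma~\ref{LemAGAG}
(sending $a\otimes\chi$ to the function $\{g\mapsto\chi(g)a\}$) restricts to the
required map between the two extended centres, and then to deduce the additional
structural properties one by one.

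First I would observe that this ambient morphism preserves $G$-gradings, since it
acts as the identity on the $A$-factor while $\mk^G$ sits in degree $0$ under the
conventions of~\ref{GradConv}. Next I would verify that an element
$(x,\chi)\in\cZ^\phi(A)_{(g,\chi)}$, which by Proposition~\ref{fffun} and
Scholium~\ref{DefZGA}\eqref{DefZGA.1} satisfies $xy=\chi(h)yx$ for every
$y\in A_h$, has image $\mathbf{x}:k\mapsto\chi(k)x$ lying in $\cZ^G(A)$: for
$y\in A_h$ one computes
\[
x^{(k)}y\;=\;\chi(k)xy\;=\;\chi(k+h)yx\;=\;yx^{(k+h)},
\]
which is the defining relation of $\cZ^G(A)$. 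Injectivity of the restriction is
inherited from Lemma~\ref{LemAGAG}. The intertwining of $G$-actions is another
short check: the element $k\in G$ sends $(x,\chi)$ to $(\chi(k)x,\chi)$ by
Scholium~\ref{DefZGA}\eqref{DefZGA.3}, whose image is $g\mapsto\chi(k+g)x$,
matching the translation of $\{g\mapsto\chi(g)x\}$ prescribed in
Definition~\ref{DefGcentre}.

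For the isomorphism claim when $|G|$ is finite and coprime to $\charr(\mk)$,
Lemma~\ref{LemAGAG} upgrades the ambient embedding to an isomorphism, so it
remains to establish surjectivity of the restriction. The plan is to exploit the
$G$-action on $\cZ^G(A)$: the subspace $\cZ^G(A)\subset A\otimes\mk^G$ is
$G$-stable because its defining relation is $G$-equivariant, and under the
hypotheses any $\mk G$-module splits into $\hat{G}$-isotypic components via the
orthogonality relations~\eqref{orthoR}. A $\chi$-eigenvector in $A\otimes\mk^G$
is precisely a function of the form $\{g\mapsto\chi(g)a\}$, i.e.\ the image of
$(a,\chi)\in A\otimes\mk\hat{G}$. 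Hence each eigencomponent of any
$\mathbf{x}\in\cZ^G(A)$ lies in the image of the ambient map, and specialising
the defining condition of $\cZ^G(A)$ to that eigencomponent forces its preimage
to sit in $\cZ^\phi(A)$, giving surjectivity. Finally, this same description of
$\chi$-eigenvectors identifies the $\hat{G}$-grading~\eqref{eqGhatgrad} on
$\cZ^G(A)$ with the intrinsic $\hat{G}$-grading on
$\cZ^\phi(A)\subset A\otimes\mk\hat{G}$, so the resulting isomorphism respects
the full $G\times\hat{G}$-grading.

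I do not anticipate any genuinely hard step; the argument is essentially
bookkeeping of three compatible structures (the $G$-grading, the translation
$G$-action, and the $\hat{G}$-grading) transported across one explicit linear
map. The most delicate point is ensuring that the $\hat{G}$-isotypic decomposition
of $A\otimes\mk^G$ restricts to a decomposition of $\cZ^G(A)$, but this is
immediate from the $G$-equivariance of the defining relation. The hypothesis
that $\charr(\mk)\nmid|G|$ enters only through~\eqref{orthoR} and is used
exactly where one expects: to effect the isotypic decomposition and to invoke
Lemma~\ref{LemAGAG}.
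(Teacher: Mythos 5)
Your proposal is correct and follows essentially the same route as the paper: the forward inclusion is the same direct verification that $(x,\chi)$ maps to $g\mapsto\chi(g)x\in\cZ^G(A)$, and your surjectivity argument via $\hat{G}$-isotypic decomposition of the $G$-stable subspace $\cZ^G(A)$ is just a conceptual repackaging of the paper's step of checking that the explicit inverse in Equation~\eqref{eqInver} (whose $\eta$-coefficient is exactly the $\eta$-isotypic projection) carries $\cZ^G(A)$ into $\cZ^\phi(A)$. The identification of eigenvectors with functions $g\mapsto\chi(g)a$ and the matching of the grading via Equation~\eqref{eqGhatgrad} agree with the paper's treatment.
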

\begin{proof}
By definition, $(x,\chi)\in\cZ^\phi\subset A\otimes\mk\hat{G}$, as in Scholium~\ref{DefZGA}(i), is sent to 
$$\xx:G\to A,\qquad g\mapsto \chi(g)x,$$
which is, clearly, an element of $\cZ^G(A)$. Since the $G$-gradings of both algebras are immediately inherited from the one on $A$, it is obvious that this morphism respects the $G$-grading. Equation~\eqref{eqGhatgrad} further implies that the image of $(x,\chi)\in \cZ^\phi(A)_\chi$ is indeed in $\cZ^G(A)_\chi$.

When $|G|$ is finite and not divisible by $\charr(\mk)$, one checks similarly that the inverse $A\otimes\mk^G\to A\otimes\mk\hat{G}$ in Equation~\eqref{eqInver} maps $\cZ^G(A)$ to~$\cZ^{\phi}(A)$.
\end{proof}

\begin{remark}
It follows similarly from the definitions that we obtain a morphism $\underline{\cZ}^\phi(A)\;\hookrightarrow \underline{\cZ}^{G}(A),$ which is an isomorphism when $|G|$ is finite and not divisible by $\charr(\mk)$.
\end{remark}

\subsection{Evaluation}
We study the evaluation in Lemma~\ref{LemEva}
$$\Ev_{M}^{\Pi}:\; \End(\Pi)\;\to\;\End(\Pi; M),$$
and the astute evaluation of Definition~\ref{DefEvPi},
$$\Delta\Ev^{\Pi}_{M}:\; \End(\Pi)\;\to\; \Hom_{\Set}(G,\End(\Pi; M)).$$

First we apply $\Delta\Ev^{\Pi}$ to the left regular module $M={}_AA$. By Lemma~\ref{LemPiA}, we have an isomorphism 
$$\Hom_{\Set}(G,\End(\Pi; A))\;\cong\;\Hom_{\Set}(G,A^{\op})\;\cong\; A^{\op}\otimes \mk^G.$$ We denote by $\Delta\overline{\Ev}_A^{\Pi}$ the composition of~$\Delta{\Ev}_A^{\Pi}$ with this isomorphism.

\begin{proposition}\label{PropEvPi}
The astute evaluation morphism
$$\Delta\overline{\Ev}_A^{\Pi}:\;\End(\Pi)\;\to\;  A^{\op}\otimes \mk^G=(A\otimes \mk^G)^{\op}$$
\begin{equation}\label{eqNatTrProof}\Nat(\Id,\Pi_g)\ni\eta\;\mapsto\; \{k\mapsto \Pi_{\minus g\minus k}(\eta_{\Pi_{k}A}(\Pi_k1)),\;\mbox{$k\in G$}\}.\end{equation}
is injective and has $(\cZ^G(A))^{\op}$ as the image. 
\end{proposition}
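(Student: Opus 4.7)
The plan is to verify three claims about the map $\Delta\overline{\Ev}_A^{\Pi}$: that its image lies in $(\cZ^G(A))^{\op}$, that every element of $(\cZ^G(A))^{\op}$ arises in this way, and that the map is injective. The main obstacle throughout is careful bookkeeping with the shift functors $\Pi_k$ and the degrees they induce; no individual step is conceptually difficult, and the surjectivity step involves the only genuinely new construction.

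For the containment, fix $\eta\in\Nat(\Id,\Pi_g)$ and set $x^{(k)}:=\Pi_{-g-k}(\eta_{\Pi_k A}(\Pi_k 1))$. Since $\Pi_k 1\in(\Pi_k A)_{-k}$ and $\eta_{\Pi_k A}$ has degree zero, one finds $\eta_{\Pi_k A}(\Pi_k 1)\in(\Pi_{g+k}A)_{-k}=A_g$, so $\xx$ is homogeneous of degree $g$ in $A\otimes\mk^G$. To extract the centrality relation I would exploit naturality of $\eta$ on the graded $A$-linear morphisms $\alpha_y^k:\Pi_k A\to\Pi_{k+h}A$ defined, for $y\in A_h$, by right multiplication $\Pi_k a\mapsto\Pi_{k+h}(ay)$ (right multiplication, not left, is what gives $A$-linearity). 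Evaluating the naturality square $\Pi_g(\alpha_y^k)\circ\eta_{\Pi_k A}=\eta_{\Pi_{k+h}A}\circ\alpha_y^k$ at $\Pi_k 1$, and using the module identity $\Pi_{k+h}y=y\cdot\Pi_{k+h}1$, yields $x^{(k)}y=y\,x^{(k+h)}$, which is precisely the defining relation of $\cZ^G(A)$.

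For surjectivity, given $\xx\in\cZ^G(A)_g$, I construct a candidate natural transformation by $\eta_M(m):=x^{(-\partial m)}\,m$ on homogeneous $m\in M$, extended linearly. The value has degree $g+\partial m$, hence sits in $(\Pi_g M)_{\partial m}$, so $\eta_M:M\to\Pi_g M$ is a degree-zero $\mk$-linear map. Its $A$-linearity, applied to $y\in A_h$, unwinds to the identity $x^{(-\partial m-h)}\,y=y\,x^{(-\partial m)}$, which is the centrality condition at $k=-\partial m-h$. Naturality in $M$ is automatic, since the defining formula depends only on $\partial m$ and on the ambient $A$-action, both preserved by any morphism in $A$-gmod. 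Specializing to $M=\Pi_k A$ and $m=\Pi_k 1$ (which has degree $-k$) gives $\eta_{\Pi_k A}(\Pi_k 1)=x^{(k)}\cdot\Pi_k 1$, and applying $\Pi_{-g-k}$ returns $x^{(k)}$, so $\Delta\overline{\Ev}_A^{\Pi}(\eta)=\xx$.

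For injectivity, suppose $\Delta\overline{\Ev}_A^{\Pi}(\eta)=0$. Then $\eta_{\Pi_k A}(\Pi_k 1)=0$ for every $k\in G$; since $\Pi_k A$ is generated over $A$ by $\Pi_k 1$ and $\eta_{\Pi_k A}$ is $A$-linear, this forces $\eta_{\Pi_k A}=0$ for all $k$. Any homogeneous $m\in M_j$ in an object $M$ of $A$-gmod is the image of $\Pi_{-j}1$ under a degree-zero $A$-linear morphism $\beta_m:\Pi_{-j}A\to M$, so naturality of $\eta$ through $\beta_m$ gives $\eta_M(m)=\Pi_g(\beta_m)(\eta_{\Pi_{-j}A}(\Pi_{-j}1))=0$, whence $\eta=0$. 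The delicate part in all three steps is merely to remember, when computing $\eta_{\Pi_k A}(\Pi_k 1)$, which iterated $\Pi$-shift a given element lives in before identifying it with a homogeneous element of $A$.
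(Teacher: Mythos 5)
Your proposal is correct and follows essentially the same route as the paper's proof: naturality of $\eta$ applied to the right-multiplication morphisms $\Pi_k A\to\Pi_{k+h}A$ gives the containment in $(\cZ^G(A))^{\op}$, the explicit formula $\eta_M(v)=\Pi_g x^{(-\partial v)}v$ gives surjectivity, and injectivity follows because every graded module is covered by shifted copies of ${}_AA$. Your degree bookkeeping with the convention $\partial(\Pi_g v)=\partial(v)-g$ is consistent throughout.
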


\begin{proof}
The injectivity of~$\Delta\overline{\Ev}_A^{\Pi}$ is obvious because the functors $\Pi_g$ are exact and any object in $A$-gmod is a factor module of a finite direct sum of modules isomorphic to~$\Pi_kA$, $k\in G$.

In the remainder of the proof, any multiplication of elements in $A$ will 
be interpreted as multiplication inside $A$, never in $A^{\op}$.

Now consider a natural transformation
$\eta: \Id\Rightarrow \Pi_g$ and $\xx=\Ev_{M}^{\Pi}(\eta)$, with $x^{(k)}= \Pi_{\minus g\minus k}(\eta_{\Pi_{k}A}(\Pi_k1))$ as in Equation~\eqref{eqNatTrProof}.
Consider arbitrary $h\in G$ and $a\in A_h$. This $a$ defines, for all $l\in G$, 
a morphism $\alpha_l:\Pi_l A\to \Pi_{l+h}A$ given by $\Pi_l b\mapsto \Pi_{l+h}ba$, 
for all $b\in A$. Note that, by definition, $\Pi_{l'}(\alpha_l)=\alpha_{l+l'}$. 
Since $\eta$ is a natural transformation, we have a commuting diagram
$$\xymatrix{\Pi_k A\ar[r]^{\eta_{\Pi_k A}}\ar[d]^{\alpha_k}&\Pi_g\Pi_kA\ar[d]^{\alpha_{g+k}}\\
\Pi_{h+k}A\ar[r]^{\eta_{\Pi_{h+k}A}}& \Pi_g\Pi_{h+k}A},$$
meaning that $x^{(k)}a=ax^{(h+k)}$, or $\xx\in\cZ^G(A)$. This implies that the image of 
$\Delta\overline{\Ev}_A^{\Pi}$ is contained in $(\cZ^G(A))^{\op}.$

Now, start from an arbitrary $\xx\in \cZ^G(A)_g$, for~$g\in G$. We want to define a natural transformation
$\eta: \Id \Rightarrow \Pi_g$. For any $M\in A$-gmod, we define a morphism
$$\eta_M: M\to \Pi_g M\; \;\;\mbox{by}\; \;\; v\mapsto \Pi_gx^{(-h)}v,\;\;\mbox{ for }\; v\in M_h.$$
This morphism is $A$-linear by construction.
For any morphism $\alpha:M\to N$, we claim that  $\eta_N\circ \alpha=\Pi_g(\alpha)\circ \eta_M$. Indeed, for~$v\in M_h$, we have
$$\eta_N\circ \alpha(v)=\Pi_g x^{(-h)}\alpha(v)=\Pi_g \alpha(x^{(-h)}v)=\Pi_g(\alpha)\left(\Pi_g x^{(-h)}v\right)=\Pi_g(\alpha)\circ \eta_M (v),$$
so $\eta$ is a natural transformation. Thus we find that the image of 
$\Delta\overline{\Ev}_A^{\Pi}$ is, in fact, equal to~$(\cZ^G(A))^{\op},$ concluding the proof.
\end{proof}

Proposition~\ref{PropEvPi} implies Theorem~\ref{DescCen}. Additionally, we also have the following two corollaries.
First, we compose $\Ev_A^\Pi$ with the isomorphism in Lemma~\ref{LemPiA}.

\begin{corollary}
The image of
$\;\overline{\Ev}^{\Pi}_A:\;\End(\Pi)\to A^{\op}$
is given by $\underline{\cZ}^G(A)^{\op}$.
\end{corollary}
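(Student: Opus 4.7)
The plan is to reduce the corollary to Proposition~\ref{PropEvPi} by observing that $\overline{\Ev}^{\Pi}_A$ factors through the astute evaluation $\Delta\overline{\Ev}^{\Pi}_A$. More precisely, I would show that $\overline{\Ev}^{\Pi}_A$ is the composition of $\Delta\overline{\Ev}^{\Pi}_A$ with the algebra morphism
$$A^{\op}\otimes\mk^G\;\tto\;A^{\op},\qquad \xx\mapsto x^{(0)},$$
which is precisely the opposite of the surjection $A\otimes\mk^G\tto A$ used in Definition~\ref{DefGcentre} to define $\underline{\cZ}^G(A)$.

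To verify this factorization, I would unpack the definitions: for $\eta\in\Nat(\Id,\Pi_g)$, Lemma~\ref{LemPiA} identifies $\eta_A\in\hom_A(A,\Pi_gA)$ with $\Pi_{-g}\eta_A(1)\in A^{\op}$, so $\overline{\Ev}^{\Pi}_A(\eta)=\Pi_{-g}\eta_A(1)$. On the other hand, the formula for $\Delta\overline{\Ev}^{\Pi}_A(\eta)$ in Proposition~\ref{PropEvPi} assigns to $k\in G$ the element $\Pi_{-g-k}(\eta_{\Pi_kA}(\Pi_k1))$, which at $k=0$ reduces to $\Pi_{-g}\eta_A(1)$ since $\Pi_0=\Id$. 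Hence the two morphisms agree after evaluation at $0\in G$.

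Granting this factorization, the corollary follows immediately: by Proposition~\ref{PropEvPi}, the image of $\Delta\overline{\Ev}^{\Pi}_A$ equals $\cZ^G(A)^{\op}\subset(A\otimes\mk^G)^{\op}$, and applying the evaluation-at-$0$ surjection gives, by definition of $\underline{\cZ}^G(A)$, exactly $\underline{\cZ}^G(A)^{\op}$. There is no real obstacle here; all the substantive work, namely the injectivity of the astute evaluation and the identification of its image with $\cZ^G(A)^{\op}$, has already been carried out in Proposition~\ref{PropEvPi}.
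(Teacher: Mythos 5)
Your proposal is correct and follows essentially the same route as the paper: the paper's proof is precisely the commuting triangle identifying $\overline{\Ev}^{\Pi}_A$ as $\Delta\overline{\Ev}^{\Pi}_A$ followed by evaluation at $0\in G$, after which the claim follows from Proposition~\ref{PropEvPi} and Definition~\ref{DefGcentre}. Your explicit verification of the factorization by unpacking the formula $\Pi_{-g-k}(\eta_{\Pi_kA}(\Pi_k 1))$ at $k=0$ is a correct elaboration of what the paper leaves as ``by definition.''
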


\begin{proof}
By definition, we have a commuting triangle of algebra morphisms
$$\xymatrix{
\End(\Pi)\ar[rr]^{\Delta\overline{\Ev}^{\Pi}_A}
\ar[rrd]_{\overline{\Ev}^{\Pi}_A}&&A^{\op}\otimes \mk^G\ar@{->>}[d]\\
&& A^{\op}
}$$
in which the vertical arrow is given by $\xx\mapsto x^{(0)}$. The result hence follows from Proposition~\ref{PropEvPi} and Definition~\ref{DefGcentre}.
\end{proof}

\begin{corollary}\label{CorGcentrePart}
Assume that $|G|$ is finite and not divisible by $\charr(\mk)$.
Consider the $G\times \hat{G}$-grading on~$\cZ^G(A)$ as given by  Definition~\ref{DefGcentre} and Equation~\eqref{eqGhatgrad}. 
The algebra isomorphism in Theorem~\ref{DescCen} restricts to vector space isomorphisms
$$\cZ^G(A)_{g,\chi}\;\cong\; \{\eta\in \Nat(\Id,\Pi_g)\;|\; \eta_{\Pi_k}\,=\, \chi(k)\,\Pi_k(\eta),\;\;
\text{ for all } k\in G\}.$$
\end{corollary}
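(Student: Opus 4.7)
The plan is to trace the $G\times\hat{G}$-bigrading on $\cZ^G(A)$ through the isomorphism of Theorem~\ref{DescCen} in two stages. The $G$-grading is already part of Theorem~\ref{DescCen}, so the real content is the $\hat{G}$-grading. Under the hypothesis that $|G|$ is finite and not divisible by $\charr(\mk)$, Equation~\eqref{eqGhatgrad} identifies $\cZ^G(A)_\chi$ with the $\chi$-eigenspace of the $G$-action on $\cZ^G(A)$ described in Definition~\ref{DefGcentre}, namely $k\cdot\xx=\{g\mapsto x^{(k+g)}\}$. Hence it suffices to identify the transported $G$-action on $\End(\Pi)$ and intersect its $\chi$-eigenspace with $\End(\Pi)_g=\Nat(\Id,\Pi_g)$.

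The first step is to compute this transported action explicitly. Using the inverse formula from the proof of Proposition~\ref{PropEvPi}, namely $\eta_M(v)=\Pi_gx^{(-h)}v$ for $v\in M_h$, one substitutes $x^{(k+g)}$ for $x^{(g)}$ and verifies that the resulting natural transformation $\eta'$ satisfies $\eta'_M=\Pi_{-k}(\eta_{\Pi_k M})$ for every $M\in A$-gmod. Equivalently, one can work with the formula $x^{(l)}=\Pi_{-g-l}(\eta_{\Pi_lA}(\Pi_l 1))$ and observe, after bookkeeping the shift conventions $\Pi_{a}\Pi_{b}=\Pi_{a+b}$, that the reindexing $l\leadsto l+k$ corresponds exactly to replacing $\eta$ by $\Pi_{-k}(\eta_{\Pi_k})$. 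Either way, the conclusion is that the action of $k\in G$ on $\End(\Pi)_g$ is given by conjugation $\eta\mapsto\Pi_{-k}(\eta_{\Pi_k})$.

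With this identification in hand, the second step is immediate. The $\chi$-eigenspace of the transported action on $\End(\Pi)_g$ is
$$\{\eta\in\Nat(\Id,\Pi_g)\;|\;\Pi_{-k}(\eta_{\Pi_k})=\chi(k)\eta,\;\text{ for all } k\in G\},$$
and applying the (invertible) functor $\Pi_k$ to both sides turns the defining relation into the equivalent form $\eta_{\Pi_k}=\chi(k)\Pi_k(\eta)$. Because the isomorphism of Theorem~\ref{DescCen} was shown in the first step to intertwine the two $G$-actions, it restricts to the asserted vector space isomorphism between $\cZ^G(A)_{g,\chi}$ and the described subspace.

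The hard part will be the bookkeeping in the first step: the grading-shift conventions for $\Pi_g$ (in particular the sign convention of~\eqref{eqConv}), the contravariance introduced by the $^{\op}$ in Theorem~\ref{DescCen}, and the precise form of the inverse isomorphism from Proposition~\ref{PropEvPi} must all be juggled simultaneously, and it is easy to lose a sign somewhere. Once the transported action is correctly pinned down as $\eta\mapsto\Pi_{-k}(\eta_{\Pi_k})$, the translation from the $\chi$-eigenspace condition to the equality $\eta_{\Pi_k}=\chi(k)\Pi_k(\eta)$ is purely formal.
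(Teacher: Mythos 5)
Your proposal is correct and, at its core, performs the same computation as the paper: relating $\eta_{\Pi_k}$ evaluated on the shifted regular modules to the reindexing $x^{(l)}\mapsto x^{(l+k)}$ via the evaluation formula of Proposition~\ref{PropEvPi}. The paper's proof is the minimal version of this — it takes $\eta$ with $\eta_{\Pi_k}=\chi(k)\,\Pi_k(\eta)$, evaluates at $\Pi_kA$ on $\Pi_k1$ to get $x^{(k)}=\chi(k)x^{(0)}$, and concludes by Equation~\eqref{eqGhatgrad} — whereas your packaging via equivariance of the two $G$-actions proves slightly more and yields both containments at once, but the bookkeeping you flag as the hard part is exactly the one-line calculation the paper carries out.
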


\begin{proof}
Consider $\eta$ as in the right-hand side. By Equation~\eqref{PropEvPi}, we have that the corresponding $\xx\in \cZ^G(A)$ is given by
$$x^{(k)}\;:=\;\Pi_{\minus g\minus k}(\eta_{\Pi_{k}A}(\Pi_k1)).$$
By assumption, we have 
$$\eta_{\Pi_kA}(\Pi_k1)= \chi(k)\Pi_k(\eta_A)(\Pi_k1)= \chi(k)\Pi_k(\eta_A(1)),$$
which means
$$x^{(k)}\;:=\;\chi(k)\Pi_{-g}(\eta_A(1))\;=\;\chi(k)x^{(0)}.$$
Since $\Pi_{-g}(\eta_A(1))\in A_g$, Equation~\eqref{eqGhatgrad} shows that $\xx\in \cZ^G(A)_{g,\chi}$. 
\end{proof}

In analogy with Definition~\ref{defzeta}, we introduce the following composition of morphisms. We set $\cD^{\gggg}=\cD^b(A\mbox{-gmod})$ and  $\cD=\cD^b(A\mbox{-mod})$.

\begin{definition}\label{DefZetaDelta}
{\rm
Consider $X_\bullet\in\cD^{\gggg}$, with $\Lambda:=\End_{\cD}(X_\bullet)$ equipped with the $G$-grading inherited in Lemma~\ref{LemFX} and Equation~\eqref{GradLambda}. The morphism $$\Delta\zeta_{X_\bullet}: \,\cZ^G(A)^{\op}\;\to\; \Lambda\otimes \mk^G$$ of~$G$-graded algebras is given by the composition 
$$\cZ^G(A)^{\op}\;\;\tilde\to\;\; \End(\Pi)\;\hookrightarrow\;\End(\Pi_\bullet)\;\to\;\Hom_{\Set}(G,\End(\Pi_\bullet; X_\bullet))\;\tilde\to\; \Lambda\otimes\mk^G.$$
The first isomorphism is Proposition~\ref{PropEvPi}, the third morphism is $\Delta\Ev_{X_\bullet}^{\Pi}$ in Definition~\ref{DefEvPi} and the last isomorphism is induced from the one in Lemma~\ref{LemFX}.
}
\end{definition}

%Note that $\Lambda$ is $G$-graded by Equation~\eqref{GradLambda}.
\begin{lemma}\label{LemZetaIm}
With notation as in Definition~\ref{DefZetaDelta}, the image of~$\Delta\zeta_{X_\bullet}$ is contained in $\cZ^G(\Lambda^{\op})^{\op}$.
The corresponding morphism
$$\Delta\zeta_{X_\bullet}\;:\cZ^G(A)^{\op}\;\to\; \cZ^G(\Lambda^{\op})^{\op}$$
is a morphism of~$G\times\hat{G}$-graded algebras.
\end{lemma}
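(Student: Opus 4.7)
My plan is to trace a generic element of $\cZ^G(A)^{\op}$ through the composition in Definition~\ref{DefZetaDelta} and then reduce both the image condition and the grading compatibilities to a single application of naturality.

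First, I would unpack what the composition does concretely. By Proposition~\ref{PropEvPi}, an element $\xx\in\cZ^G(A)$ of $G$-degree $g$ corresponds to some $\eta\in \Nat(\Id,\Pi_g)$, and, by the proof of that proposition, $\eta_M(v)=\Pi_g(x^{(-\partial v)}v)$ on homogeneous elements. Pushing $\eta$ through the inclusion into $\End(\Pi_\bullet)$, applying the astute evaluation from Definition~\ref{DefEvPi} and the isomorphism of Lemma~\ref{LemFX}, the resulting element $\yy\in \Lambda\otimes \mk^G$ is
\[
y^{(k)}\;=\;F^{\gggg}\!\left(\eta_{\Pi_kX_\bullet}\right)\;\in\;\Lambda_g,
\]
since $F^{\gggg}$ identifies $\Pi_jY_\bullet$ with $Y_\bullet$ for every $j\in G$, killing the outer $\Pi_{-k}$ in Definition~\ref{DefEvPi}.

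Next I would verify $\yy\in\cZ^G(\Lambda^{\op})$. Given $\lambda\in\Lambda_h$, I lift it via \eqref{GradLambda} to $\tilde\lambda\in\Hom_{\cD^{\gggg}}(X_\bullet,\Pi_hX_\bullet)$, and apply naturality of $\eta$ to the morphism $\Pi_k(\tilde\lambda):\Pi_kX_\bullet\to\Pi_{k+h}X_\bullet$, obtaining
\[
\eta_{\Pi_{k+h}X_\bullet}\circ\Pi_k(\tilde\lambda)\;=\;\Pi_{g+k}(\tilde\lambda)\circ\eta_{\Pi_kX_\bullet}.
\]
Applying $F^{\gggg}$ (which sends both $\Pi_k(\tilde\lambda)$ and $\Pi_{g+k}(\tilde\lambda)$ to $\lambda$) yields $y^{(k+h)}\lambda=\lambda y^{(k)}$ in $\Lambda$, which rewritten in $\Lambda^{\op}$ reads $y^{(k)}\cdot_{\op}\lambda=\lambda\cdot_{\op}y^{(k+h)}$. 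This is precisely the defining condition of $\cZ^G(\Lambda^{\op})$ from Definition~\ref{DefGcentre}, so $\yy$ lies in $\cZ^G(\Lambda^{\op})^{\op}\subset (\Lambda^{\op}\otimes\mk^G)^{\op}=\Lambda\otimes\mk^G$. Multiplicativity of $\Delta\zeta_{X_\bullet}$ is inherited from the composition, so only the image claim needs new input here.

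Finally, I would check the graded structure. Preservation of the $G$-grading is immediate from $y^{(k)}\in\Lambda_g$ whenever $\xx$ is of degree $g$. For the $\hat{G}$-component, I would reuse the characterisation underlying Corollary~\ref{CorGcentrePart}: a direct calculation (independent of any assumption on $|G|$) shows that $x^{(k)}=\chi(k)x^{(0)}$ is equivalent to $\eta_{\Pi_kM}=\chi(k)\Pi_k(\eta_M)$ for all $M$, and the latter formula then forces $y^{(k)}=\chi(k)F^{\gggg}(\Pi_k(\eta_{X_\bullet}))=\chi(k)y^{(0)}$, placing $\yy$ in $\cZ^G(\Lambda^{\op})_\chi$ via \eqref{eqGhatgrad}. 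I expect the main obstacle to be bookkeeping rather than substance: juggling the identifications $\Hom_{\cD^{\gggg}}(X_\bullet,\Pi_gX_\bullet)\cong\End(\Pi_\bullet;X_\bullet)_g\cong\Lambda_g$, distinguishing compositions in $\cD^{\gggg}$ from compositions in $\Lambda$, and carefully flipping between multiplications on $\cZ^G(-)$ and its opposite—once this is under control, the whole argument collapses to one naturality square.
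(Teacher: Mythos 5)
Your proposal is correct and follows essentially the same route as the paper's proof: the same explicit formula $y^{(k)}=F^{\gggg}(\eta_{\Pi_kX_\bullet})$ for the image, the same naturality square applied to lifts of homogeneous elements of $\Lambda$ to establish membership in $\cZ^G(\Lambda^{\op})$, and the same use of the characterisation in Corollary~\ref{CorGcentrePart} for the $\hat{G}$-grading. Your added remark that this characterisation holds without the finiteness hypothesis on $|G|$ is a sensible (and correct) clarification, but otherwise the two arguments coincide.
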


\begin{proof}
The image under $\Delta\zeta_{X_\bullet}$ of an element in $\cZ^G(A)$ corresponding to the natural transformation~$\eta:\Id\Rightarrow\Pi_g$ is given by
$$\xx\in \Lambda\otimes\mk^G,\quad\mbox{ with $x^{(k)}:=F^{\gggg}(\eta_{\Pi_k X_\bullet})$}.$$
For an arbitrary $\beta\in \Hom_{\cD^{\gggg}}(X_\bullet,\Pi_h X_\bullet)$, the fact that $\eta$ is a natural transformation implies that
$$\Pi_{g+k}(\beta)\circ\eta_{\Pi_kX_\bullet}\;=\;\eta_{\Pi_{k+h}X_\bullet}\circ\Pi_k(\beta).$$
In particular, we have
$$F^{\gggg}(\beta)\circ x^{(k)}=x^{(k+h)}\circ F^{\gggg}(\beta),$$
which proves that $\xx$ is in $\cZ^G(\Lambda^{\op})$.

That the $G$-grading is preserved follows by construction. Now, take an element in $\cZ^G(A)_{g,\chi}$, 
for~$g\in G$ and $\chi\in\hat{G}$. By Corollary~\ref{CorGcentrePart}, this corresponds to a natural 
transformation $\eta:\Id\Rightarrow\Pi_g$ satisfying $\eta_{\Pi_k}= \chi(k)\,\Pi_k(\eta)$, for all $k\in G$. 
Therefore 
$$x^{(k)}=F^{\gggg}(\eta_{\Pi_k X_\bullet})=\chi(k)F^{\gggg}(\Pi_k(\eta_{ X_\bullet}))=\chi(k)F^{\gggg}(\eta_{ X_\bullet})=\chi(k)x^{(0)},$$
so $\xx\in \cZ^G(A)_\chi$, by Equation~\eqref{eqGhatgrad}.
 This completes the proof.
\end{proof}

\subsection{The $G$-centre and Gradable derived equivalences}

Following~\cite[Section~3.2]{CM4}, we use the term ``gradable derived equivalence'' 
for an equivalence which commutes  both with grading shifts and the suspension functor.

\begin{definition}
{\rm 
Consider two $G$-graded algebras $A$ and~$B$.
\begin{enumerate}[(i)]
\item A {functor} $H:\cD^b(A\mbox{{\rm-gmod}})\to\cD^b(B\mbox{{\rm-gmod}})$ is {\em graded} if it intertwines the $G$-actions $\Pi$, as in~\ref{CatComm}. 
\item A {gradable derived equivalence} between two $G$-graded algebras $A$ and $B$ is a graded and triangulated functor $F:\cD^b(A\mbox{{\rm-gmod}})\to\cD^b(A\mbox{{\rm-gmod}})$ which admits an inverse which is also a graded and triangulated functor. A {gradable derived equivalence} is {\em strong} if it is strong in the sense of Section~\ref{NotConv}.
\end{enumerate}
}
\end{definition}

The following is a generalisation of \cite[Proposition~9.2]{Derived1} to~$G$-graded algebras and an analogue of Theorem~\ref{ThmDerEqPhi}.

\begin{theorem}\label{ThmGradEq}
If two $G$-graded algebras $A$ and $B$ are strongly gradable derived equivalent, then $\cZ^G(A)\cong\;\cZ^G(B)$ as $G\times \hat{G}$-graded algebras.
\end{theorem}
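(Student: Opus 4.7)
The plan is to mimic the proof of Theorem~\ref{ThmDerEqPhi}, replacing the extended centre with the $G$-centre, the evaluation $\zeta_{T_\bullet}$ of Definition~\ref{defzeta} with the astute evaluation $\Delta\zeta_{T_\bullet}$ of Definition~\ref{DefZetaDelta}, and the ingredients from Section~\ref{SecExtCen} with their graded analogues from Section~\ref{SecGcentre}. The central identification is Theorem~\ref{DescCen}, which realises $\cZ^G(A)^{\op}$ as $\End(\Pi^A)$, so the whole argument is a matter of transporting natural transformations across the tilting complex $T_\bullet := F({}_AA) \in \cD^b(B\text{-gmod})$.

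First, since $F$ is a strong gradable derived equivalence, $T_\bullet$ is quasi-isomorphic to a graded tilting $B$-module, and $\Lambda := \End_{\cD^b(B\text{-mod})}(T_\bullet) \cong A^{\op}$ as algebras by standard tilting theory. Via Lemma~\ref{LemFX} and Equation~\eqref{GradLambda}, the graded lift of $T_\bullet$ equips $\Lambda$ with a $G$-grading. I would verify, by tracing through the intertwiners $\xi^g : F\circ\Pi^A_g \Rightarrow \Pi^B_g \circ F$ together with the identification $\End_A({}_AA) \cong A^{\op}$ of Lemma~\ref{LemPiA}, that this $G$-grading on $\Lambda$ matches, under $\Lambda \cong A^{\op}$, the original grading on $A^{\op}$; this is precisely where the gradable hypothesis is used.

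Next, I would apply Lemma~\ref{LemZetaIm} to $T_\bullet$ to obtain a morphism of $G\times\hat{G}$-graded algebras $\Delta\zeta_{T_\bullet} : \cZ^G(B)^{\op} \to \cZ^G(\Lambda^{\op})^{\op}$, and then combine it with the graded algebra identification $\Lambda^{\op} \cong A$ from the previous step to obtain a $G\times\hat{G}$-graded morphism $\Psi_F : \cZ^G(B) \to \cZ^G(A)$. Injectivity of $\Psi_F$ follows as in Lemma~\ref{PropSumm}: each of the three maps composing $\Delta\zeta_{T_\bullet}$ in Definition~\ref{DefZetaDelta} is injective — the first is an isomorphism by Proposition~\ref{PropEvPi}, the second is the canonical embedding of natural transformations of exact functors into their derived-level extensions, and $\Delta\Ev^\Pi_{T_\bullet}$ is injective by the tilting property. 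Applying Proposition~\ref{InterInv} to produce intertwiners $\eta^g$ for $F^{-1}$ and repeating the construction with $(F^{-1},\eta^g)$ in place of $(F,\xi^g)$ yields an injective $G\times\hat{G}$-graded morphism $\Psi_{F^{-1}} : \cZ^G(A) \to \cZ^G(B)$; naturality of every construction involved, together with the adjunction relations between $F$ and $F^{-1}$ built into Proposition~\ref{InterInv}, should then force $\Psi_F$ and $\Psi_{F^{-1}}$ to be mutually inverse.

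The main obstacle I anticipate is the bookkeeping in the first step — verifying that the $G$-grading on $\Lambda$ transported from $T_\bullet \in \cD^b(B\text{-gmod})$ corresponds, under the tilting isomorphism $\Lambda \cong A^{\op}$, to the ambient grading on $A^{\op}$; this amounts to checking compatibility of the $\xi^g$ with the identifications of Lemmas~\ref{LemPiA} and~\ref{LemFX}. A secondary technical subtlety is that, unlike in Theorem~\ref{ThmDerEqPhi} where finite-dimensionality of $A$ and $B$ upgraded degree-preserving injections to isomorphisms by a dimension count, here $\cZ^G(A)$ and $\cZ^G(B)$ may be infinite dimensional when $|G|$ is infinite, so one must argue directly from the construction that $\Psi_F \circ \Psi_{F^{-1}}$ and $\Psi_{F^{-1}} \circ \Psi_F$ are identities rather than appealing to dimension.
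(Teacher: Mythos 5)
Your proposal follows essentially the same route as the paper's proof: set $X_\bullet=F({}_AA)$, identify $\End(\Pi_\bullet;X_\bullet)\cong\End(\Pi;A)\cong A^{\op}$ as $G$-graded algebras (the paper does the grading bookkeeping you flag via Lemmata~\ref{LemInt2} and~\ref{LemPiA}), apply Lemma~\ref{LemZetaIm} together with the injectivity statement Lemma~\ref{LemEvT}(ii) to get an injective morphism of $G\times\hat{G}$-graded algebras $\cZ^G(B)^{\op}\to\cZ^G(A)^{\op}$, and conclude by symmetry. The only divergence is the final step, where the paper upgrades the two graded injections to bijections by appealing to finite-dimensionality of $A$ rather than checking that the composites are identities; your observation that $\cZ^G(A)$ (and even its individual $G$-graded components) can be infinite dimensional for infinite $G$ is a fair caveat that applies to the paper's own wording as well.
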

\begin{proof}
Let $F:\cD^b(A\mbox{-gmod})\to\cD^b(B\mbox{-gmod})$ denote a gradable derived equivalence. We will write $\cD^{\gggg}$ for~$ \cD^b(B\mbox{-gmod})$.
We set $X_\bullet\in \cD^{\gggg}$ equal to~$F(A)$. By Lemmata~\ref{LemInt2} and~\ref{LemPiA}, we have algebra isomorphisms
$$\End(\Pi_\bullet ;X_\bullet)\;\cong\; \End(\Pi; A)\;\cong\; A^{\op}.$$
as $G$-graded algebras. By Lemma~\ref{LemZetaIm}, we then have a morphism of~$G\times \hat{G}$-graded algebras
$$\Delta\zeta_{X_\bullet}:\; \cZ^{G}(B)^{\op}\;\to\; \cZ^G(A)^{\op}.$$
This morphism is injective by Lemma~\ref{LemEvT}(ii).

By symmetry in the definition of gradable derived equivalences, the fact that the injective morphisms respect the $G$-grading and the fact that $A$ is finite dimensional, it follows that the injective morphisms must be bijections.\end{proof}

%%%%%%%%%%%%%%%%%%%%%%%%%%%%%%%%%%%%%%%%%%%%%%%%%%%%%%%%%%%%%%%%%%%%%%%%%%%%%%

\section{Superalgebras}\label{SecSuper}

We consider the special case $G=\mZ_2=\{\oa,\ob\}$ and we assume~$\charr(\mk)\not=2$. $G$-graded algebras are then also known as superalgebras and the category $A$-gmod is known as the category of supermodules.

\subsection{Super, anti and ghost centre}

The character group is $\hat{G}=\{\chi_0,\chi_1\}$, where $\chi_0(\ob)=1$ and~$\chi_1(\ob)=-1$.
For the interpretation of~$G$-graded algebras as superalgebras, some terminology appeared in \cite{Gorelik}, which we link to our constructions.

The {\em super centre} of~$A$, denoted by $s\cZ(A)$, is the subalgebra of~$A$ spanned by homogeneous elements $x$ satisfying
\begin{equation}\label{SAeq1}xy=(-1)^{\partial x\,\partial y}yx,\end{equation}
for all homogeneous $y\in A$. The {\em anti centre}, denoted by $a\cZ(A)$, is a subspace of~$A$ spanned by homogeneous elements $x$ satisfying
\begin{equation}\label{SAeq2}xy=(-1)^{(\partial x+1)\partial y}yx.\end{equation}
Generally, the anti centre does not constitute a subalgebra. The product of two elements of~$a\cZ(A)$ belongs to~$s\cZ(A)$. The subalgebra of~$A$ consisting of linear combinations of elements of the super and the anti centre is known as the {\em ghost centre},~$\widetilde\cZ(A)=s\cZ(A)+a\cZ(A)$.

We can rewrite Equation~\eqref{SAeq1} as
 $$xy=\begin{cases}y_{\chi_0}\,x,&\mbox{if $x\in A_{\oa}$;}\\y_{\chi_1}\,x,&\mbox{if $x\in A_{\ob}$}.\end{cases}$$
 Similarly, Equation~\eqref{SAeq2} becomes
  $$xy=\begin{cases}y_{\chi_1}\,x,&\mbox{if $x\in A_{\oa}$;}\\y_{\chi_0}\,x,&\mbox{if $x\in A_{\ob}$}.\end{cases}$$
By Proposition~\ref{PropphiG} and Scholium~\ref{DefZGA}(i), we thus have the following.
 \begin{proposition}\label{PropS2}
For $G=\mZ_2$, the $G\times \hat{G}$-grading of~$\cZ^G(A)$ satisfies
\begin{enumerate}[$($i$)$]
\item\label{PropS2.1} $s\cZ(A)\;=\;\cZ^G(A)_{\oa,\chi_0}\,\oplus \, \cZ^G(A)_{\ob,\chi_1};$
\item\label{PropS2.2} $a\cZ(A)\;=\;\cZ^G(A)_{\oa,\chi_1}\,\oplus\, \cZ^G(A)_{\ob,\chi_0}.$
\end{enumerate}
As vector spaces, we hence have $$\cZ^G(A)=s\cZ(A)\,\oplus\,a\cZ (A),$$ where the latter direct sum is abstract, not inside $A$.
\end{proposition}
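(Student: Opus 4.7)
The plan is to prove Proposition~\ref{PropS2} by unwinding the explicit description of the $G\times\hat{G}$-graded pieces of $\cZ^G(A)$ provided by Proposition~\ref{PropphiG} and Scholium~\ref{DefZGA}\eqref{DefZGA.1}. Under the hypothesis $\charr(\mk)\neq 2$, $|G|=2$ is not divisible by $\charr(\mk)$, so Proposition~\ref{PropphiG} gives an isomorphism $\cZ^\phi(A)\cong\cZ^G(A)$ of $G\times\hat{G}$-graded algebras, and the graded piece $\cZ^G(A)_{g,\chi}$ consists of those $(x,\chi)$ with $x\in A_g$ satisfying $xy=y_\chi x$ for all $y\in A$.

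First I would record what the character action looks like concretely. Since $\chi_0$ is the trivial character, $y_{\chi_0}=y$ for every $y\in A$, while $\chi_1(\oa)=1$ and $\chi_1(\ob)=-1$ give $y_{\chi_1}=(-1)^{\partial y}y$ on any homogeneous $y$. Substituting this into the defining relation $xy=y_\chi x$ and scanning the four choices of $(g,\chi)\in\mZ_2\times\hat{G}$, the conditions read as follows: $(g,\chi)=(\oa,\chi_0)$ gives $xy=yx$ with $\partial x=0$; $(\ob,\chi_1)$ gives $xy=(-1)^{\partial y}yx$ with $\partial x=1$, so $xy=(-1)^{\partial x\partial y}yx$ in both cases; on the other hand $(\oa,\chi_1)$ gives $xy=(-1)^{\partial y}yx$ with $\partial x=0$, and $(\ob,\chi_0)$ gives $xy=yx$ with $\partial x=1$, which together yield $xy=(-1)^{(\partial x+1)\partial y}yx$. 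Comparing with Equations~\eqref{SAeq1} and~\eqref{SAeq2} and recalling that $s\cZ(A)$ and $a\cZ(A)$ are each $G$-graded by $\mZ_2$, this directly verifies (i) and (ii).

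Finally, to deduce the abstract direct sum decomposition $\cZ^G(A)=s\cZ(A)\oplus a\cZ(A)$, I would combine (i) and (ii) with the intrinsic $G\times\hat{G}$-grading $\cZ^G(A)=\bigoplus_{g,\chi}\cZ^G(A)_{g,\chi}$. The point to emphasise is that the sum is abstract rather than internal to $A$: a homogeneous $x\in A$ which happens to lie in both $s\cZ(A)$ and $a\cZ(A)$ produces two genuinely distinct elements $(x,\chi_0)$ and $(x,\chi_1)$ in $A\otimes\mk\hat{G}$, whereas inside $\widetilde\cZ(A)\subset A$ these two images coincide. There is no real obstacle in the argument; the only point deserving care is this distinction between the internal and external direct sum, which is precisely what motivates viewing $\cZ^G(A)$ as a lift of the ghost centre.
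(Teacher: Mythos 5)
Your argument is correct and is essentially the paper's own: the paper likewise rewrites Equations~\eqref{SAeq1} and~\eqref{SAeq2} as $xy=y_{\chi_0}x$ or $xy=y_{\chi_1}x$ according to the parity of $x$, and then invokes Proposition~\ref{PropphiG} together with Scholium~\ref{DefZGA}(i) to match the four homogeneous pieces. Your closing remark on the external versus internal direct sum is exactly the intended reading of the final assertion.
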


Scholium~\ref{DefZGA}(ii) then yields the following.
\begin{proposition}\label{PropS1}
For $G=\mZ_2$, the ghost centre $\widetilde{\cZ}(A)$ is equal to~$\underline{\cZ}^G(A)$. In particular, as subalgebras of~$A$, we have
$$\underline{\cZ}^G(A)\;=\;s\cZ(A)\,+\,a\cZ(A).$$
\end{proposition}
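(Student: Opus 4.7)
The plan is to derive this from Proposition~\ref{PropS2} together with the definition of $\underline{\cZ}^G(A)$ as the image of the projection $\pi \colon A \otimes \mk^G \tto A$, $\xx \mapsto x^{(0)}$, given in Definition~\ref{DefGcentre}. Since $\pi$ is an algebra morphism and $\cZ^G(A)$ is a subalgebra of $A \otimes \mk^G$, we already know that $\underline{\cZ}^G(A) = \pi(\cZ^G(A))$ is a subalgebra of $A$, so it remains only to identify it with $\widetilde{\cZ}(A) = s\cZ(A) + a\cZ(A)$.

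The first step is to check that the identifications in Proposition~\ref{PropS2} are compatible with $\pi$. By Corollary~\ref{CorGcentrePart} and Equation~\eqref{eqGhatgrad}, an element of $\cZ^G(A)_{g,\chi}$ is an $\xx$ satisfying $\xx^{(k)} = \chi(k)\xx^{(0)}$ for all $k \in G$, with $\xx^{(0)} \in A_g$; in particular $\pi$ simply returns $\xx^{(0)}$. Comparing the defining conditions of the graded pieces in Scholium~\ref{DefZGA}\eqref{DefZGA.1} with Equations~\eqref{SAeq1} and~\eqref{SAeq2}, one verifies that for each of the two $(g,\chi)$-pieces appearing in Proposition~\ref{PropS2}\eqref{PropS2.1}, the element $\xx^{(0)}$ lies in $s\cZ(A)$, and $\xx \mapsto \xx^{(0)}$ is a vector space isomorphism onto the corresponding $A_g$-part of $s\cZ(A)$; analogously for $a\cZ(A)$ using Proposition~\ref{PropS2}\eqref{PropS2.2}.

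Applying $\pi$ to the abstract direct sum decomposition $\cZ^G(A) = s\cZ(A) \oplus a\cZ(A)$ from Proposition~\ref{PropS2} then yields
$$\underline{\cZ}^G(A) \;=\; \pi(\cZ^G(A)) \;=\; \pi(s\cZ(A)) + \pi(a\cZ(A)) \;=\; s\cZ(A) + a\cZ(A) \;=\; \widetilde{\cZ}(A),$$
as required. The argument is essentially bookkeeping: the substantive content is that passage from $\cZ^G(A)$ to $\underline{\cZ}^G(A)$ collapses the abstract direct sum $s\cZ(A) \oplus a\cZ(A)$ into an ordinary sum inside $A$, because two distinct elements of $\cZ^G(A)$ carried by different characters $\chi_0$ and $\chi_1$ may well have the same image $\xx^{(0)} \in A$—precisely when an element of $A$ lies in both the super and anti centres. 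This is the reason the sum is generally not direct inside $A$ and explains the difference between the $G$-centre and the ghost centre highlighted in the introduction.
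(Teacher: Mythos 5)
Your argument is correct and is essentially the paper's own: the paper deduces Proposition~\ref{PropS1} directly from Proposition~\ref{PropS2} and Scholium~\ref{DefZGA}(ii) by applying the projection $\xx\mapsto x^{(0)}$ to the abstract decomposition $\cZ^G(A)=s\cZ(A)\oplus a\cZ(A)$, exactly as you do. Your additional bookkeeping (checking via Corollary~\ref{CorGcentrePart} and Equation~\eqref{eqGhatgrad} that $\pi$ restricts to the identifications of Proposition~\ref{PropS2}) just makes explicit what the paper leaves implicit.
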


We end this subsection with an example in which we demonstrate 
all the above notions for a small $\mZ_2$-graded algebra.

\begin{example}\label{ExDual}{\rm
Consider the algebra $A:=\mk[x]/(x^2)$ of dual numbers. We set $A=\mk\oplus \mk x$ and consider $A$ as a $\mZ_2$-graded algebra with $A_{\oa}=\mk$ and~$A_{\ob}=\mk x$. 
We have $\cZ^G(A)_{\chi_0}=\cZ(A)=A$ and~$\cZ^G(A)_{\chi_1}=A_{\ob}$. Clearly $\underline{\cZ}^G(A)=A$ does not inherit the $\hat{G}$-grading.
It follows that $s\cZ(A)=A$ and~$a\cZ(A)=A_{\ob}$.
}
\end{example}

\subsection{Derived equivalences of superalgebras}\label{superequiv}
For a superalgebra $A$, we set $\Pi_{\oa}=\Id$ as usual, and~$\bpi:=\Pi_{\ob}$. The category $A$-gmod is then a $\bpi$-category in the sense of \cite[Definition~1.6(i)]{supercategory}.

Let $A$ and $B$ be superalgebras. According to \cite[Definition~1.6(ii)]{supercategory}, a {\em $\bpi$-functor} in our setting is a functor $F$ from~$A$-gmod to~$B$-gmod, or their derived categories, with a fixed natural isomorphism $\xi^F:\bpi\circ F\Rightarrow F\circ\bpi$ such that $\xi^F_{\bpi}\circ \bpi(\xi^F)$ equals the identity natural transformation of~$F$, when interpreted using $\bpi^2=\Id$. 
We thus conclude that $F$ is a $\bpi$-functor if and only if $F$ intertwines the $\Pi$-actions as in~\ref{CatComm}.

\begin{theorem}
Let $A,B$ be superalgebras and $F:\cD^b(A)\to\cD^b(B)$ be a strong triangulated $\bpi$-equivalence which admits a strong triangulated $\bpi$-functor as inverse.
Then we have algebra isomorphisms
$$s\cZ(A)\cong s\cZ(B)\quad\mbox{and}\quad s\cZ(A)\oplus a\cZ(A)\cong s\cZ(B)\oplus a\cZ(A).$$
\end{theorem}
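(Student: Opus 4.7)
The plan is to reduce both isomorphisms to a single application of Theorem~\ref{ThmGradEq} for the group $G=\mZ_2$. As was observed in the paragraph preceding the theorem in Subsection~\ref{superequiv}, a $\bpi$-functor between derived supermodule categories is precisely a functor intertwining the strict $\mZ_2$-action $\Pi$ in the sense of~\ref{CatComm}. Consequently the hypotheses on $F$ amount to saying that $F$ is a strong gradable derived equivalence between the $\mZ_2$-graded algebras $A$ and $B$.

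The first step will be to invoke Theorem~\ref{ThmGradEq} in order to obtain an isomorphism
$$\Phi\;:\;\cZ^G(A)\;\stackrel{\sim}{\longrightarrow}\;\cZ^G(B)$$
of $G\times\hat{G}$-graded algebras. Using Proposition~\ref{PropS2} for both $A$ and $B$, we have the abstract vector space decompositions $\cZ^G(A)=s\cZ(A)\oplus a\cZ(A)$ and $\cZ^G(B)=s\cZ(B)\oplus a\cZ(B)$, so the second asserted isomorphism follows immediately from $\Phi$.

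For the first isomorphism, I would verify that the super centre is cut out inside $\cZ^G$ by a choice of $G\times\hat{G}$-bigraded components that is closed under multiplication. By Proposition~\ref{PropS2}\eqref{PropS2.1} one has $s\cZ(A)=\cZ^G(A)_{\oa,\chi_0}\oplus\cZ^G(A)_{\ob,\chi_1}$, and since multiplication respects the $G\times\hat{G}$-grading and $\chi_1^2=\chi_0$ in $\hat{G}$, a direct inspection of the four possible products of pieces from $\{(\oa,\chi_0),(\ob,\chi_1)\}$ shows the result lies again in one of these two components. Hence $s\cZ(A)$ is a subalgebra of $\cZ^G(A)$. Since $\Phi$ sends $\cZ^G(A)_{g,\chi}$ onto $\cZ^G(B)_{g,\chi}$, its restriction provides the desired algebra isomorphism $s\cZ(A)\cong s\cZ(B)$.

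There is no genuine obstacle, as all the substantial work has been carried out upstream in Theorem~\ref{ThmGradEq} and Proposition~\ref{PropS2}; the present argument is essentially a combinatorial bookkeeping step. The only point that deserves a moment of care is the closedness check on the pair of bigraded components defining $s\cZ$, which contrasts with the analogous pair $\{(\oa,\chi_1),(\ob,\chi_0)\}$ for $a\cZ$: that pair fails to be closed under multiplication, in accordance with the well-known fact that the anti centre is generally only a vector subspace of $A$ rather than a subalgebra.
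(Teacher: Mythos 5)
Your proposal is correct and follows essentially the same route as the paper: apply Theorem~\ref{ThmGradEq} to obtain an isomorphism of $G\times\hat{G}$-graded algebras $\cZ^G(A)\cong\cZ^G(B)$ and then read off both isomorphisms from the bigraded decomposition in Proposition~\ref{PropS2}. Your additional check that the pair of components $\{(\oa,\chi_0),(\ob,\chi_1)\}$ is closed under multiplication (while the pair for $a\cZ$ is not) is a correct elaboration of a point the paper leaves implicit, having already recorded that $s\cZ(A)$ is a subalgebra and that products of anti-central elements land in the super centre.
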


\begin{proof}
By Theorem~\ref{ThmGradEq}, we have an equivalence of~$G\times \hat{G}$-graded algebras $\cZ^G(A)\cong\cZ^G(B)$. The conclusions thus follow from Proposition~\ref{PropS2}.
\end{proof}

This implies that, under appropriate derived equivalences of superalgebras, the super centre 
is preserved, as well as the exterior sum of the super and the anti centre. Whether the ghost 
centre is also preserved does not follow from the general theory.

\subsection{Alternative categorical realisations of the supercentre}

\subsubsection{Supernatural transformations}

For a $\mZ_2$-graded algebra $A$, we introduce {\em the supercategory of modules} $\cC\,=A$-smod. 
This $\mk$-linear category has the same objects as $A$-gmod, but larger spaces of homomorphisms. 
For two graded modules $M,N$, the space of morphism $\Hom_{\cC}(M,N)$ in $A$-smod is the 
$\mZ_2$-graded vector space, with $\Hom_{\cC}(M,N)_{\oa}=\hom_A(M,N)$ (the $A$-module 
morphism respecting the grading) and~$\Hom_{\cC}(M,N)_{\ob}$, the elements $f$ of 
$$\Hom_{\mk}(M_{\oa},N_{\ob})\oplus \Hom_{\mk}(M_{\ob},N_{\oa})\subset\Hom_{\mk}(M,N),$$ 
which satisfy $f(av)=(\minus 1)^{\partial a}af(v)$, for homogeneous $a\in A$ and~$v\in M$. 
The category $A$-smod, contrary to~$A$-mod and~$A$-gmod, will  not be abelian in general. 
%Already for~$A=A_{\oa}=\mk$, kernels of epimorphisms in $\mk$-smod are not always {\em graded} vector spaces.

We have
$$\End_{A\minus{\rm{smod}}}(A)\;\cong\; A^{\sop},$$
with $A^{\sop}$ the superalgebra with underlying vector space $A$ and multiplication given by
$$m(a,b)\;=\;(-1)^{\partial a\,\partial b}ba.$$
We, clearly, have
$$s\cZ(A^{\sop})\;\cong\; s\cZ(A)\;\cong\;s\cZ(A)^{\sop}.$$

Following, \cite[Definition~(1.1)]{supercategory}, a {\em supercategory}, resp. {\em superfunctor}, 
is a category, resp. functor, enriched over the category $\vecc^{\mZ_2}_{\mk}$.
The category $A$-smod is an example of a supercategory. We recall the notion of {\em supernatural transformations},
from
\cite[Definition~(1.1)(iii)]{supercategory}. The space $\SNat(F,G)_{\oa}$ is 
spanned by all natural transformations $\eta:F\Rightarrow G$ such that $\eta_M$ is even for each $M\in A$-smod. 
An element of~$\SNat(F,G)_{\ob}$ is a family of odd morphisms 
$\{\eta_M,M\in A\mbox{-smod}\}$ in $A$-smod such that $\eta_N\circ f=(\minus 1)^{\partial f}f\circ\eta_M$, 
for any $f:M\to N$.
 
\begin{proposition}
With $\Id$ the identity functor in $A${\rm -smod}, we have an isomorphism of superalgebras
$$\End(\Id)=\SNat(\Id,\Id)\;\cong\;s\cZ(A).$$
\end{proposition}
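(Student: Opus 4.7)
The plan is to adapt the classical proof of the isomorphism~\eqref{CentreId} to the super setting by evaluating at the left regular module ${}_AA$. For $\eta \in \SNat(\Id, \Id)$ of parity $p$, the component $\eta_A$ is a morphism of parity $p$ in $A$-smod, and the identification $\End_{A\minus{\rm{smod}}}(A) \cong A^{\sop}$ recorded earlier in the excerpt lets us attach to $\eta$ the element $\eta_A(1) \in A_p$. I will show that $\eta \mapsto \eta_A(1)$ defines the desired isomorphism $\Phi : \SNat(\Id,\Id) \stackrel{\sim}{\to} s\cZ(A)$.

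The first step is to show that the image of $\Phi$ lies in $s\cZ(A)$. For every homogeneous $b \in A$, the twisted right multiplication $R_b(a) := (-1)^{\partial a\, \partial b} ab$ is a morphism $A \to A$ of parity $\partial b$ in $A$-smod. Applying the supernaturality axiom $\eta_A \circ R_b = (-1)^{p\,\partial b} R_b \circ \eta_A$ to $1 \in A$ and expanding the Koszul signs reduces precisely to the super-commutation relation $xb = (-1)^{\partial x\, \partial b} bx$ with $x := \eta_A(1)$, which is the defining condition of $s\cZ(A)$. Conversely, for any homogeneous $y \in s\cZ(A)$ I define $\eta^y_M(v) := yv$ for all $v \in M$; the super centre condition on $y$ is exactly what is needed so that each $\eta^y_M$ is a morphism in $A$-smod of parity $\partial y$, and supernaturality of $\eta^y$ at any $f : M \to N$ follows from a direct computation using that $y$ super-commutes with elements of $A$ acting on $f(v)$.

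To check that $\Phi$ and $y \mapsto \eta^y$ are mutually inverse, I compute $\Phi(\eta^y) = y$ directly, and recover $\eta_M(v) = yv$ from $y := \eta_A(1)$ by invoking naturality of $\eta$ at the morphism $\phi_v : A \to M$, $\phi_v(a) := (-1)^{\partial a\,\partial v} av$, which is a morphism of parity $\partial v$ in $A$-smod for homogeneous $v \in M$. Multiplicativity is then immediate: $(\eta^y \circ \eta^z)_M(v) = y(zv) = (yz)v = \eta^{yz}_M(v)$, so $\Phi$ is an algebra homomorphism which by construction preserves parities. The main obstacle is sign bookkeeping: the parity of $\eta_A$ as a morphism in $A$-smod, the Koszul signs in the supernaturality axiom, and the signs in the twisted morphisms $R_b$ and $\phi_v$ must all be managed consistently so that the super centre relation (rather than the ordinary centre) emerges from the calculation.
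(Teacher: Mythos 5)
Your proposal is correct and follows essentially the same route as the paper: evaluate at the left regular module via $\End_{A\minus{\rm{smod}}}(A)\cong A^{\sop}$, apply supernaturality to homogeneous endomorphisms of $A$ (your $R_b$) to land in $s\cZ(A)$, and invert by left multiplication by supercentral elements; your morphisms $\phi_v$ are exactly the maps the paper uses to see that evaluation at $A$ is injective. The extra sign bookkeeping you carry out is consistent and merely makes explicit what the paper leaves as a direct check.
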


\begin{proof}
We consider the ordinary evaluation 
$$\End(\Id)\;\to\; \End_{A\minus{\rm{smod}}}(A)\cong A^{\sop}.$$
Since, for any $M$ in $A$-smod and $v\in M$, there exists $\alpha\in \Hom_{A\minus{\rm{smod}}}(A,M)$ with $v\in \im(\alpha)$, this evaluation is injective.

A homogeneous supernatural transformation $\eta:\Id\Rightarrow\Id$ satisfies
$$\eta_A\circ\alpha\;=\;(-1)^{\partial\alpha\,\partial\eta}\alpha\circ\eta_A,$$
for each homogeneous morphism $\alpha:A\to A$.
We set $a:=\eta_A(1)$. The above equation then implies that $a\in s\cZ(A)$. Every supernatural transformation thus yields an element of the supercentre.

Now we start from a homogeneous $a\in s\cZ(A)$ and define, for each module $M$, morphisms $\eta_M\in\End_{A\minus{\rm{smod}}}(M)$ by
$$\eta_M(v)=av.$$
These form a supernatural transformation, completing the proof.
\end{proof}

\subsubsection{$\bpi$-natural transformations} We return to the category $A$-gmod.

Recall the notion of~$\bpi$-functors on $A$-gmod from Subsection~\ref{superequiv}. 
We follow the convention where $\Id$ and~$\bpi$ are $\bpi$-functors where $\xi^{\Id}$ 
is the identity and~$\xi^{\bpi}$ minus the identity. Following 
\cite[Definition~1.6(iii)]{supercategory}, a $\bpi$-natural transformation between 
two $\bpi$-functors $F$ and~$K$ on~$A$-gmod, is a natural transformation~$\eta:F\Rightarrow K$ 
such that 
$$\eta_{\bpi}\circ\xi^F=\xi^K\circ \bpi(\eta),$$
inside $\Nat(\bpi\circ F,K\circ\bpi)$.
We let $\Nat^{\bpi}$ denote the spaces of~$\bpi$-natural transformations. 
The subspace of~$\End(\Pi)$ given by
$$\Nat^{\bpi}(\Id,\Id)\oplus 
\Nat^{\bpi}(\Id,\bpi)$$
constitutes a subalgebra, which we denote by $\End^{\bpi}(\Pi)$.

\begin{proposition}
We have an isomorphism of superalgebras
$$\End^{\bpi}(\Pi)\;\cong\;s\cZ(A)^{\op}.$$
%In particular, we have 
%$$\End_{\cE^G_A}^{\bpi}(\Id)\cong\cZ^G(A)_{\oa,\chi_0}=\cZ(A)_{\oa}\quad\mbox{ and }\quad\Hom_{\cE^G_A}^{\bpi}(\Id,\bpi)=\cZ^G(A)_{\ob,\chi_1}.$$
\end{proposition}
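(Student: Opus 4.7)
The plan is to match the $\bpi$-natural transformation condition directly with the conditions in Corollary~\ref{CorGcentrePart} that pin down the isotypic components $\cZ^G(A)_{\oa,\chi_0}$ and $\cZ^G(A)_{\ob,\chi_1}$, and then invoke Proposition~\ref{PropS2}\eqref{PropS2.1}.

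First, I would unpack the $\bpi$-natural transformation condition
$$\eta_{\bpi}\circ\xi^F\;=\;\xi^K\circ\bpi(\eta)$$
in the two relevant cases. With the convention $\xi^{\Id}=\id$ and $\xi^{\bpi}=-\id$, this becomes $\eta_{\bpi M}=\bpi(\eta_M)$ for every $M$ when $\eta\in\Nat^{\bpi}(\Id,\Id)$, and $\eta_{\bpi M}=-\bpi(\eta_M)$ for every $M$ when $\eta\in\Nat^{\bpi}(\Id,\bpi)$. Since $\chi_0(\ob)=1$ and $\chi_1(\ob)=-1$ (and the condition at $k=\oa$ is vacuous), Corollary~\ref{CorGcentrePart} then identifies
\begin{align*}
\Nat^{\bpi}(\Id,\Id)\;&\cong\;\cZ^G(A)_{\oa,\chi_0},\\
\Nat^{\bpi}(\Id,\bpi)\;&\cong\;\cZ^G(A)_{\ob,\chi_1},
\end{align*}
as vector spaces. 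Summing and using Proposition~\ref{PropS2}\eqref{PropS2.1}, the subspace $\End^{\bpi}(\Pi)\subset\End(\Pi)$ corresponds to $s\cZ(A)\subset\cZ^G(A)$ under the isomorphism of Theorem~\ref{DescCen}.

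Finally, I would upgrade this vector-space identification to an algebra isomorphism. A short direct check shows that $\cZ^G(A)_{\oa,\chi_0}\oplus\cZ^G(A)_{\ob,\chi_1}$ is closed under the pointwise multiplication in $A\otimes\mk^G$, hence it is a subalgebra of $\cZ^G(A)$, and the identification with $s\cZ(A)$ from Proposition~\ref{PropS2}\eqref{PropS2.1} is an algebra isomorphism. Restricting the isomorphism $\cZ^G(A)^{\op}\cong\End(\Pi)$ of Theorem~\ref{DescCen} to this subalgebra yields the desired $s\cZ(A)^{\op}\cong\End^{\bpi}(\Pi)$.

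The one point requiring actual verification is that the multiplication $\eta\otimes\mu\mapsto\Pi_k(\eta)\circ\mu$ of $\End(\Pi)$ preserves $\bpi$-naturality, so that $\End^{\bpi}(\Pi)$ really is a subalgebra; this follows by a direct diagram chase using $\xi^{\bpi}=-\id$, or alternatively is transported from the subalgebra property of $s\cZ(A)\subset A$ via the correspondence just established. I do not anticipate any serious obstacle, as all the heavy lifting has already been performed in Corollary~\ref{CorGcentrePart} and Proposition~\ref{PropS2}.
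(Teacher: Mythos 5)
Your proposal is correct and follows essentially the same route as the paper: the paper's proof also identifies $\End^{\bpi}(\Pi)$ with $\End(\Pi)_{\oa,\chi_0}\oplus\End(\Pi)_{\ob,\chi_1}$, applies Theorem~\ref{DescCen} together with Corollary~\ref{CorGcentrePart}, and concludes via Proposition~\ref{PropS2}(i). You merely spell out the details the paper leaves implicit (the unpacking of the $\bpi$-naturality condition with $\xi^{\Id}=\id$, $\xi^{\bpi}=-\id$, and the closure of the relevant components under multiplication), all of which check out.
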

\begin{proof}
As an immediate consequence of Theorem~\ref{DescCen} and Corollary~\ref{CorGcentrePart}, we have
$$\End^{\bpi}(\Pi)\;=\;\End(\Pi)_{\oa,\chi_0}\,\oplus\, \End(\Pi)_{\ob,\chi_1}\;\cong\; \cZ^G(A)_{\oa,\chi_0}^{\op}\,\oplus\, \cZ^G(A)_{\ob,\chi_1}^{\op}.$$
The result then follows from Proposition~\ref{PropS2}(i).
\end{proof}

%%%%%%%%%%%%%%%%%%%%%%%%%%%%%%%%%%%%%%%%%%%%%%%%%%%%%%%%%%%%%%%%%%%%%%%%%%%%%%

\section{$G$-Hochschild cohomology speculations}\label{SecHoch}

By Theorems~\ref{ThmZphiFunctor} and~\ref{DescCen}, it is natural to introduce the following spaces for an algebra $A$ with an $H$-action, respectively a $G$-grading:
\begin{itemize}
\item $\Ext^\bullet(\Phi):=\bigoplus_{i,h}\Ext^i(\Id,\Phi_h)$;
\item $\Ext^{\bullet}(\Pi):=\bigoplus_{i,g}\Ext^i(\Id,\Pi_g)$;
\end{itemize}
where the first extension groups are taken in the category $\Fun(A\mbox{{\rm -mod}})$ and the second in $\Fun(A\mbox{{\rm -gmod}})$.
These can be interpreted as generalisations of Hochschild cohomology, see e.g.~\cite[Chapter~7]{He}. The spaces can again be given the structure of algebras, using the approach of~\ref{StrictSec} and the Yoneda product. 

Based on Proposition~\ref{PropphiG} and Theorems~\ref{ThmDerEqPhi} and~\ref{ThmGradEq} and \cite[Proposition~2.5]{Derived2}, we arrive at the following natural questions:
\begin{enumerate}
\item Consider a $G$-graded algebra $A$ with the associated $\hat{G}$-action~$\phi$ and assume that $|G|$ is finite and not divisible by $\charr(\mk)$. Do we have  an isomorphism $\Ext^\bullet(\Phi)\cong\Ext^{\bullet}(\Pi)$?
\item For two algebras $A$ and $B$ with $H$-actions $\phi$ and $\omega$ and a (strong) equivalence of triangulated categories $\cD^b(A)\to\cD^b(B)$ intertwining $\Phi$ and $\Omega$, do we have $\Ext^\bullet(\Phi)\cong\Ext^\bullet(\Omega)$?
\item If two $G$-graded algebras $A$ and $B$ are (strongly) gradable derived equivalent, do we have $\Ext^{\bullet}(\Pi^{(A)})\cong \Ext^{\bullet}(\Pi^{(B)})$?
\end{enumerate}

\appendix

\section{Proofs of Section~\ref{GroupAc}}\label{AppGroup}

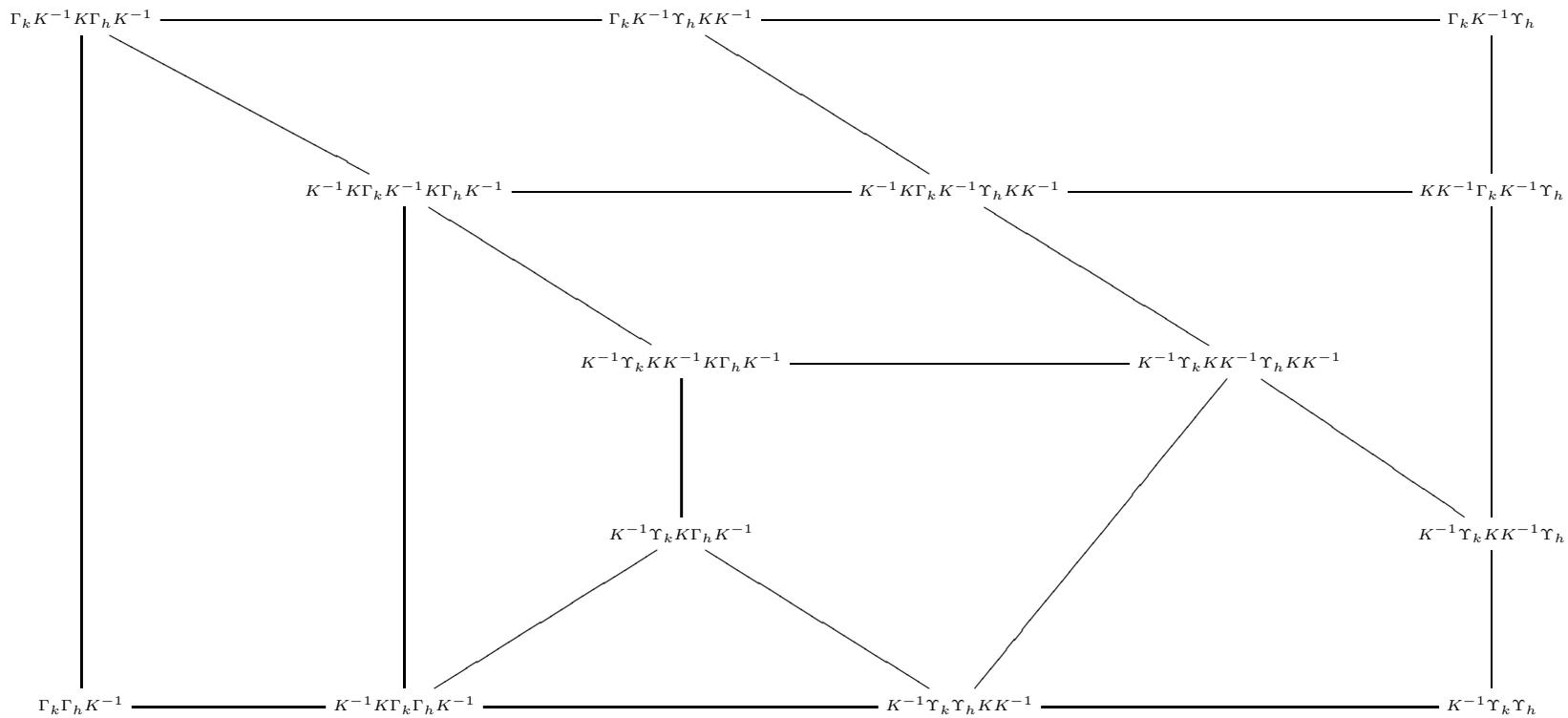
\begin{sidewaysfigure}
{\tiny
\begin{displaymath}
\xymatrix{ 
\\ \\ \\ \\ \\ \\ \\ \\ \\ \\ \\
\Gamma_kK^{-1}K\Gamma_hK^{-1}\ar@{-}[dddddddd]\ar@{-}[rrr]\ar@{-}[ddrr]%\ar@{-}@/_4pc/[ddddrrr]
&&&\Gamma_kK^{-1}\Upsilon_hKK^{-1}\ar@{-}[rrr]&&& \Gamma_kK^{-1}\Upsilon_h\ar@{-}[dd]\\ \\
&& K^{-1}K\Gamma_kK^{-1}K\Gamma_hK^{-1}&&K^{-1}K\Gamma_kK^{-1}\Upsilon_hKK^{-1}\ar@{-}[rr]\ar@{-}[ll]\ar@{-}[uul]&&
KK^{-1}\Gamma_kK^{-1}\Upsilon_h\ar@{-}[dddd] \\ \\
&&&K^{-1}\Upsilon_kKK^{-1}K\Gamma_hK^{-1}\ar@{-}[luu]\ar@{-}[rr]&&
K^{-1}\Upsilon_kKK^{-1}\Upsilon_hKK^{-1}\ar@{-}[uul]&\\ \\
&&&K^{-1}\Upsilon_kK\Gamma_hK^{-1}\ar@{-}[uu]&&&
K^{-1}\Upsilon_kKK^{-1}\Upsilon_h\ar@{-}[dd]\ar@{-}[luu] \\ \\
\Gamma_k\Gamma_hK^{-1}\ar@{-}[rr]&&
K^{-1}K\Gamma_k\Gamma_hK^{-1}\ar@{-}[rr]\ar@{-}[ruu]\ar@{-}[uuuuuu]&&
K^{-1}\Upsilon_k\Upsilon_hKK^{-1}\ar@{-}[rr]\ar@{-}[luu]\ar@{-}[ruuuu]&&K^{-1}\Upsilon_k\Upsilon_h
}
\end{displaymath}
}
\caption{Commutative diagram in the proof of Proposition~\ref{InterInv}}\label{fig1}
\end{sidewaysfigure}

\begin{proof}[Proof of Proposition~\ref{InterInv}]
To prove this, consider the diagram given in Figure~\ref{fig1}. All edges of this diagram correspond to the
obvious pair of mutually inverse isomorphisms (given by using horizontal pre- and post-composition of
$\alpha$, $\beta$ or $\xi$ with necessary identity morphisms). Note that the vertical edge in the middle 
of the diagram is induced from either $\alpha$ or $\beta$, where equality of both options follows from the counit-unit adjunction formula $K(\alpha)\circ\beta_K=1_K$.

 The bottom triangle commutes because of 
commutativity of \eqref{eqCommFun2}. To check commutativity of all rectangles one uses 
associativity of horizontal composition and interchange law. This implies that 
the whole diagram commutes and establishes our claim.
\end{proof}

\begin{proof}[Proof of Proposition~\ref{PropCDEq}]
Let $\alpha$ denote an isomorphism of functors $F\circ F^{\minus 1}\stackrel{\sim}{\Rightarrow}\Id_{\cD}$. Using the notation of \ref{CatComm}, we have isomorphisms of functors
$$\delta_k=\Upsilon_k(\alpha)\circ \xi^k_{F^{\minus 1}}\;\,:\;\, F\circ\Gamma_k\circ F^{\minus 1}\;\stackrel{\sim}{\Rightarrow}\;\Upsilon_k.$$
We have the corresponding isomorphism vector spaces
$$\beta:\End(\Gamma)\to\End(\Upsilon),\quad\Nat(\Id,\Gamma_h)\ni\eta\mapsto\delta_h\circ F(\eta)_{F^{\minus 1}}\circ\alpha^{\minus 1}.$$
Now consider $\sigma\in \Nat(\Id,\Gamma_k)$.
Equation~\eqref{eqCommFun} implies that
$$\beta(\Gamma_k(\eta)\circ\sigma)\;=\;\Upsilon_{kh}(\alpha)\circ \left(\Upsilon_k(\xi^h)\circ\xi^{k}_{\Gamma_h}\circ F\Gamma_k(\eta)\circ F(\sigma)\right)_{F^{\minus1}}\circ\alpha^{\minus1}.$$
On the other hand, we have
$$\Upsilon_k(\beta(\eta))\circ\beta(\sigma)\;=\; \Upsilon_k\left(\Upsilon_h(\alpha)\circ (\xi^h\circ F(\eta))_{F^{\minus 1}}\circ\alpha^{\minus 1})\right)\circ \Upsilon_k(\alpha)\circ ( \xi^k\circ F(\sigma))_{F^{\minus 1}}\circ\alpha^{\minus 1}.$$
Using the definition of~$\xi^g$ shows that the above two expression agree, which shows that $\beta$ is an algebra isomorphism.
\end{proof}

\section{Evaluation on tilting modules vs tilting complexes}\label{AppB}

Consider a finite dimensional $A\in \Alg$. Recall from \cite[Section~6]{Derived1} that a {\em tilting complex} $T_\bullet$ in $\cD^b(A)$ is an object 
in $\cD^b(A)$ such that
\begin{itemize}
\item $\Hom_{\cD^b(A)}(T_\bullet,T_\bullet[j])=0,\qquad\mbox{ for all }\quad j\not=0$;
\item $\add(T_\bullet)$ generates $\cD^b(A)$ as a triangulated category.
\end{itemize}
Clearly, the image $F({}_AA)$ for any derived equivalence $F:\cD^B(A)\to\cD^b(B)$ is a tilting complex in $\cD^b(B)$. 

By definition, a {\em tilting module} is a tilting complex contained in one position. It follows by definition that, for a tilting module $T$ in $A$-mod and an arbitrary module~$M$ in $A$-mod, there exists a bounded complex
$$\cdots\to X_{-1}\to X_0\to X_1\to \cdots,$$
with $X_i\in \add(T)$ and such that the homologies $H_i(X_\bullet)$ are zero when $i\not=0$ and isomorphic to~$M$ when $i=0$.

\subsection{Faithful evaluation on tilting modules}

\begin{lemma}\label{LemEvT}
Let $T$ be a tilting module in $A${\rm-mod}.
\begin{enumerate}[(i)]
\item Let $F^1,F^2$ be exact endofunctors on $A${\rm -mod}, with $\eta\in\Nat(F^1,F^2)$. If $\eta_T=0$, then $\eta=0$.
\item Assume that $A$ is $G$-graded and $T$ admits a graded lift, which we denote by $T$ again. Let $F^1,F^2$ be exact endofunctors on $A${\rm -gmod}, with $\eta\in\Nat(F^1,F^2)$. If $\eta_{\Pi_gT}=0$, for all $g\in G$, then $\eta=0$.
\end{enumerate}
\end{lemma}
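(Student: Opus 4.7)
\emph{Plan of proof.} For part~(i), the strategy is to bootstrap the hypothesised vanishing at $T$ to all modules via the bounded $\add(T)$-complex recalled just before the statement. First, one observes that $\eta_T=0$ forces $\eta_S=0$ for every $S\in\add(T)$: writing $S$ as a summand of some $T^n$ via $i\colon S\hookrightarrow T^n$ and $p\colon T^n\tto S$ with $p\circ i=\id_S$, naturality of $\eta$ together with additivity gives $F^2(i)\circ\eta_S=\eta_{T^n}\circ F^1(i)=0$, so $\eta_S=F^2(p)\circ F^2(i)\circ\eta_S=0$.

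Second, for an arbitrary $M\in A$-mod, choose the bounded complex $X_\bullet$ recalled before the lemma, with $X_i\in\add(T)$, $H_0(X_\bullet)\cong M$, and $H_i(X_\bullet)=0$ for $i\neq 0$. Since $\eta$ vanishes on each $X_i$ by the first step, the induced chain map $\eta_{X_\bullet}\colon F^1(X_\bullet)\to F^2(X_\bullet)$ is identically zero.

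Third, the exact endofunctors $F^1,F^2$ induce triangulated endofunctors $F^1_\bullet,F^2_\bullet$ on $\cD^b(A)$, and $\eta$ lifts to a natural transformation $\eta_\bullet\colon F^1_\bullet\Rightarrow F^2_\bullet$ acting componentwise (as already exploited in Section~\ref{SecExtCen}). Because $X_\bullet$ has $M$ as its only nonzero homology there is an isomorphism $\phi\colon X_\bullet\to M[0]$ in $\cD^b(A)$. Naturality of $\eta_\bullet$ applied to $\phi$, together with $(\eta_\bullet)_{X_\bullet}=0$, gives $(\eta_\bullet)_{M[0]}\circ F^1_\bullet(\phi)=0$, and hence $(\eta_\bullet)_{M[0]}=0$ since $F^1_\bullet(\phi)$ is invertible. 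Finally, $\Hom_{\cD^b(A)}(F^1M,F^2M)=\Hom_A(F^1M,F^2M)$, so this is exactly $\eta_M=0$.

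Part~(ii) is proved by transcribing the above argument into the graded setting, using that every graded module $M\in A$-gmod admits a bounded complex with terms in $\add\{\Pi_gT\mid g\in G\}$ whose only nonzero homology is $M$ in degree zero, a fact obtained by lifting the ungraded $\add(T)$-complex along the graded lift of $T$. The hypothesis $\eta_{\Pi_gT}=0$ for every $g\in G$ propagates through the Step~1 argument to every object of $\add\{\Pi_gT\mid g\in G\}$, after which Steps~2 and~3 apply verbatim inside $\cD^b(A\text{-gmod})$. The one place where something more than routine verification is required is the existence of the graded lift of the $\add(T)$-complex; I expect that to be the main (though still standard) obstacle, the remainder of the argument being formal.
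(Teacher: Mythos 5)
Your argument is correct, but it takes a genuinely different and somewhat heavier route than the paper. The paper derives both parts from a single elementary principle (Lemma~\ref{LemAbstract}): a natural transformation between exact endofunctors of an abelian category vanishes as soon as it vanishes on a set $S$ of objects such that every object is a \emph{subquotient} of a finite direct sum of objects of $S$ (exactness makes $F^2$ preserve monos and $F^1$ preserve epis, so vanishing propagates to subobjects and quotients). For a tilting module this applies because $M\cong H_0(X_\bullet)$ is already a subquotient of the single term $X_0\in\add(T)$ — the rest of the complex is never used. You instead pass to $\cD^b(A)$ and exploit the quasi-isomorphism $X_\bullet\to M[0]$; this is valid (the componentwise transformation does descend to a natural transformation on $\cD^b(A)$, natural with respect to all derived-category morphisms, precisely because $F^1,F^2$ are exact — the paper itself uses this in Definition~\ref{defzeta}), and Appendix~\ref{AppB2} shows why restricting to tilting \emph{modules} rather than complexes is essential in either approach. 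What the paper's route buys is that it is entirely elementary and, more importantly, that it trivialises part~(ii): one only needs every graded module to be a subquotient of a finite direct sum of $\Pi_gT$'s, which follows by decomposing the ungraded embedding $A\hookrightarrow T_0$ into its graded components under $\Hom_A(A,T)=\bigoplus_g\hom_A(A,\Pi_gT)$. Your route, by contrast, requires lifting the entire $\add(T)$-complex to a graded complex with terms in $\add\{\Pi_gT\mid g\in G\}$ — exactly the step you flag as the main obstacle and leave unproved. That gap is real as your proof stands, but it is an artefact of the derived-category detour rather than of the statement: replacing Steps~2--3 by the subquotient observation removes it entirely.
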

Both claims are special cases of the following obvious general principle.
\begin{lemma}\label{LemAbstract}
Let $F^1,F^2$ be exact endofunctors of an abelian category $\cC$, with $\eta\in\Nat(F^1,F^2)$. Assume that $\cC$ has a set $S$ of objects such that any object in $\cC$ is a subquotient of a finite direct sum of objects in $S$. Then $\eta=0$ if and only if $\eta_X=0$, for all $X\in S$.
\end{lemma}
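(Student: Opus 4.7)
My plan is to prove the nontrivial \emph{if} direction by reducing the vanishing of $\eta_Y$ for an arbitrary $Y \in \cC$ to the vanishing on objects of $S$, via three successive reductions: finite direct sums, subobjects, and quotients.

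The first reduction is purely additive: since both $F^1$ and $F^2$ are exact, hence additive, the natural transformation $\eta$ is compatible with finite biproducts, so $\eta_X = 0$ for any $X = X_1 \oplus \cdots \oplus X_n$ with $X_i \in S$. The remaining two reductions use exactness in an essential way. Given a monomorphism $\iota : Z \hookrightarrow X$ with $\eta_X = 0$, the naturality square for $\iota$ gives $F^2(\iota) \circ \eta_Z = \eta_X \circ F^1(\iota) = 0$; since $F^2$ is exact, $F^2(\iota)$ is a monomorphism, forcing $\eta_Z = 0$. Dually, for an epimorphism $\pi : Z \twoheadrightarrow Y$ with $\eta_Z = 0$, naturality yields $\eta_Y \circ F^1(\pi) = F^2(\pi) \circ \eta_Z = 0$, and exactness of $F^1$ makes $F^1(\pi)$ an epimorphism, so $\eta_Y = 0$.

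Combining these, any $Y \in \cC$ is of the form $Z/W$ for some subobject $Z$ of a finite direct sum $X = X_1 \oplus \cdots \oplus X_n$ with $X_i \in S$. The additive reduction gives $\eta_X = 0$; applying the mono-step to $Z \hookrightarrow X$ gives $\eta_Z = 0$; and applying the epi-step to $Z \twoheadrightarrow Y$ gives the desired $\eta_Y = 0$.

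I do not anticipate any real obstacle: the argument is a routine diagram chase on the naturality square. The only point needing care is the asymmetric use of exactness, namely $F^2$ preserving monomorphisms for the subobject step and $F^1$ preserving epimorphisms for the quotient step; once these two observations are in place, the conclusion is immediate and the two parts of Lemma~\ref{LemEvT} both follow by taking $S$ to be $\{T\}$, respectively $\{\Pi_g T \mid g \in G\}$, and invoking the standard fact that every module is a subquotient of a finite direct sum of copies of (shifted) tilting modules.
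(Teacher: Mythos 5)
Your argument is correct and is exactly the routine diagram chase the paper has in mind: the authors state Lemma~\ref{LemAbstract} without proof, calling it an ``obvious general principle,'' and your three-step reduction (biproducts by additivity, subobjects via $F^2$ preserving monomorphisms, quotients via $F^1$ preserving epimorphisms) is the standard way to fill in that omission. The application to Lemma~\ref{LemEvT} with $S=\{T\}$, respectively $S=\{\Pi_g T\mid g\in G\}$, also matches the paper.
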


\subsection{Non-faithful evaluation on tilting complexes}\label{AppB2}
We give an example which shows that Lemma~\ref{LemEvT}(i) does not naturally extend to tilting complexes.

Let $A$ be the hereditary path algebra of the quiver
$$\xymatrix{ 1\ar[r]^a& 2\ar[r]^b&3}.$$
We denote the identity path at $i$ by $e_i$. For a vertex $i$, we denote by $L_i$
the corresponding simple $A$-module, by $P_i$ the projective cover of $L_i$, and by
$I_i$ the injective envelope of $L_i$.

Consider the complex $C_\bullet$ given by
$$0\to P_2\to I_2\to 0,$$
where  $P_2$ is in position zero and the middle morphism is not zero.
It is easily checked that $T_\bullet:= P_3\oplus P_2\oplus C_\bullet$ is a tilting complex.

Now we consider the bimodules
$$X_1=Ae_3\otimes_{\mk} e_1A\quad\mbox{and}\quad X_2=Ae_1\otimes_{\mk}e_3A$$
and corresponding exact functors
$$F^1=X_1\otimes_A-\quad\mbox{and}\quad F^2=X_2\otimes_A-$$
on $A$-mod.
We have a natural transformation $\eta:F^1\Rightarrow F^2$ corresponding to the morphism $X_1\to X_2$, which maps the simple bimodule $X_1$ to the socle of $X_2$, this is the morphism
$$X_1\to X_2,\quad e_3\otimes e_1\mapsto ba\otimes ba.$$

Observe that, for any $A$-module $M$, we have $F^1M=0$ unless $[M:L_1]\not=0$ and $F^2M=0$ unless $[M:L_3]\not=0$. It thus follows easily that $\eta_M=0$ unless $M=P_1$. Since $P_1$ does not appear inside $T_\bullet$, it follows that $\eta_{T_\bullet}=0$, for $\eta\in\Nat(F^1_\bullet,F^2_\bullet)$ induced from the natural transformation $F^1\Rightarrow F^2$ considered above.

Hence, the composition
$$\Nat(F^1,F^2)\;\to\;\Nat(F^1_\bullet,F^2_\bullet)\;\to\; \Hom_{\cD^b(A)}(F^1_\bullet T_\bullet,F^2_\bullet T_\bullet),$$
is not injective.

\subsection*{Acknowledgement}
K.C. is supported by Australian Research Council Discover-Project Grant DP140103239. 
V.M. is supported by the Swedish Research Council and the G{\"o}ran Gustafssons Foundation.
We thank Maria Gorelik for discussions which motivated this paper and Martin Herschend for discussions leading to the example in Appendix~\ref{AppB2}.

\vspace{2mm}

\noindent
KC: School of Mathematics and Statistics, University of Sydney, NSW 2006, Australia;
E-mail: {\tt kevin.coulembier@sydney.edu.au} 
\vspace{2mm}

\noindent
VM: Department of Mathematics, University of Uppsala, Box 480, SE-75106, Uppsala, Sweden;
E-mail: {\tt  mazor@math.uu.se}
\date{}


\begin{thebibliography}{99999} 

%\bibitem[Ap]{Ap}
%T.M.~Apostol.
%Introduction to analytic number theory. 
%Undergraduate Texts in Mathematics. Springer-Verlag, New York-Heidelberg, 1976.

\bibitem[BE]{supercategory}
J. Brundan, A.P. Ellis.
Monoidal supercategories. Comm. Math. Phys. {\bf351} (2017), 1045--1089.

\bibitem[CoM]{CM4}
K.~Coulembier, V.~Mazorchuk.
Dualities and derived equivalences for parabolic category~$\cO$. Israel J. Math. {\bf219} (2017), no. 2, 661--706.




\bibitem[Go]{Gorelik}
M.~Gorelik.
On the ghost centre of Lie superalgebras. 
Ann. Inst. Fourier (Grenoble) {\bf 50} (2000), no. 6, 1745--1764.


\bibitem[He]{He} R.~Hermann. Monoidal categories and the Gerstenhaber bracket in Hochschild cohomology. Mem. Amer. Math. Soc. {\bf 243} (2016), no. 1151, 146 pp.




\bibitem[Ri1]{Derived1}
J.~Rickard.
Morita theory for derived categories. 
J. London Math. Soc. (2) {\bf 39} (1989), no. 3, 436--456.

\bibitem[Ri2]{Derived2}
J.~Rickard.
Derived equivalences as derived functors. 
J. London Math. Soc. (2) {\bf 43} (1991), no. 1, 37--48.


\bibitem[Ro]{Rotman}
J.J.~Rotman: Advanced modern algebra. Graduate Studies in Mathematics, {\bf114}. American Mathematical Society, Providence, RI, 2010. 


\end{thebibliography}
\end{document}